%

\documentclass[pdflatex,sn-mathphys-num]{sn-jnl}

\usepackage{graphicx}%
\usepackage{multirow}%
\usepackage{amsmath,amssymb,amsfonts}%
\usepackage{amsthm}%
\usepackage{mathrsfs}%
\usepackage[title]{appendix}%
\usepackage{xcolor}%
\usepackage{textcomp}%
\usepackage{manyfoot}%
\usepackage{booktabs}%
\usepackage{algorithm}%
\usepackage{algorithmicx}%
\usepackage{algpseudocode}%
\usepackage{listings}%


\usepackage{subcaption} 
\usepackage{dsfont} 
\usepackage{stmaryrd} 
\usepackage{colortbl} 
\usepackage{array} 
\usepackage{placeins} 
\usepackage{anyfontsize} 

\theoremstyle{thmstyleone}%
\newtheorem{theorem}{Theorem}
\newtheorem{lem}{Lemma}
\newtheorem{prop}{Proposition}
\newtheorem{proposition}[theorem]{Proposition}%

\theoremstyle{thmstyletwo}%
\newtheorem{remark}{Remark}%
\newtheorem{rem}{Remark}

\theoremstyle{thmstylethree}%
%
\newtheorem{defn}{Definition}

\raggedbottom


\newcommand{\rmath}{\mathbb{R}}
\newcommand{\nmath}{\mathbb{N}}
\newcommand{\zmath}{\mathbb{Z}}

\newcommand{\hcal}{\mathcal{H}}
\newcommand{\rcal}{\mathcal{R}}
\newcommand{\gcal}{\mathcal{G}}
\newcommand{\pcal}{\mathcal{P}}
\newcommand{\ccal}{\mathcal{C}}

\newcommand{\bE}{\mathbf{E}}
\newcommand{\bP}{\mathbf{P}}

\newcommand{\rtwo}{\mathbb{R}^{2}}
\newcommand{\un}[1]{\mathds{1}_{\{#1\}}}
\newcommand{\esp}{\mathbb{E}}

\begin{document}

\title[Effect of stochasticity on the growth of the $\infty$-parent SLFV process]{Effect of stochasticity on the growth of the $\infty$-parent SLFV process}


\author[1,2]{\fnm{Jan Lukas} \sur{Igelbrink}}\email{igelbrin@math.uni-frankfurt.de}
\equalcont{These authors contributed equally to this work.}

\author*[3,4]{\fnm{Apolline} \sur{LOUVET}}\email{apolline.louvet@inrae.fr}
\equalcont{These authors contributed equally to this work.}

\affil[1]{\orgdiv{Institut f{\"u}r Mathematik}, \orgname{Johannes Gutenberg-Universit{\"a}t}, \orgaddress{\street{Staudingerweg 9}, \city{Mainz}, \postcode{55128}, \country{Germany}}}

\affil[2]{\orgname{Goethe-Universit{\"a}t}, \orgaddress{\city{Frankfurt}, \country{Germany}}}

\affil*[3]{\orgdiv{BioSP}, \orgname{INRAE}, \orgaddress{\street{228 route de l'a{\'e}rodrome}, \city{Avignon}, \postcode{84914}, \country{France}}}

\affil[4]{\orgdiv{Department of Life
Science Systems, School of Life Sciences,}, \orgname{Technical University of Munich}, \orgaddress{\street{Liesel-Beckmann Strasse 2}, \city{Freising}, \postcode{85354}, \country{Germany}}}

\abstract{We explore the impact of different forms of stochasticity on the expansion dynamics of a stochastic growth model called the $\infty$-parent spatial $\Lambda$-Fleming Viot process. This process belongs to a family of population genetics processes in a spatial continuum, and was recently introduced to study the evolution of genetic diversity in spatially expanding populations. Its stochastic reproduction dynamics gives rise to a rich growth structure, on which first theoretical results were obtained. 
In this paper, we further explore this growth dynamics using two complementary approaches: an analytical study of a simplified model for growth at the front edge, and a simulation-based study. 

We show that the observed expansion speed is the result of the interplay of stochasticity in shapes, timings and locations of reproduction events, each form of stochasticity being necessary but not sufficient to explain the expansion dynamics. We also identify distinctive scaling regimes for the variance of hitting times by the front and the bulk of the expansion. Moreover, we obtain results on the scaling of the front fluctuations, which point towards the front interface belonging to the KPZ universality class. }

\keywords{stochastic growth models, expansion dynamics, spatial Lambda-Fleming-Viot processes, percolation, front fluctuations, simulation study}

\pacs[MSC Classification]{Primary 60K35; secondary 60F05}

\maketitle

\section{Introduction}\label{sec1}
The $\infty$-parent spatial $\Lambda$-Fleming Viot process, or $\infty$-parent SLFV process for short, is a stochastic growth model in~$\mathbb{R}^{d}$, $d \geq 1$ initially introduced in~\cite{louvetESAIM} to study genetic diversity in spatially expanding populations. It was quickly identified that this process with roots in population genetics exhibits a very rich growth dynamics due to its different sources of stochasticity in reproduction, which is the main focus of this article, expanding upon previous theoretical works on this topic \cite{louvetSPA,louvetArxiv}. 

The main feature of the $\infty$-parent SLFV process is its "event-based" reproduction dynamics driven by a space-time Poisson point process of \textit{reproduction events}, allowing to control local reproduction rates in a straightforward way: 
any given geographical area is intersected by reproduction events at a constant rate that only depends on the surface and shape of the area. Whenever a reproduction event intersects an occupied area, reproduction occurs and the whole affected area is filled with new individuals. Conversely, nothing happens when a reproduction event falls into an empty area. 

As a result of this "event-based" reproduction dynamics, we can identify three different sources of randomness in the microscopic dynamics: stochasticity in \textit{timings}, \textit{locations} and \textit{shape parameters} of reproduction events. In this article, our aim is to disentangle the effect of these three distinct sources of randomness on the macroscopic expansion dynamics. To do so, we present a collection of numerical simulation studies (with and without stochasticity in shape parameters) along with a toy model for growth at the edge of an expanding population. Our main finding is that all three sources of stochasticity need to be accounted for in combination in order to study the macroscopic expansion dynamics. 
We also study the distributions of the times at which the front edge and bulk of the expansion (\textit{i.e.}, the area of maximal density) reach a location at distance~$x$ from the area initially occupied, in line with previous theoretical works on the $\infty$-parent SLFV process \cite{louvetSPA,louvetArxiv}. We identify scaling exponents for the variance of these two hitting times, which are generally different and exhibit a strong dependence on the distribution of shapes of reproduction events. 

Another question of importance for stochastic growth models, which has been widely studied in the literature for similar processes arising from percolation theory (see \textit{e.g.}, \cite{huergo2010morphology,huergo2012growth,takeuchi2018appetizer}), is the scaling of the spatial transverse fluctuations of the front edge. By recording the front profile at regular time steps, we find that we can clearly distinguish two different scaling regimes. 
The first one, corresponding to the early dynamics of the process, when some regions of the border of the area initially occupied are still unaffected by reproduction events, exhibits fluctuations growing as~$\propto t^{\beta_{1}}$ with $\beta_{1} \approx 0.61-0.63$, while the second one exhibits fluctuations growing as~$\propto t^{\beta_{2}}$ with~$\beta_{2}$ around~$1/3$. 

The rest of the paper is structured as follows:
\begin{itemize}
    \item In the remaining part of the introduction we give a short review of the history of the model and its broader context. 
    \item In Section~\ref{sec:defn_process}, we define the $\infty$-parent SLFV process, recall what theoretical results were previously obtained on the growth dynamics of the $\infty$-parent SLFV process (in \cite{louvetESAIM,louvetSPA,louvetArxiv}), and explore what would be the expansion speed of several deterministic or semi-deterministic versions of this process. In the remaining sections, we will use these results as a baseline to assess the effect of different forms of stochasticity on the expansion dynamics. 
    \item In Section~\ref{sec:toy_model}, we study the effect of stochasticity in the timings of reproduction events by means of a simplified model for growth at the front edge,  called the \textit{two-columns growth process} (or $2$-CGP). We show that even in this simplified setting with limited spatial structure, stochasticity in reproduction already leads to an increase of the expansion speed by almost $50 \%$ compared to the deterministic case. 
    \item Sections~\ref{sec:num_scheme} and~\ref{sec:results_simus} focus on a simulation-based study of the growth properties of the $\infty$-parent SLFV. In Section~\ref{sec:num_scheme}, we describe the numerical scheme used to simulate the $\infty$-parent SLFV process as part of this study. In Section~\ref{sec:results_simus}, we focus on the analysis of the numerical simulations. In Section~\ref{subsec:expansion_speed}, we study the expansion speed of the occupied area, and how it is affected by stochasticity in shapes of reproduction events. In Section~\ref{subsec:varsigmatau}, we investigate the scaling behaviour of the hitting time of a location by the front edge or the bulk of the expansion (\textit{i.e.}, the area fully occupied) as a function of its distance from the area initially occupied. In Section~\ref{subsec:transversefrontfluc}, we focus on the transverse fluctuations of the front interface, and explore their scaling properties. We conclude with a general discussion of our results.  
\end{itemize}

\begin{figure}
    \centering
    \begin{subfigure}[t]{0.25\textwidth}
\centering
\includegraphics[width=0.95\linewidth]{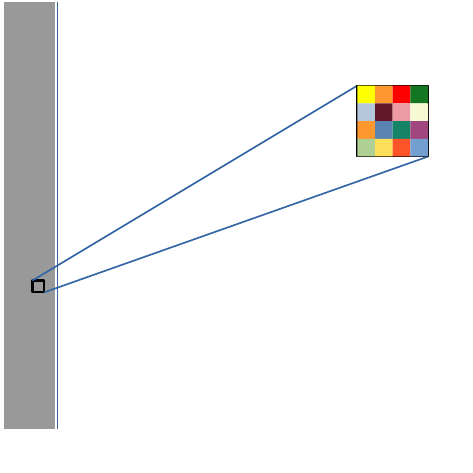}
\subcaption{Initial condition}
\end{subfigure}
\begin{subfigure}[t]{0.70\textwidth}
\centering
\includegraphics[width=0.65\linewidth]{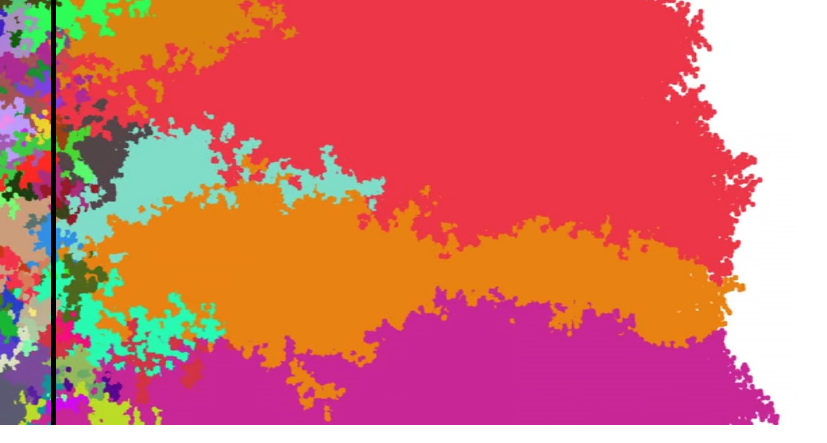}
\subcaption{Emergence of sectors}
\end{subfigure}
    \caption{Illustration of the phenomenon of emergence of sectors at the front of the occupied area in the $\infty$-parent SLFV process. Each colour represents a different neutral genetic type. (a) Initially, only the area in grey is occupied. While the area appears as homogeneous in type, each pixel in the occupied area contains a different genetic type, and has its own unique colour. (b) Snapshot of the spatial repartition of genetic types in the $\infty$-parent SLFV process after the expansion has started. The area which was initially occupied is the one on the left of the vertical black line. We can observe the presence of macroscopic sectors homogeneous in type at the front edge.}
    \label{fig:sectors}
\end{figure}

We now want to give a short review of the history of the model and of the literature on stochastic growth models. 
The $\infty$-parent SLFV process was initially introduced as a potential candidate to study the spontaneous emergence of a sectorisation at the front of spatially expanding populations, which has been documented by biological experiments \cite{hallatschek2007genetic,hallatschek2010life} and studied in dimension~$1$ in \cite{durrett2016genealogies}. 
As illustrated on Figure~\ref{fig:sectors}, simulations show that this sectorisation phenomenon also occurs in the $\infty$-parent SLFV process, and that it is expected to be a consequence of the microscopic reproduction dynamics at the front edge. However, as it is generally the case for stochastic growth models, it is an open question to derive the macroscopic observed growth dynamics of the $\infty$-parent SLFV process (expansion speed, emergence of sectors,...) from the microscopic reproduction dynamics described earlier. First theoretical results were obtained in \cite{louvetSPA,louvetArxiv}, showing that the front edge advances linearly in time, and that the front depth (\textit{i.e.}, the width of the area with intermediate population densities) grows sub-linearly in time. 
A lower bound on the expansion speed was also obtained in \cite{louvetSPA}, based on simple first-moment computations. While this lower bound was initially expected to be fairly close to the actual expansion speed, numerical simulations showed that in the case considered in \cite{louvetSPA}, the actual expansion speed was actually almost $2.5$~times higher than this lower bound. 
In this paper, our goal is to study both qualitatively and quantitatively how the different sources of stochasticity in reproduction at the microscopic scale (stochasticity in \textit{timings}, \textit{locations} and \textit{shape parameters}) impact the observed properties of the expansion dynamics at the microscopic scale. 

A possible explanation for the discrepancy between the conjectured and actual expansion speeds presented in \cite{louvetSPA} is the distinctive reproduction dynamics at the front edge, characterised by random "spikes" (see Figure~\ref{fig:spikes} for an illustration). These spikes are relatively infrequent, but can then thicken in all directions and "pull" the front, leading to an increased expansion speed compared to a purely deterministic model. Therefore, this observation suggests that \textit{stochasticity in reproduction can have a significant effect on the growth dynamics and the expansion speed of a random growth model}. In order to test this hypothesis, we will 1) explore the effect of stochasticity in timings of reproduction events using a toy model for which exact computations of the expansion speed can be performed, and 2) study the effect of stochasticity in the shape and location of reproduction events using numerical simulations. 

\begin{figure}
    \centering
    \includegraphics[width=0.4\linewidth]{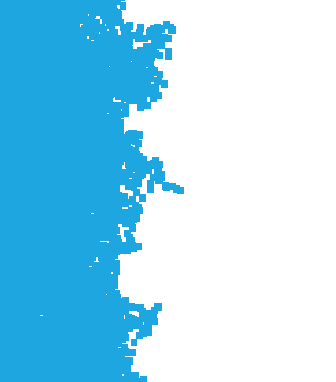}
    \caption{Snapshot of the front edge of the occupied area in the $\infty$-parent SLFV process. The occupied area is the area in blue (genetic types were not represented), and the expansion is going from left to right. At the centre of the picture, we can find the result of a quick succession of reproduction events at the front edge going in the expansion direction, which we will refer to as a "spike". }
    \label{fig:spikes}
\end{figure}

While the $\infty$-parent SLFV process was introduced as a population genetics process, it can also be interpreted as a \textit{stochastic growth model}, which are of longstanding interest in probability and percolation theory and of which a prime example is probably the Eden growth model \cite{eden1961two}. In particular, it presents strong links with the continuous first-passage percolation process introduced in \cite{deijfen2003asymptotic}, the main difference with that process being the increased reproduction rates at the front edge leading to more "spike-driven" growth. The study of the expansion speed of stochastic growth models rising from percolation theory is a notoriously difficult question: while the overall growth regime (\textit{i.e.}, sub-linear, linear or super-linear growth) can generally be identified (see \textit{e.g.}, \cite{auffinger201750,cox1981some,richardson1973random,chatterjee2016multiple}), the actual expansion speed is generally unknown (with the notable exception of the corner growth model from last-passage percolation, see \cite{rost1981non,seppalainen2009lecture}) 
and can only be approximated using simulations (see \textit{e.g.}, \cite{alm2015first}) or explicit lower and upper bounds (see \textit{e.g.}, \cite{alm2002lower,van1993inequalities}). By uncovering how different forms of stochasticity impact the observed expansion dynamics, our results might open the way to alternative strategies to obtain improved bounds on the expansion speed, or even possibly to derive explicit expressions for the expansion speed in a larger class of stochastic growth models than it is currently the case.

The $\infty$-parent SLFV process being connected to general stochastic growth models suggests that the study of the fluctuations of the front interface it generates is also of interest. Indeed, many experimentally-observed growing populations generate interfaces whose fluctuations exhibit similar scaling properties (see \cite{takeuchi2018appetizer} for a review). These fluctuations are also similar to the ones of the rough fronts generated by the Khardar-Parisi-Zhang (KPZ) equation \cite{kardar1986dynamic}, which gives rise to a universality class of stochastic growth models exhibiting the same scaling for front fluctuations. While many stochastic growth models, and among others the Eden growth model, are conjectured to belong to the KPZ universality class, this result is notoriously difficult to establish, and this conjecture is often tested using numerical simulations instead (with the notable exception of the solid-on-solid (SOS) growth model, for which the conjecture could be demonstrated \cite{bertini1997stochastic}). In our numerical study, we explore some of the scaling properties of the interface generated by the expansion in the~$\infty$-parent SLFV process, and find that after a very short initial phase (which may be due to our choice of initial condition), these scaling properties are close to the ones expected of models belonging to the KPZ universality class. This is by no means a proof of the fact that the front interface generated by a $\infty$-parent SLFV process does belong to this universality class, since other properties of the interface should also be investigated (see \textit{e.g.}, \cite{huergo2010morphology,alves2011universal}), but can still be considered strong evidence of the potential of the $\infty$-parent SLFV process as a population genetics model for spatially-expanding populations generating realistic rough expansion fronts. 

\section{Definition of the \texorpdfstring{$\infty$}{}-parent SLFV process and first informal results}\label{sec:defn_process}
In the following the convention $\nmath = \{0,1,...\}$ is used. 
\subsection{Definition of the process}
We start this section with a somewhat informal definition of the $\infty$-parent SLFV process (without genetic types) in~$\rmath^{2}$. As discussed later in this section, this definition will technically not be rigorous and can be made more formal using the approaches used in \textit{e.g.}, \cite{louvetESAIM,louvetArxiv}. However, our semi-informal definition will be sufficient for the purpose of simulating the expansion dynamics, and we refer the reader to the aforementioned articles for more details on the rigorous construction of the process. 

We assume that the area initially occupied in the $\infty$-parent SLFV process is the half-plane 
\begin{equation*}
    \hcal := \{(x,y) \in \rtwo : x \leq 0\}. 
\end{equation*}
As explained in the introduction, the dynamics of the $\infty$-parent SLFV process is driven by a Poisson point process of reproduction events that we now introduce. Let~$\Pi$ be a Poisson point process on $\rmath \times \rtwo \times (0,+\infty)^{2}$ with intensity
\begin{equation*}
    C dt \otimes dz \otimes \mu(dw,dh),
\end{equation*}
where $C > 0$ is a constant and where~$\mu$ is a probability measure on~$(0,+\infty)^{2}$ with finite support $(0,w_{\max}]\times (0,h_{\max}]$. Each point $(t,(x,y),w,h) \in \Pi$ will be interpreted as a reproduction event occurring at time~$t$ in the rectangle
\begin{equation*}
\rcal_{w,h}\big(
(x,y)
\big) := [x-w/2,x+w/2] \times [y-h/2,y+h/2]
\end{equation*}
with centre~$(x,y)$ and with \textit{shape parameters}~$(w,h)$, that is, with width $w$ and height $w$. Notice that these shape parameters~$(w,h)$ can be interpreted as sampled uniformly at random in~$(0,+\infty)^{2}$ according to~$\mu$. 

The $\infty$-parent SLFV process is then the set-valued process $(S_{t})_{t \geq 0}$ whose dynamics is as follows. First we set~$S_{0} = \hcal$. Then, for each $(t,(x,y),w,h) \in \Pi$, if
\begin{equation*}
S_{t-} \cap \rcal_{w,h}\big(
(x,y)
\big) \neq \emptyset ,
\end{equation*}
or in other words, if the reproduction event~$(t,(x,y),w,h)$ intersects the area occupied at time~$t-$, reproduction occurs and we set
\begin{equation*}
S_{t} = S_{t-} \cup \rcal_{w,h}\big(
(x,y)
\big), 
\end{equation*}
while we set $S_{t} = S_{t-}$ otherwise. The process is left unchanged between $\Pi$-driven jumps. 

\begin{remark}
As the initially occupied area $\hcal$ is unbounded, the above definition is not rigorous, because an infinite number of reproduction events $(t,z,w,h)$ intersect the initially occupied area over any time interval of the form~$[0,\varepsilon)$, $\varepsilon > 0$. However, since simulations can only be performed by approximating the process by the corresponding one on a compact (which is affected by reproduction events at a finite rate), the above definition is sufficient for our purpose. 
\end{remark}

\begin{remark}\label{rem:shape_events}
In the original versions of the $\infty$-parent SLFV process considered in \cite{louvetESAIM,louvetSPA,louvetArxiv}, reproduction events are shaped as balls or ellipses rather than rectangles. However, the qualitative results obtained in these articles can be extended to any reasonable compact shape on reproduction events. In particular, we expect the overall behaviour of the expansion speed as a function of~$\mu$ to stay unchanged, but its specific value to change. 
\end{remark}

First results on the growth dynamics of the $\infty$-parent SLFV process were obtained in \cite{louvetSPA,louvetArxiv}, and in order to recall them, we now introduce some notation. For all $x \geq 0$, let
\begin{align*}
\tau\!_{x} &:= \min\{ 
t \geq 0 : (x,0) \in S_{t}
\} \\
\intertext{be the first time at which the location~$(x,0)$ becomes occupied (which we interpret as the front of the expansion reaching location~$(x,0)$), and let}
\sigma\!_{x} &:= \min\{ 
t \geq 0 : \forall \, 0 \leq x' \leq x, (x',0) \in S_{t}
\}
\end{align*}
be the first time at which the segment between~$(0,0)$ and $(x,0)$ is entirely included in the occupied area (which we interpret as the bulk of the expansion reaching location~$(x,0)$). Notice that by definition, $\tau\!_{x} \leq \sigma\!_{x}$. 
We then have the following results. 
\begin{theorem}\label{thm:recap_SLFV} 1) \cite{louvetSPA} There exists $\nu > 0$ such that
\begin{equation*}
    \tau\!_{x}/x \xrightarrow[x \to + \infty]{} \nu
\end{equation*}
in probability and in $\text{L}^{1}$. 

2) \cite{louvetArxiv} Moreover, we also have
\begin{equation*}
    \sigma\!_{x}/x \xrightarrow[x \to + \infty]{} \nu
\end{equation*}
in expectation. 
\end{theorem}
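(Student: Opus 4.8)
The two assertions form a law-of-large-numbers (``shape theorem'') for the hitting times of a monotone, Poisson-driven growth process, so my plan is to run the standard first-passage-percolation machinery: a subadditive ergodic theorem for Part 1, and a maximal-fluctuation (equivalently, sublinear front-depth) estimate for Part 2. I would first record the two monotonicity properties that make everything work, namely that $(S_{t})$ is nondecreasing in $t$ and monotone in the initial condition, so that enlarging the occupied set can only decrease every hitting time. To produce subadditivity I would use a restart-from-a-seed coupling: at the stopping time $\tau_{x}$ the point $(x,0)$ is occupied, and by monotonicity the subsequent evolution dominates the process restarted at time $\tau_{x}$ from the single seed $\{(x,0)\}$ and driven by the restriction of $\Pi$ to $[\tau_{x},+\infty)$. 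The strong Markov property at $\tau_{x}$ together with the space--time stationarity of $\Pi$ makes the additional time $\hat\tau^{(x)}_{y}$ to reach $(x+y,0)$ from this seed independent of the past and equal in law to a seed-to-seed hitting time $\hat\tau_{y}$. This gives $\tau_{x+y}\le \tau_{x}+\hat\tau^{(x)}_{y}$ and, for the seed-to-seed family itself, $\hat\tau_{y+z}\le \hat\tau_{y}+\hat\tau'_{z}$ with $\hat\tau'_{z}$ an independent copy, so Kingman's subadditive ergodic theorem yields $\hat\tau_{n}/n\to\nu:=\inf_{n}\mathbb{E}[\hat\tau_{n}]/n$ almost surely and in $\text{L}^{1}$.

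Two quantitative inputs make this step meaningful and are needed to transfer the limit to $\tau_{x}$. For integrability, hence $\nu<+\infty$, I would bound $\mathbb{E}[\hat\tau_{1}]<+\infty$: a fixed point is covered by reproduction events at the positive rate $C\int wh\,\mu(dw,dh)$, so the seed almost surely starts to grow and a crude comparison with a supercritical growth dominates $\hat\tau_{1}$ by an integrable variable. For positivity, $\nu>0$ — which is exactly finite speed of propagation — I would use a first-moment, path-counting argument: since every reproduction rectangle has width at most $w_{\max}$, any way in which $(x,0)$ becomes occupied must be realised by a causal chain of at least $x/w_{\max}$ successively intersecting events, and bounding the expected number of such time-ordered chains completed before time $t$ shows $\mathbb{P}(\tau_{x}\le t)$ is exponentially small for $t\le cx$ with $c$ small, giving $\mathbb{E}[\tau_{x}]\ge cx$. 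Identifying the constant for the half-plane start with the seed constant $\nu$ then requires a two-sided finite-speed comparison: the upper bound $\tau_{x}\le\hat\tau_{x}$ is immediate by monotonicity, and the lower bound uses that any occupation of $(x,0)$ proceeds through a causal chain that is no faster than the seed-to-seed optimum.

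For the second assertion the key structural observation is that, because $(S_{t})$ is nondecreasing, the segment $[0,x]\times\{0\}$ is occupied at time $t$ if and only if $\tau_{x'}\le t$ for every $0\le x'\le x$; hence, modulo the right-continuity needed to handle the supremum, $\sigma_{x}=\sup_{0\le x'\le x}\tau_{x'}$. The lower bound is immediate from $\sigma_{x}\ge\tau_{x}$ and Part 1, giving $\liminf_{x}\mathbb{E}[\sigma_{x}]/x\ge\nu$. For the matching upper bound I would write, for $x'\le x$, the inequality $\tau_{x'}\le \nu x+(\tau_{x'}-\nu x')$, so that $\mathbb{E}[\sigma_{x}]\le \nu x+\mathbb{E}\big[\sup_{x'\le x}(\tau_{x'}-\nu x')\big]$, and it then suffices to prove the last expectation is $o(x)$. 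Splitting $\tau_{x'}-\nu x'$ into its mean $\mathbb{E}[\tau_{x'}]-\nu x'$ and a centred part, the deterministic piece is uniformly $o(x)$ by the convergence $\mathbb{E}[\tau_{x'}]/x'\to\nu$, while the centred piece is where the work lies. Equivalently, this is the statement that the front depth is sublinear: once the front reaches $(x,0)$, the remaining holes on $[0,x]\times\{0\}$ lie in a band of sublinear width, which is filled in additional time $o(x)$, so $\sigma_{x}-\tau_{x}=o(x)$ in expectation.

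The hard part will be precisely this uniform bound $\mathbb{E}\big[\sup_{x'\le x}(\tau_{x'}-\nu x')\big]=o(x)$, because Part 1 controls each $\tau_{x'}$ only individually and in the mean, whereas here one must dominate the \emph{maximum} of the fluctuations over all $x'\le x$. I would attack it by first establishing a variance or concentration bound of the form $\mathrm{Var}(\tau_{x'})=O((x')^{2\alpha})$ with $\alpha<1$, consistent with the sub-linear (conjecturally KPZ, $\alpha\approx 1/3$) fluctuations discussed in the introduction, and then upgrading pointwise to maximal control by combining it with the monotonicity in $x'$ and a chaining or union bound over a grid of values of $x'$. This concentration estimate, rather than the subadditive skeleton of Part 1, is the genuine obstacle, and it is the reason the bulk result lies deeper than the front result.
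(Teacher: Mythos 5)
You should first note that the paper itself contains no proof of this theorem: it is a recap statement, cited from \cite{louvetSPA} (part 1) and \cite{louvetArxiv} (part 2), so the comparison must be made against the strategies of those works rather than against anything in this text. Measured that way, your proposal has a genuine gap in part 1, at exactly the step you compress into one clause. Your Kingman argument is run for the \emph{seed-to-seed} family $\hat\tau_n$ along the horizontal axis, and the transfer to the half-plane hitting times $\tau_x$ only works in one direction: $\tau_x \le \hat\tau_x$ is indeed immediate by monotonicity, but the claimed lower bound (``any occupation of $(x,0)$ proceeds through a causal chain that is no faster than the seed-to-seed optimum'') does not follow from what you have proved. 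Starting from $\hcal$, the occupation of $(x,0)$ can be initiated anywhere along the infinite front, so $\tau_x$ is an infimum over infinitely many correlated point-to-point times from $(0,y)$ to $(x,0)$, governed by \emph{directional} time constants about which the axial Kingman argument says nothing; to conclude one needs a shape theorem in all directions, convexity and reflection symmetry of the limiting norm (to see that the on-axis direction is optimal), and a lower-tail concentration estimate uniform over the unbounded front to control the infimum. Your first-moment chain-counting argument gives $\liminf \tau_x/x \ge c$ for \emph{some} $c>0$, but nothing forces $c=\nu$; with only what you establish, the half-plane constant could a priori be strictly smaller than the seed constant. The natural way to close this for this particular model — and the route consistent with \cite{louvetSPA} — is self-duality: reversing time in the Poisson point process, $\tau_x$ is equal in law to the time at which the cluster grown from the single seed $(x,0)$ first intersects $\hcal$, and \emph{point-to-half-plane} times are genuinely subadditive with stationary, independent-in-law increments (after first touching the line at distance $x'$, restart from the touching point and use full planar translation invariance). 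That reduction makes the half-plane quantity itself the subadditive object and removes the identification problem entirely; note also that a single-point seed is Lebesgue-null and hence degenerate in the rigorous (measure-valued, modulo-null-sets) construction, so even the restart coupling should be phrased via the covering rectangle of the event at time $\tau_x$ rather than via the point itself.

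For part 2 your reduction $\sigma_x=\sup_{0\le x'\le x}\tau_{x'}$ and the split into a mean part and a centred part are sound, but the proof then stops where the theorem actually lives: the maximal-fluctuation bound $\esp\bigl[\sup_{x'\le x}(\tau_{x'}-\nu x')\bigr]=o(x)$ is not established. You invoke a variance bound $\mathrm{Var}(\tau_{x'})=O((x')^{2\alpha})$ with $\alpha<1$, which is precisely the kind of statement that is not available for this model (this very paper can only estimate such exponents numerically), and even granting a polynomial variance bound, Chebyshev plus a union bound over a grid of order $x$ points does not yield an $o(x)$ bound on the expected supremum — one would need exponential-type concentration or a different argument (the cited work \cite{louvetArxiv} proceeds by directly bounding the front depth $\sigma_x-\tau_x$ sublinearly). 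As written, part 2 is a plan rather than a proof, and part 1 has a concrete missing step; so the proposal, while structured around the right general machinery, does not yet prove the statement.
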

The constant~$\nu$ can be interpreted as the \textit{inverse} of the expansion speed of the $\infty$-parent SLFV process. At first glance, a more intuitive way of measuring the expansion speed would be to consider the location the furthest away on the right that is covered by the process at time~$t$. 
However, this approach yields an "expansion speed" that is actually infinite when starting from a half-plane (which is the case considered in \cite{louvetSPA,louvetArxiv}), as there will always be randomly located areas in which reproduction is exceptionally fast. While this issue does not arise for numerical simulations as they are performed on a compact, the expansion speed it would lead is highly dependent on the height of this compact space, which limits the interpretability of the results on top of making it impossible to compare them to theory. 
More careful definitions of the expansion speed in this scenario can avoid this issue, at the cost of being significantly less tractable from a mathematical point of view, and a more straightforward approach turned out to be using~$\tau\!_{x}$ and $\sigma\!_{x}$ to assess the inverse of the expansion speed instead (as performed in \cite{louvetSPA}). 

\subsection{First informal results on the effect of stochasticity}
We begin our study of the growth properties of the~$\infty$-parent SLFV process by an informal exploration of its expansion dynamics using heuristic arguments. As a start, observe that any location~$z \in \rmath^{2}$ is affected by reproduction events at rate
\begin{equation*}
C\int_{0}^{\infty}\int_{0}^{\infty}\int_{\rtwo} \mathds{1}_{\rcal_{\tilde{w},\tilde{h}}(z')}(z)dz' \mu(d\tilde{w},d\tilde{h})
= C\int_{0}^{\infty}\int_{0}^{\infty} \tilde{w}\tilde{h}\mu(d\tilde{w},d\tilde{h}). 
\end{equation*}
This includes the case of locations at the front edge. Reproduction events affecting such locations will lead to a local advance of the front, by an average of~$\omega/2$ for a reproduction event with shape parameters~$(w,h)$. Therefore, if we completely neglect stochasticity, we expect the expansion speed of a "deterministic" version of the~$\infty$-parent SLFV process to be given by 
\begin{equation*}
    \gamma^{(\mathrm{determ})} := \frac{C}{2}\int_{0}^{\infty}\int_{0}^{\infty}w\mu(dw,dh)\int_{0}^{\infty}\int_{0}^{\infty}\tilde{w}\tilde{h} \mu(d\tilde{w},d\tilde{h}). 
\end{equation*}

The above reasoning amounts to identifying the speed at which the process crosses the shortest path possible between two different locations. In the deterministic case, this path is always the shortest. However, in the presence of stochasticity in timings of reproduction events, longer paths can sometimes end up being crossed faster. Therefore, the "deterministic" speed obtained by always choosing the shortest path rather than the fastest one is also a lower bound on the expansion speed of the $\infty$-parent SLFV. See~\cite{louvetSPA} for a formalization of this result. 

It is possible to improve this first bound by accounting for the effect of stochasticity in shape parameters on the expansion speed. In particular, we observe that the next event to affect a given location~$z \in \rmath^{2}$ is more likely to be a large reproduction event. Indeed, $z$~is affected by reproduction events with shape parameters~$(w,h)$ at rate
\begin{equation*}
C\left(\int_{\rtwo} \mathds{1}_{\rcal_{w,h}(z')}(z)dz'\right) \mu(dw,dh) = Cwh \mu(dw,dh), 
\end{equation*}
so the distribution of the shape parameters of the next reproduction event to affect~$z$ is given by
\begin{equation*}
    \frac{wh\mu(dw,dh)}{\int_{0}^{\infty}\int_{0}^{\infty}w'h'\mu(dw',dh')}. 
\end{equation*}
As a reproduction event with shape parameters~$(w,h)$ occurring at the front edge makes it move forward of an average of~$\omega/2$, 
by a similar reasoning as in the deterministic case (pick the shortest path rather than the fastest one, while this time accounting for stochasticity in shape parameters when determining how fast the shortest path can be crossed), 
we obtain the following lower bound on the speed of growth of the~$\infty$-parent SLFV process:
\begin{align*}
\gamma^{(\mathrm{lb,sto})} &= C \int_{0}^{\infty}\int_{0}^{\infty} \tilde{w}\tilde{h} \mu(d\tilde{w},d\tilde{h}) \times 
\frac{1}{2}\frac{\int_{0}^{\infty}\int_{0}^{\infty} w^{2}h\mu(dw,dh)}{\int_{0}^{\infty}\int_{0}^{\infty} w'h'\mu(dw',dh')} \\
&= \frac{C}{2}\int_{0}^{\infty}\int_{0}^{\infty} w^{2}h \mu(dw,dh). 
\end{align*}
We will refer to this lower bound~$\gamma^{(\mathrm{lb,sto})}$ as a "stochastic" lower bound, as it takes into account stochasticity in shapes of reproduction events, contrary to the "deterministic" lower bound~$\gamma^{(\mathrm{determ})}$ (note however that it still does not account for stochasticity in timings or locations of reproduction events).  Note that~$\gamma^{(\mathrm{lb,sto})}$ is only a lower bound on the expansion speed, as it does not take into account possible sideways growth of the occupied area. In particular, it does not account for the "spiking" phenomenon described in the introduction and illustrated in Figure~\ref{fig:spikes}.  

If shape parameters are deterministic, then $\gamma^{(\mathrm{lb,sto})}$ is equal to the deterministic lower bound $\gamma^{(\mathrm{determ})}$ obtained earlier. Conversely, if we allow shape parameters to be random, then we can find shape distributions~$\mu$ that lead to arbitrary large values for~$\gamma^{(\mathrm{lb,sto})}$ but constant small values for $\gamma^{(\mathrm{determ})}$, in the following sense. 

\begin{proposition}\label{prop:very_large_speed}
For all $A > 0$, we can find a probability measure $\mu^{(A)}$ on $(0,+\infty)^{2}$ with bounded support such that
\begin{equation}\label{eqn:cond_mu_prop}
    \int_{0}^{\infty}\int_{0}^{\infty}w\mu(dw,dz)\int_{0}^{\infty}\int_{0}^{\infty} wh\mu(dw,dh) = 1
\end{equation}
and such that the $\infty$-parent SLFV process associated to the Poisson point process on $\rmath \times \rtwo \times (0,+\infty)^{2}$ with intensity $dt \otimes dz \otimes \mu(dw,dh)$ has an expansion speed bounded from below by~$A$.  
\end{proposition}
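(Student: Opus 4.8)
The plan is to leverage the inequality, argued in the discussion preceding the statement and formalized in \cite{louvetSPA}, that the expansion speed of the $\infty$-parent SLFV process is bounded below by $\gamma^{(\mathrm{lb,sto})}$. Since the Poisson intensity here is $dt \otimes dz \otimes \mu(dw,dh)$, we have $C = 1$, so condition~\eqref{eqn:cond_mu_prop} is exactly $2\gamma^{(\mathrm{determ})} = 1$, while $\gamma^{(\mathrm{lb,sto})} = \tfrac{1}{2}\int_0^\infty\int_0^\infty w^2 h\,\mu(dw,dh)$. It therefore suffices to exhibit, for each $A > 0$, a probability measure $\mu^{(A)}$ with bounded support satisfying~\eqref{eqn:cond_mu_prop} and with $\int w^2 h\,\mu^{(A)} \geq 2A$; the conclusion then follows since the speed is at least $\gamma^{(\mathrm{lb,sto})} \geq A$.

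Write $m_1 = \int w\,\mu$, $m_{11} = \int wh\,\mu$ and $m_{21} = \int w^2 h\,\mu$. The key observation is that, under the constraint $m_1 m_{11} = 1$, the target quantity $m_{21}$ equals the scale-invariant ratio $m_{21}/(m_1 m_{11})$, which is left unchanged by the rescaling $(w,h)\mapsto(aw,bh)$. Choosing $h$ deterministic, say $h \equiv h_0$, collapses this ratio to $\esp[w^2]/\esp[w]^2 \geq 1$, where $\esp$ denotes expectation under the width-marginal of $\mu$. The latter is made arbitrarily large by concentrating the width distribution near a small value while placing a vanishing mass $p$ on a large value, which pushes the ratio towards $1/p$. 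Intuitively, it is precisely stochasticity in the width that decouples $\gamma^{(\mathrm{lb,sto})}$ from the pinned deterministic speed $\gamma^{(\mathrm{determ})}$.

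Concretely, fix $A > 0$ and pick $p \in (0,1)$ with $p < 1/(2A)$. Consider the two-atom measure $\mu = (1-p)\,\delta_{(1,\,h_0)} + p\,\delta_{(w_2,\,h_0)}$ with $w_2 > 1$ and $h_0 > 0$ to be selected. Then $m_1 = (1-p) + p\,w_2$, and imposing~\eqref{eqn:cond_mu_prop} amounts to setting $h_0 = 1/m_1^2$, so that $m_1 m_{11} = h_0 m_1^2 = 1$. With this choice, $m_{21} = h_0\big[(1-p) + p\,w_2^2\big] = \big[(1-p)+p\,w_2^2\big]\big/\big[(1-p)+p\,w_2\big]^2$, which converges to $1/p > 2A$ as $w_2 \to +\infty$. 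Fixing $w_2$ large enough that $m_{21} \geq 2A$ and calling the resulting measure $\mu^{(A)}$, its support $\{1,w_2\}\times\{h_0\}$ is bounded, condition~\eqref{eqn:cond_mu_prop} holds, and the expansion speed is at least $\gamma^{(\mathrm{lb,sto})} = \tfrac{1}{2}m_{21} \geq A$.

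I do not expect any serious obstacle: the construction is elementary, and the only substantive ingredient is that $\gamma^{(\mathrm{lb,sto})}$ is a bona fide lower bound on the expansion speed, which rests on the shortest-path argument (accounting for stochastic shapes) recalled just before the statement. The one point deserving care is the bookkeeping of the constant $C$ and of the normalization~\eqref{eqn:cond_mu_prop}, together with checking that the degenerate regime $w_2 \to +\infty$ inflates $m_{21}$ while keeping $m_1 m_{11}$ fixed at $1$ — which is exactly the mechanism by which width-stochasticity drives the speed up.
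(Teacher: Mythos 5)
Your proof is correct and follows essentially the same route as the paper: both rely on $\gamma^{(\mathrm{lb,sto})}$ being a lower bound on the expansion speed and construct a two-atom width distribution with deterministic height, tuned so that \eqref{eqn:cond_mu_prop} holds while rare wide events inflate $\int w^{2}h\,\mu$ beyond $2A$. The only difference is parametrization (you fix a small mass $p$ and send the large width $w_{2}\to\infty$, adjusting the height $h_{0}$ for normalization, whereas the paper uses atoms at $n^{-1}$ and $n$ with $p=n/(n+1)$ and height $1$), which is immaterial.
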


\begin{proof}
Let $n \in \nmath \backslash \{0,1\}$ and $p \in (0,1)$. We set
\begin{equation*}
\mu^{[n,p]}(dw,dh) := \left(
p\delta_{n^{-1}}(dw) + (1-p) \delta_{n}(dh)
\right)\otimes \delta_{1}(dh). 
\end{equation*}
We have
\begin{equation*}
\int_{0}^{\infty}\int_{0}^{\infty}w\mu^{[n,p]}(dw,dz)\int_{0}^{\infty}\int_{0}^{\infty} wh\mu^{[n,p]}(dw,dh) = \left(
n^{-1}p + n(1-p)
\right)^{2}, 
\end{equation*}
so Condition~\eqref{eqn:cond_mu_prop} is satisfied if we take~$p = n/(n+1)$. 
Then, we saw earlier that a lower bound on the expansion speed of the $\infty$-parent SLFV with shape parameters distribution $\mu^{[n,p]}$ and with~$C = 1$ was given by
\begin{align*}
\gamma^{(\mathrm{lb,sto})} &= \frac{1}{2}\int_{0}^{\infty}\int_{0}^{\infty} w^{2}h\mu^{[n,p]}(dw,dh) \\
&= \frac{1}{2}\left(
\frac{n}{n^{2}(n+1)} + \frac{n^{2}}{n+1}
\right) \\
&= \frac{1+n^{3}}{2n(n+1)}
\end{align*}
We can then choose~$n$ large enough so that $1+n^{3} \geq 2An(n+1)$, which allows us to conclude. 
\end{proof}
In other words, even when only accounting for randomness in shapes of reproduction events, simple computations already predict a strong dependency of the expansion speed on the distribution~$\mu$ of shape parameters, with rare extreme events leading to a very sharp increase of the expansion speed. 
The lower bounds identified in this section will be used as part of the simulation-based study (Section~\ref{sec:num_scheme} and~\ref{sec:results_simus}) in order to be able to disentangle the effects on the growth dynamics of the different sources of randomness. 

\section{The two columns growth process as a toy model for growth at the front edge}\label{sec:toy_model}
\begin{figure}[t]
\centering
\includegraphics[width = 0.4\linewidth]{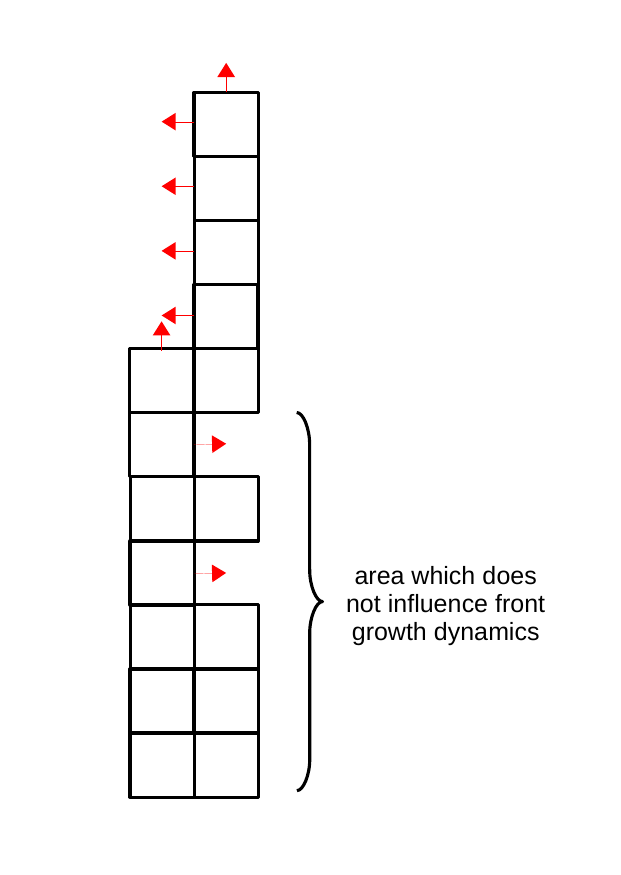}
\caption{Illustration of the growth dynamics of the two columns growth process, or 2-CGP. The red arrows indicate possible growth events occurring at rate 1. }\label{fig:dessin_2_cgp}
\end{figure}

While Section~\ref{sec:defn_process} focused on an informal derivation of the effect of randomness in shapes on the growth dynamics, two other sources of randomness appear in the reproduction dynamics of the $\infty$-parent SLFV: randomness in \textit{timings} and \textit{locations} of reproduction events. In this section, we focus on the effect of randomness in timings, by means 
of a toy model for growth at the front edge whose limited spatial structure makes analytical computations tractable. This toy model can be defined informally as follows. 
We consider two adjacent piles of cubes, thereafter referred to as the \textit{left pile} and the \textit{right pile}. Each cube has height~$1$. A cube is added on top of the left (\emph{resp.}, right) pile at rate~$1$, independently from the other pile. Moreover, the piles can also grow sideways: if there is a cube at height~$h$ in the left (\emph{resp.}, right) pile, and no cube at such height in the other pile, then a cube is added to the right (\emph{resp.}, left) pile at height~$h$ at rate~$1$. In particular, if the left pile is one cube higher than the right pile, then the total rate at which the height of the right pile increases of~$1$ is equal to~$2$, as the growth can be due to a new cube falling on top of the right pile as well as sideways growth of the left pile. See Figure~\ref{fig:dessin_2_cgp} for an illustration of the dynamics of the process. We will call this process the \textit{two columns growth process}, or 2-CGP for short.

\begin{rem}\label{rem:reset_process}
The term "pile" can be slightly misleading, since it can have holes due to the other pile growing sideways. See Figure~\ref{fig:dessin_2_cgp} for an illustration. 
However, the areas in which holes can be present have no influence on the speed of growth:
If $l_{t}$ (\emph{resp.}, $r_{t}$) represents the maximal height reached by the left (\emph{resp.}, right) pile, then the cubes at height $h < \min(l_{t},r_{t})$ no longer contribute to the growth of the process. 
Therefore, we can disregard the potential presence of holes. Moreover, a nice consequence of this observation is that we can "reset" the process whenever $l_{t} = r_{t}$. We will make a heavy use of this property of the process to analyse it. 
\end{rem}

Regarding the speed of growth of each of the two piles, observe that without the interaction with the other pile, the speed of growth of an isolated pile of cubes would be of~$1$. 
As a first step, in Theorem~\ref{thm:increaseofspeed}, we will show that the interaction between the two piles increases the growth speed by a factor of at least~$4/3$, and at most~$2$. 
Then, in Theorem~\ref{theorem:expectationapproxA}, we will derive a procedure to derive precise numerical approximations of the exact expansion speed without simulating the process, and show that it is close to~$1.46$. As we only consider two interacting piles, this increase is less important than in the case of the $\infty$-parent SLFV process, it is already evidence of a positive effect of stochasticity in timings and locations of reproduction events on the expansion speed. 

\subsection{The two columns growth process: Definition and properties}\label{sec:5_1}
We now define the 2-CGP rigorously.
The state space over which the process is defined is the set $\widetilde{\mathcal{S}}$ defined by
\begin{equation*}
\widetilde{\mathcal{S}} := \left\{
(i,j) \in \nmath \times \nmath : i \leq j
\right\}.
\end{equation*}
If $(i,j) \in \widetilde{\mathcal{S}}$, then $i$ represents the height reached by the \textit{lowest} pile, and $j$ the height reached by the \textit{highest} pile.

\begin{defn}\label{defn:2_cgp}
The two columns growth process $(G_{t})_{t \geq 0} = (m_{t},M_{t})_{t \geq 0}$, thereafter referred to as the 2-CGP, is the continuous-time Markov chain on the state space $\widetilde{\mathcal{S}}$ such that $G_{0} = (0,0)$ and whose transition rates are as follows. For all $(i,j) \in \widetilde{\mathcal{S}}$,
\begin{enumerate}
\item If $i = j$, then $(i,j) \to (i,i+1)$ at rate $2$, and no other transitions are possible.
\item If $j \geq i+1$, then
\begin{equation*}
(i,j) \to
\left\lbrace
\begin{array}{ll}
(i,j+1)  & \mbox{at rate }1,\\
(i+1,j) & \mbox{at rate }2,\\
(i+k,j), k \in \llbracket 2,j-i \rrbracket & \mbox{at rate }1 \mbox{ if } j > i+1,
\end{array}\right.
\end{equation*}
and no other transitions are possible.
\end{enumerate}
\end{defn}

These transition rates encode the exact dynamics described earlier. Indeed, if $i = j$, then both piles have the same height, and cubes fall onto each one of the piles at rate~$1$, yielding a total transition rate of $2$ as we do not record which pile is the highest. If $i \neq j$, then each pile can grow upwards at rate $1$, but the highest pile can also grow sideways. Depending on where the highest pile grows sideways, this results in a more or less sharp increase in the height of the lowest pile.

As discussed in Remark~\ref{rem:reset_process}, we do not need to keep track of the presence/absence of cubes below the height reached by the lowest pile to investigate the growth dynamics of the $2$-CGP. Moreover, we also mentioned that it is possible to "reset" the process whenever the two piles have the same height, or formally, when~$G_{t}$ belongs to the set~$\mathcal{S}_{\square \! \square}$ defined as
\begin{equation*}
\mathcal{S}_{\square\!\square} := \left\{
(i,i) : i \in \nmath
\right\}.
\end{equation*}
Let $T_{\square}$ be the time needed for the process~$(G_{t})_{t \geq 0}$ to leave~$\mathcal{S}_{\square \! \square}$ and then return back to a state belonging to~$\mathcal{S}_{\square \! \square}$, starting from a state in~$\mathcal{S}_{\square \! \square}$. We are now in shape to state our main results on the dynamics of the~$2$-CGP. 

\begin{theorem}\label{thm:increaseofspeed}\label{eqn:limiting_speed} We have
\begin{equation*}
\lim\limits_{t \to + \infty} \frac{M_{t}}{t} = 1 + \frac{1}{2\bE_{(0,0)}[T_{\square}]} \in [4/3,2]\qquad  \text{ a.s.}
\end{equation*}\end{theorem}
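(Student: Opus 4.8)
The plan is to exploit the regenerative structure of the $2$-CGP at its successive visits to $\mathcal{S}_{\square\!\square}$, as anticipated in Remark~\ref{rem:reset_process}, and then invoke a renewal--reward argument. Since the transition rates in Definition~\ref{defn:2_cgp} depend on $(i,j)$ only through the gap $j-i$, the gap process $g_{t} := M_{t}-m_{t}$ is itself a continuous-time Markov chain on $\nmath$, and $g_{t}=0$ exactly when $G_{t} \in \mathcal{S}_{\square\!\square}$. Let $0=\theta_{0} < \theta_{1} < \theta_{2} < \cdots$ be the successive times at which $G_{t}$ returns to $\mathcal{S}_{\square\!\square}$ after having left it. By the strong Markov property together with the spatial homogeneity of the rates, the pairs $(\theta_{k}-\theta_{k-1},\,M_{\theta_{k}}-M_{\theta_{k-1}})$, $k\geq 1$, are i.i.d., with $\theta_{k}-\theta_{k-1}\overset{d}{=}T_{\square}$; write $\Delta M$ for a generic copy of the per-cycle increment $M_{\theta_{1}}-M_{\theta_{0}}$. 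Because $m_{\theta_{k}}=M_{\theta_{k}}$ at the regeneration times, $M_{\theta_{k}}=\sum_{l=1}^{k}\Delta M_{l}$ with the $\Delta M_{l}$ i.i.d.

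Assuming for the moment that $\bE_{(0,0)}[T_{\square}]<\infty$ (verified below), the strong law of large numbers gives $\theta_{k}/k \to \bE_{(0,0)}[T_{\square}]$ and $M_{\theta_{k}}/k \to \bE[\Delta M]$ almost surely, hence $M_{\theta_{k}}/\theta_{k} \to \bE[\Delta M]/\bE_{(0,0)}[T_{\square}]$. Since $t \mapsto M_{t}$ is non-decreasing, sandwiching $M_{\theta_{k(t)}} \leq M_{t} \leq M_{\theta_{k(t)+1}}$ between consecutive regeneration times (with $\theta_{k(t)}\leq t < \theta_{k(t)+1}$) and using $\theta_{k}/k \to \bE_{(0,0)}[T_{\square}]$ promotes this to the full limit $M_{t}/t \to \bE[\Delta M]/\bE_{(0,0)}[T_{\square}]$; this is the standard renewal--reward theorem.

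It remains to evaluate $\bE[\Delta M]$. The quantity $\Delta M$ counts the upward jumps of $M$ during one cycle, and from Definition~\ref{defn:2_cgp} the rate of such a jump is $2$ when $g_{t}=0$ and $1$ when $g_{t}\geq 1$. Applying Dynkin's formula to the counting process of $M$-jumps over one cycle (its compensator being the time-integral of this rate), we obtain
\begin{equation*}
\bE[\Delta M] = 2\,\bE[\text{time at } g=0 \text{ per cycle}] + \bE[\text{time at } g\geq 1 \text{ per cycle}].
\end{equation*}
Within one cycle the process sits at $g=0$ for a single $\mathrm{Exp}(2)$ sojourn of mean $1/2$, and at $g\geq 1$ for the remaining expected time $\bE_{(0,0)}[T_{\square}]-\tfrac12$, so $\bE[\Delta M] = 2\cdot\tfrac12 + (\bE_{(0,0)}[T_{\square}]-\tfrac12) = \bE_{(0,0)}[T_{\square}] + \tfrac12$. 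This yields precisely $M_{t}/t \to 1 + \tfrac{1}{2\bE_{(0,0)}[T_{\square}]}$.

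Finally, to justify the integrability assumption and to obtain the range $[4/3,2]$, I would bound $\bE_{(0,0)}[T_{\square}]$. On the one hand $T_{\square}$ is at least its initial $\mathrm{Exp}(2)$ sojourn at $g=0$, so $\bE_{(0,0)}[T_{\square}]\geq\tfrac12$. On the other hand, from every state with $g\geq 1$ the rate of a direct transition to $g=0$ is at least $1$ (it equals $2$ from $g=1$ and $1$ from $g\geq 2$), so the excursion away from $0$ satisfies $\bP(\,\cdot>t)\leq e^{-t}$ and thus has mean at most $1$; hence $\bE_{(0,0)}[T_{\square}] = \tfrac12 + \bE[\text{excursion}] \leq \tfrac32 < \infty$. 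Feeding $\bE_{(0,0)}[T_{\square}]\in[\tfrac12,\tfrac32]$ into the formula gives the limit in $[4/3,2]$. The main obstacle is making the regeneration step fully rigorous --- establishing positive recurrence and the clean i.i.d. cycle decomposition --- but once the $\mathrm{Exp}(1)$ domination above secures $\bE_{(0,0)}[T_{\square}]<\infty$ (and hence $\bE[\Delta M]<\infty$), the renewal--reward conclusion and the bounds follow routinely.
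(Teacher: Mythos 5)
Your proposal is correct, and while it shares the paper's overall renewal--reward skeleton, it reaches the key identity by a genuinely different and arguably cleaner route. The paper also decomposes the dynamics into i.i.d.\ cycles between visits to $\mathcal{S}_{\square\!\square}$ (its Proposition~\ref{prop:speed_of_growth}, which invokes the cumulative-process theorem of Asmussen), and then proves $\bE_{(0,0)}[M_{T_{\square}}] = \tfrac12 + \bE_{(0,0)}[T_{\square}]$ via a martingale problem with \emph{spatially truncated} test functions $f_{d}(i,j) = j\un{j<d}$, stopped at $T_{\square}\wedge T_{d}$, followed by a delicate passage to the limit $d\to+\infty$ (its limits \eqref{eqn:limit_1}--\eqref{eqn:limit_4}, justified by dominated convergence); this in turn requires a separate a priori moment bound $\bE_{(0,0)}[M_{T_{\square}}]\le 3$ (the paper's Lemma~\ref{lem:esperance_hauteur}, proved by Poisson domination). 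You instead observe that $M$ jumps at rate $2$ on $\{g=0\}$ and rate $1$ on $\{g\ge 1\}$, and identify $\bE[\Delta M]$ with the expected compensator $\bE\bigl[\int_{0}^{T_{\square}}(2\un{g_{s}=0}+\un{g_{s}\ge 1})\,ds\bigr]$, using that the occupation time of $\{g=0\}$ in a cycle is exactly the initial $\mathrm{Exp}(2)$ sojourn. This is the ``untruncated'' version of the paper's generator computation, made rigorous by localizing in \emph{time} ($T_{\square}\wedge t$, then monotone convergence, legitimate since both the counting process and its compensator are non-decreasing and the jump rate is bounded by $2$) rather than in \emph{space}; it simultaneously yields finiteness, $\bE[\Delta M]\le 2\,\bE_{(0,0)}[T_{\square}]<+\infty$, so the paper's Lemma~\ref{lem:esperance_hauteur} and the truncation limits become unnecessary. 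Your bounds on $\bE_{(0,0)}[T_{\square}]$ coincide with the paper's: the lower bound $1/2$ is the initial sojourn, and your $\mathrm{Exp}(1)$ stochastic domination of the excursion (valid because every state with $g\ge 1$ has a direct rate-$\ge 1$ transition to $g=0$) is exactly the mechanism behind the paper's clock $T_{\to}$ in Lemma~\ref{lem:esperance_T_square}. The only step you should spell out is the martingale/optional-stopping justification of the compensator identity sketched above; once that is written, your argument is complete and somewhat shorter than the paper's.
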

This result will be shown at the end of Section~\ref{subsec:proof_first_theorem}.
\begin{theorem}\label{theorem:expectationapproxA}
 Let $(A_{i}^{(N,\epsilon)})_{0 \leq i \leq N}$ be defined recursively by the following properties. 
\begin{enumerate}
\item Set $A_{N}^{(N,\epsilon)} = 1$, $A_{N-1}^{(N,\epsilon)} = N+1$ and $A_{N-2}^{(N,\epsilon)} = (N+1)A_{N-1}^{(N,\epsilon)}-2A_{N}^{(N,\epsilon)}$.
\item For all $i \in \llbracket 2,N-2 \rrbracket$ set
\begin{equation*}
A_{i-1}^{(N,\epsilon)} = (i+2)A_{i}^{(N,\epsilon)}-2A_{i+1}^{(N,\epsilon)}-\sum_{j = i+2}^{N}A_{j}^{(N,\epsilon)}.
\end{equation*}
\item We conclude with
\begin{equation*}
A_{0}^{(N,\epsilon)} = \frac{1}{2} \left(
2A_{1}^{(N,\epsilon)} + \sum_{j = 2}^{N} A_{j}^{(N,\epsilon)}
\right)(1-\exp(-2\epsilon)).
\end{equation*}
\end{enumerate}
Then
\begin{equation*}
\bE_{(0,0)} [T_{\square}]
= \frac{1}{2} + \lim\limits_{\substack{\epsilon \to 0 \\ N \to + \infty \\ N^{2}\epsilon \to 0}} \sum_{i = 1}^{N} \frac{\epsilon(i+2)A_{i}^{(N,\epsilon)}(1-\exp(-2\epsilon))}{2A_{0}^{(N,\epsilon)}(1-\exp(-(i+2)\epsilon))}.
\end{equation*}
\end{theorem}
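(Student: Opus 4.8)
The plan is to collapse the two‑dimensional chain $(G_t)=(m_t,M_t)$ onto the one‑dimensional \emph{gap process} $D_t := M_t - m_t$. Since the rates in Definition~\ref{defn:2_cgp} depend only on the difference $j-i$, $(D_t)_{t\geq 0}$ is itself a continuous‑time Markov chain on $\nmath$: it jumps from $0$ to $1$ at rate $2$; from $1$ to $2$ at rate $1$ and to $0$ at rate $2$; and from $d\geq 2$ to $d+1$ at rate $1$, to $d-1$ at rate $2$, and to each of $0,\dots,d-2$ at rate $1$. In particular the total exit rate from state $i$ is $q_i=i+2$ for every $i\geq 0$. Under this reduction $\mathcal{S}_{\square\square}$ becomes the single state $0$, and $T_\square$ is exactly the time $D$ takes to leave $0$ and return to it. Because from any large gap the downward jumps overwhelm the single upward jump, $D$ has a strong pull towards $0$; I would first record that this makes $D$ positive recurrent with a stationary measure $\pi$ decaying faster than any geometric (roughly like $1/(i+2)!$), a fact that will render all truncations below harmless.

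Next I would express the mean return time through the regenerative structure at $0$. Writing $V_i$ for the expected number of visits of $D$ to state $i$ during one excursion (from leaving $0$ until returning) and using that each sojourn at $i$ is an independent $\mathrm{Exp}(q_i)$ holding time, a standard renewal argument (with $V_i=\rho_i/\rho_0$ for the embedded jump chain's stationary measure $\rho_i\propto q_i\pi_i$) gives
\begin{equation*}
\bE_{(0,0)}[T_\square] = \frac{1}{q_0} + \sum_{i\geq 1}\frac{V_i}{q_i}, \qquad V_i = \frac{q_i\pi_i}{q_0\pi_0}.
\end{equation*}
The term $1/q_0 = 1/2$ is the mean holding time at $0$ and accounts for the explicit $\tfrac12$ in the statement, so it only remains to evaluate the excursion contribution $\sum_{i\geq 1}V_i/q_i=\sum_{i\geq 1}\pi_i/(q_0\pi_0)$ in terms of the stationary weights.

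The third step is to identify the numbers $A_i$ as unnormalised stationary weights of a finite truncation of $D$. Writing the global balance equations for $\pi$ — namely $(i+2)\pi_i = \pi_{i-1}+2\pi_{i+1}+\sum_{j\geq i+2}\pi_j$ for $i\geq 2$, together with $2\pi_0 = 2\pi_1+\sum_{j\geq 2}\pi_j$ at the origin — and truncating the state space at level $N$ by deleting the upward jump out of $N$, one obtains \emph{exactly} the recursion of items~(1)--(2); the boundary values $A_N=1$ and $A_{N-1}=N+1$ reproduce the balance relation $(N+1)\pi_N=\pi_{N-1}$ at the top state. The $\epsilon$‑dependence then enters only through the holding times: replacing the exact mean sojourn $1/q_i$ by its time‑discretised counterpart $\epsilon/(1-\exp(-q_i\epsilon))$ (the expected sojourn at $i$ for the lazy $\epsilon$‑step chain that stays with probability $\exp(-q_i\epsilon)$) and normalising by the origin weight $A_0=\tfrac12\bigl(2A_1+\sum_{j=2}^N A_j\bigr)(1-\exp(-2\epsilon))$ turns $V_i/q_i$ into precisely the summand $\tfrac{\epsilon(i+2)A_i(1-\exp(-2\epsilon))}{2A_0(1-\exp(-(i+2)\epsilon))}$; substituting the definition of $A_0$ one checks that the two factors $(1-\exp(-2\epsilon))$ cancel, leaving $V_i^{(N)}\cdot \epsilon/(1-\exp(-q_i\epsilon))$ with $V_i^{(N)}$ the truncated visit count.

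The main obstacle is the justification of the joint limit. I would split the discrepancy into (i) a truncation error, consisting of the tail $\sum_{i>N}V_i/q_i$ and the deviation of the truncated weights from the genuine infinite‑volume ones, which vanishes as $N\to\infty$ thanks to the super‑exponential decay of $\pi$; and (ii) a time‑discretisation error from $\epsilon/(1-\exp(-q_i\epsilon))-1/q_i$, whose relative size is of order $q_i\epsilon=(i+2)\epsilon$ and hence at most of order $N\epsilon$ on the retained states. Controlling the accumulation of this per‑state error across the $O(N)$ truncation levels is what forces the scaling $N^2\epsilon\to 0$; under this condition both error sources vanish simultaneously and the displayed sum converges to $\sum_{i\geq 1}V_i/q_i$, yielding the identity. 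The delicate point throughout is to show that the backward recursion solved from level $N$ indeed converges to the true stationary measure rather than to a spurious growing mode, which once more rests on the strong downward drift of $D$.
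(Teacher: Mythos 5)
Your algebraic core is correct, and your route is genuinely different from the paper's. You work directly with the continuous-time gap chain $D_t=M_t-m_t$: the excursion/Kac identity $\bE_{(0,0)}[T_{\square}]=1/q_0+\sum_{i\geq 1}V_i/q_i$, the identification of the recursion in items (1)--(2) with the stationary balance equations of the level-$N$ truncation of $D$ (I checked: $(i+2)A_i^{(N,\epsilon)}=A_{i-1}^{(N,\epsilon)}+2A_{i+1}^{(N,\epsilon)}+\sum_{j=i+2}^{N}A_j^{(N,\epsilon)}$ is exactly flow balance at level $i$, and $(N+1)A_N^{(N,\epsilon)}=A_{N-1}^{(N,\epsilon)}$ is balance at the reflected top), and the cancellation of the $(1-e^{-2\epsilon})$ factors, which rewrites the summand as $V_i^{(N)}\cdot\epsilon/(1-e^{-(i+2)\epsilon})$ with $V_i^{(N)}=(i+2)A_i^{(N,\epsilon)}\big/\big(2A_1^{(N,\epsilon)}+\sum_{j=2}^N A_j^{(N,\epsilon)}\big)$, are all right. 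The paper does something structurally different: it never invokes the infinite-volume chain or its stationary measure. It builds a finite discrete-time chain (Definition~\ref{defn:discretized_2_cgp}), couples it pathwise to the 2-CGP (Lemma~\ref{lem:coupling}), bounds the probability that the coupling fails before the return time (Lemmas~\ref{lem:TN_inf_T_square} and~\ref{lem:Tp_inf_T_square}, yielding Proposition~\ref{prop:cvg_expectation}), and computes that finite chain's invariant law exactly (Lemma~\ref{lem:pi_Ai}, Proposition~\ref{prop:explicit_formula_expectation}), using the accelerated chain of Lemma~\ref{lem:accelerated_2_cgp} to repair the mismatch between ``exit-then-return'' and first-return time --- a mismatch your continuous-time excursion decomposition handles for free.

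There is, however, a genuine gap exactly where you write ``I would first record'' and ``the delicate point''. Your limit argument requires two unproven inputs: (i) that the infinite gap chain has a (unique) stationary law $\pi$ with fast-decaying tail --- positive recurrence is cheap (the argument of Lemma~\ref{lem:esperance_T_square} gives $\bE_{(0,0)}[T_{\square}]\leq 3/2$), but the decay is asserted, not proven; and, more seriously, (ii) that the weights produced by the backwards recursion started at level $N$ converge, after normalisation, to $\pi$ rather than to a spurious mode. You flag (ii) and invoke ``the strong downward drift'', but give no mechanism, and this is precisely the step that carries the whole limit: without it, the identity you derive is for the truncated chain only, and nothing connects it to $\bE_{(0,0)}[T_{\square}]$. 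The natural repair is an excursion-coupling argument: the truncated and untruncated gap chains can be run together so that their excursions from $0$ coincide unless they reach level $N$, an event of probability $O(2^{-N})$ (this is essentially the content of the paper's Lemma~\ref{lem:TN_inf_T_square}); since both stationary measures are ratios of expected occupation times per excursion, their difference is exponentially small uniformly in the state. With such a lemma added --- your discretisation-error accounting (relative error $O((i+2)\epsilon)$ per retained state, harmless under $N^2\epsilon\to 0$) is fine as is --- your route closes and would constitute a valid alternative proof.
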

The proof of this result is at the very end of Section~\ref{subsec:prooftheorem54}. Together those two theorems give
$$\lim\limits_{t \to + \infty} M_{t}t^{-1} \simeq 1.46,$$ which is a stronger lower bound than the lower bound of $4/3$ in Theorem~\ref{thm:increaseofspeed}. 

\subsection{Proof of Theorem~\ref{thm:increaseofspeed}}\label{subsec:proof_first_theorem}
We start with the proof of Theorem~\ref{thm:increaseofspeed}. To do so, we adopt the following strategy. First, in Proposition~\ref{prop:speed_of_growth}, we use an interpretation of the~$2$-CGP as a cumulative process to derive the existence of the limit of~$M_{t} t^{-1}$ when~$t \to + \infty$, and express it as a function of the expectations of~$M_{T_{\square}}$ and~$T_{\square}$. Then, we use a martingale problem of which the~$2$-CGP is solution to link together the expectations of~$M_{T_{\square}}$ and~$T_{\square}$. 

In order to implement this strategy, we start by establishing upper bounds on~$\bE_{(0,0)}\left[T_{\square}\right]$ and~$\bE_{(0,0)}[M_{T_{\square}}]$. While we expect these upper bounds to be suboptimal, we are mainly interested in their existence rather than their actual value, which we need for our proof strategy for Theorem~\ref{thm:increaseofspeed}. 

\begin{lem}\label{lem:esperance_T_square} We have
\begin{equation*}
\bE_{(0,0)}\left[T_{\square}\right] \leq 3/2.
\end{equation*}
\end{lem}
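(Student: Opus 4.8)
The plan is to reduce the computation of $\bE_{(0,0)}[T_{\square}]$ to a hitting-time estimate for the \emph{gap process} $D_{t} := M_{t} - m_{t}$, and then to control this hitting time by an elementary Foster--Lyapunov drift argument. The first observation is that, since the transition rates in Definition~\ref{defn:2_cgp} depend on $(i,j)$ only through the difference $j-i$, the process $(D_{t})_{t \geq 0}$ is itself an autonomous continuous-time Markov chain on $\nmath$, with $D = 0$ corresponding exactly to $\mathcal{S}_{\square\!\square}$. Starting from $(0,0)$, the process holds at the diagonal for an $\mathrm{Exp}(2)$ time (mean $1/2$) before jumping to $(0,1)$, i.e.\ to gap $D = 1$; thereafter $T_{\square}$ is completed precisely when $D$ returns to $0$. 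Writing $\tau_{0} := \inf\{t \geq 0 : D_{t} = 0\}$ and $\bE_{1}$ for the law of the gap process started at $D_{0} = 1$, this yields the decomposition $\bE_{(0,0)}[T_{\square}] = \tfrac{1}{2} + \bE_{1}[\tau_{0}]$, so it suffices to prove $\bE_{1}[\tau_{0}] \leq 1$.

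Next I would compute the action of the generator $\mathcal{L}$ of $(D_{t})$ on the identity function $f(D) = D$. Reading off the rates, for $D \geq 2$ one gets
\begin{equation*}
\mathcal{L}f(D) = 1 \cdot (+1) + 2 \cdot (-1) + \sum_{k=2}^{D}(-k) = -1 - \Big( \tfrac{D(D+1)}{2} - 1 \Big) = -\frac{D(D+1)}{2},
\end{equation*}
while for $D = 1$ the sideways transitions are absent and $\mathcal{L}f(1) = 1 \cdot (+1) + 2 \cdot (-1) = -1$. In particular $\mathcal{L}f(D) \leq -1$ for every $D \geq 1$, with $f \geq 0$ and $f(0) = 0$.

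With this drift inequality in hand I would invoke the standard hitting-time criterion. By Dynkin's formula, $f(D_{t}) - f(D_{0}) - \int_{0}^{t} \mathcal{L}f(D_{s})\,ds$ is a martingale; stopping at $t \wedge \tau_{0}$ and using $\mathcal{L}f \leq -1$ on $\{D \geq 1\}$ gives $\bE_{1}[t \wedge \tau_{0}] \leq f(1) - \bE_{1}[f(D_{t \wedge \tau_{0}})] \leq f(1) = 1$, and letting $t \to +\infty$ by monotone convergence yields $\bE_{1}[\tau_{0}] \leq 1$ (which in passing shows $\tau_{0} < +\infty$ almost surely). Combining with the decomposition above gives $\bE_{(0,0)}[T_{\square}] \leq \tfrac{1}{2} + 1 = \tfrac{3}{2}$.

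The one point that genuinely requires care is justifying the optional-stopping step, i.e.\ that $(D_{t})$ does not explode before $\tau_{0}$, so that Dynkin's formula is valid up to $t \wedge \tau_{0}$; this is where I expect the main (modest) obstacle to lie, the rest being routine. The cleanest resolution is to note that upward jumps $D \to D+1$ occur at rate exactly $1$ in every state $D \geq 1$, so the number of upward jumps before time $t$ is dominated by a rate-$1$ Poisson process $(N_{t})$, whence $\sup_{s \leq t} D_{s} \leq D_{0} + N_{t} < +\infty$ and no explosion can occur on finite time intervals. Finally, since the drift is in fact $\leq -3$ for all $D \geq 2$, the estimate $\bE_{1}[\tau_{0}] \leq 1$ is deliberately loose, in line with the remark that these upper bounds are not expected to be optimal.
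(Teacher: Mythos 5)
Your proof is correct, but it takes a genuinely different route from the paper. The paper's argument is a pathwise coupling: after the process leaves $(0,0)$, an independent $\mathcal{E}\text{xp}(1)$ clock $T_{\to}$ is attached to the top cube of the highest pile, and one observes that when this clock rings the process is (or already was) back in $\mathcal{S}_{\square\!\square}$, giving the stochastic domination $T_{\square} \leq \widetilde{T} + T_{\to}$ and hence $\bE_{(0,0)}[T_{\square}] \leq 1/2 + 1$. You instead pass to the gap process $D_{t} = M_{t} - m_{t}$ (correctly noting it is an autonomous Markov chain, since all rates in Definition~\ref{defn:2_cgp} depend only on $j-i$), compute the generator on $f(D)=D$ to get drift $\mathcal{L}f(D) \leq -1$ for $D \geq 1$, and conclude by Dynkin's formula and optional stopping that the expected return time to $0$ from $1$ is at most $1$. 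Your generator computation is right ($\mathcal{L}f(D) = -D(D+1)/2$ for $D \geq 2$, $\mathcal{L}f(1)=-1$), and you correctly flag the one technical point — validity of Dynkin's formula with unbounded $f$ and unbounded jump rates — and supply the key fact (upward jumps have rate exactly $1$, so $D$ is dominated by a Poisson process and cannot explode); a fully rigorous write-up would make the localization explicit by stopping additionally at $T_{n} = \inf\{t : D_{t} \geq n\}$ and letting $n \to \infty$ by monotone convergence, but this is routine given your non-explosion observation. As for what each approach buys: your Foster--Lyapunov argument is more mechanical and generalizes easily (and the strongly superlinear drift $-D(D+1)/2$ leaves room for sharper bounds with a better Lyapunov function), while the paper's clock construction, though more ad hoc, yields a \emph{pathwise} bound $T_{\square} \leq \widetilde{T} + T_{\to}$ that the paper reuses repeatedly afterwards — in Lemma~\ref{lem:esperance_hauteur} to bound $\bE_{(0,0)}[M_{T_{\square}}]$, in the dominated-convergence step of Lemma~\ref{lem:relation_expectations}, and in Lemmas~\ref{lem:TN_inf_T_square}--\ref{lem:Tp_inf_T_square} for the discretization coupling — so it serves as infrastructure for the rest of Section~\ref{sec:toy_model}, which an expectation-only bound cannot replace.
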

\begin{proof}
In order to prove this, we first notice that $T_{\square}$ is equal to the sum of the time needed to exit $(0,0)$, which follows an exponential distribution with parameter~$2$, and of the time needed to reach a state in $\mathcal{S}_{\square\!\square}$ starting from $(0,1)$.

The first step yields an expected contribution of $1/2$ to $\bE_{(0,0)}[T_{\square}]$.
Once the process is in the state $(0,1)$, assuming without loss of generality that the cube fell on the right pile (which is now the highest pile), we assign an exponential clock with parameter~$1$ to the highest cube of the right pile, which rings whenever the cube attempts to grow sideways. Whenever the highest cube changes, the exponential clock is assigned to the new highest cube. When the clock rings for the first time, we distinguish two cases:
\begin{itemize}
\item Either there is no cube at the same height in the other pile, and the cube can grow sideways. Then, we are back to a state in $\mathcal{S}_{\square\!\square}$ (though perhaps not for the first time if the left pile grew in the meantime).
\item Either there is already a cube at the same height in the left pile. Then, we know we already went back to a state in $\mathcal{S}_{\square\!\square}$.
\end{itemize}
Therefore,
\begin{equation*}
\bE_{(0,0)}\left[T_{\square}\right] \leq \frac12 +1 = \frac32.
\end{equation*}
\end{proof}

We can use this lemma to obtain an upper bound on the expected heights $\bE_{(0,0)}[M_{T_{\square}}]$ and $\bE_{(0,0)}[m_{T_{\square}}]$ of the highest and lowest pile at time~$T_{\square}$.

\begin{lem}\label{lem:esperance_hauteur} We have
\begin{equation*}
\bE_{(0,0)}\left[M_{T_{\square}}\right] \leq 3 \quad \text{and} \quad \bE_{(0,0)}\left[m_{T_{\square}}\right] \leq 3.
\end{equation*}
\end{lem}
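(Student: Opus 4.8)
The plan is to exploit the fact that, although the total jump rate of the $2$-CGP is unbounded (from an off-diagonal state $(i,j)$ it equals $j-i+2$), the rate at which the \emph{maximal} height $M_t$ increases is uniformly bounded by $2$. Inspecting the transitions in Definition~\ref{defn:2_cgp}, the only transition that raises the second coordinate is $(i,j) \to (i,j+1)$, which occurs at rate $2$ when $i=j$ and at rate $1$ when $j \geq i+1$; every other transition leaves $M$ unchanged and merely raises $m$. Writing $f(i,j) = j$ for the max coordinate, the generator $\mathcal{L}$ of the chain therefore satisfies $\mathcal{L}f \equiv 2$ on $\mathcal{S}_{\square\!\square}$ and $\mathcal{L}f \equiv 1$ off the diagonal, so that $0 \le \mathcal{L}f \le 2$ everywhere.

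First I would write down Dynkin's formula for $f$ stopped at $t \wedge T_{\square}$. Since $\mathcal{L}f$ is bounded by $2$ along the trajectory and $t \wedge T_{\square}$ is a bounded stopping time, the compensated process $M_{s \wedge T_{\square}} - \int_0^{s \wedge T_{\square}} \mathcal{L}f(G_u)\,du$ is a martingale started from $f(0,0) = 0$, whence
\[
\bE_{(0,0)}\big[M_{t \wedge T_{\square}}\big] = \bE_{(0,0)}\Big[\int_0^{t \wedge T_{\square}} \mathcal{L}f(G_u)\,du\Big] \le 2\,\bE_{(0,0)}[t \wedge T_{\square}] \le 2\,\bE_{(0,0)}[T_{\square}].
\]
Letting $t \to +\infty$ and using that $s \mapsto M_s$ is nondecreasing (so $M_{t \wedge T_{\square}} \uparrow M_{T_{\square}}$), monotone convergence gives $\bE_{(0,0)}[M_{T_{\square}}] \le 2\,\bE_{(0,0)}[T_{\square}]$. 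Plugging in the bound $\bE_{(0,0)}[T_{\square}] \le 3/2$ from Lemma~\ref{lem:esperance_T_square} yields $\bE_{(0,0)}[M_{T_{\square}}] \le 3$. Finally, since $m_s \le M_s$ for all $s$ by definition of $\widetilde{\mathcal{S}}$ (and in fact $m_{T_{\square}} = M_{T_{\square}}$ because $G_{T_{\square}} \in \mathcal{S}_{\square\!\square}$), the bound $\bE_{(0,0)}[m_{T_{\square}}] \le 3$ follows at once.

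The only delicate point is justifying the localized Dynkin/martingale identity: one must ensure the compensated process is a genuine martingale despite the unbounded total jump rate of the chain. This is handled by the localization at $t \wedge T_{\square}$ together with the uniform bound $\mathcal{L}f \le 2$, the unboundedness of the jump rate entering only through transitions that leave $f$ invariant and hence being harmless here; alternatively one could dominate the $M$-component directly by a rate-$2$ Poisson process and compare. I expect this integrability bookkeeping to be the main — though essentially routine — obstacle, the remainder of the argument being immediate once the uniform bound $\mathcal{L}f\le 2$ is recognized.
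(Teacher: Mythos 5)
Your proof is correct, but it takes a genuinely different route from the paper's. The paper argues by direct stochastic domination: writing $\widetilde{T}$ for the exit time of $(0,0)$, it uses the sideways-growth clock $T_{\to} \sim \mathcal{E}\text{xp}(1)$ constructed in the proof of Lemma~\ref{lem:esperance_T_square} to get $T_{\square} \leq \widetilde{T} + T_{\to}$, then bounds $M_{\widetilde{T}+T_{\to}} - M_{\widetilde{T}}$, conditionally on $T_{\to}$, by a Poisson variable with parameter $2T_{\to}$, yielding $\bE_{(0,0)}[M_{T_{\square}}] \leq 1 + \bE[2T_{\to}] = 3$; notably it uses only the clock construction from Lemma~\ref{lem:esperance_T_square}, not its conclusion. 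You instead observe that $M$ is a counting process whose intensity never exceeds $2$, derive the inequality $\bE_{(0,0)}[M_{T_{\square}}] \leq 2\,\bE_{(0,0)}[T_{\square}]$ via optional stopping and monotone convergence (note that $T_{\square} < \infty$ a.s.\ by Lemma~\ref{lem:esperance_T_square}, so the monotone limit is indeed $M_{T_{\square}}$), and then plug in the bound $\bE_{(0,0)}[T_{\square}] \leq 3/2$. Both routes give the constant $3$. One point you correctly flag but should handle explicitly: the martingale problem of Lemma~\ref{lem:martingale_prop} is stated for \emph{bounded} $f$ only, so it cannot be cited for $f(i,j) = j$; your repair --- the unbounded jump rates enter only through transitions leaving $M$ invariant, and $M$ itself is dominated by a rate-$2$ Poisson process, making the compensated process a true martingale --- is exactly right, and it parallels the truncation device $f_{d}(i,j) = j\un{j<d}$ that the paper itself deploys later in Lemma~\ref{lem:relation_expectations}. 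Your approach buys modularity: the reusable inequality $\bE[M_{T_{\square}}] \leq 2\bE[T_{\square}]$ is a one-sided cousin of the exact identity $\bE[M_{T_{\square}}] = 1/2 + \bE[T_{\square}]$ proved later, so your lemma-to-lemma reduction anticipates the structure of the rest of the section. The paper's approach buys elementarity: it never has to confront unbounded test functions or the unbounded total jump rate at all.
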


\begin{proof}
Since $G_{T_{\square}} = (M_{T_{\square}},m_{T_{\square}}) \in \mathcal{S}_{\square\!\square}$, we have $M_{T_{\square}} = m_{T_{\square}}$, and thus it is sufficient to provide an upper bound on $\bE_{(0,0)}[M_{T_{\square}}]$ to conclude. Let us set
\begin{equation*}
\widetilde{T} := \min\left\{t \geq 0 : M_{t} \neq m_{t}\right\},
\end{equation*}
which corresponds to the time needed for the process to leave the state $(0,0)$. For all $t \in [\widetilde{T},T_{\square})$, we have $M_{t} > m_{t}$, and the lowest pile does not contribute to the growth of the highest pile. Therefore, over the time interval $[\widetilde{T},T_{\square})$, the only way the highest pile can grow is by new cubes falling on top of it, whose total number is given by $M_{T_{\square}} - M_{\widetilde{T}} = M_{T_{\square}} - 1$. 
Moreover, let~$T_{\to} \sim \mathcal{E}\text{xp}(1)$ be the exponential clock associated to sideways growth of the highest pile after time~$\widetilde{T}$, as in Lemma~\ref{lem:esperance_T_square} (if the two piles have the same height, then~$T_{\to}$ is arbitrarily assigned to the pile which was the highest previously). As $T_{\square} \leq \widetilde{T} + T_{\to}$ and as~$t \to M_{t}$ is non-decreasing, 
\begin{align*}
\bE_{(0,0)}\left[M_{T_{\square}}\right] &= \bE_{(0,0)}\big[M_{\widetilde{T}}\big] + \bE_{(0,0)}\big[
M_{T_{\square}}-M_{\widetilde{T}}\big] \\
&\leq 1 + \bE_{(0,0)} \left[ 
M_{\widetilde{T} + T_{\to}} - M_{\widetilde{T}}
\right].
\end{align*}
Moreover, conditionally on~$T_{\to}$, $M_{\widetilde{T}+T_{\to}}  - M_{\widetilde{T}}$ is bounded from above by a Poisson-distributed random variable with parameter~$2T_{\to}$. Indeed, $T_{\to}$ is independent of the clocks responsible for upwards growth of each of the two piles or for sideways growth below the top of the highest pile. The factor~$2$ accounts for the fact that the two piles can have the same height at some point before time~$\widetilde{T} + T_{\to}$. From these observations, we conclude
\begin{equation*}
\bE_{(0,0)} \left[ 
M_{T_{\square}}
\right] \leq 1 + \bE_{(0,0)}\left[2T_{\to}\right] = 3. 
\end{equation*}
\end{proof}

In order to study the speed of growth of the process, we see the 2-CGP as a cumulative process, in the sense of \cite[Chapter~VI]{asmussen2008applied}. This point of view allows us to prove the following.
\begin{proposition}
    \label{prop:speed_of_growth} We have
\begin{equation*}
\lim\limits_{t \to + \infty} \frac{M_{t}}{t} = \lim\limits_{t \to + \infty} \frac{m_{t}}{t} = \frac{\bE_{(0,0)}\left[M_{T_{\square}}\right]}{\bE_{(0,0)}\left[T_{\square}\right]} \text{ a.s.}
\end{equation*}
\end{proposition}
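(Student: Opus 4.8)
The plan is to exploit the regenerative structure of the 2-CGP at the reset times described in Remark~\ref{rem:reset_process} and apply the renewal--reward theorem in the form given in \cite[Chapter~VI]{asmussen2008applied}. First I would define the successive return times to the diagonal: set $\theta_0 := 0$ and, for $k \geq 1$, let $\theta_k$ be the $k$-th time at which $(G_t)_{t\geq 0}$ returns to $\mathcal{S}_{\square\!\square}$ after having left it, so that $\theta_1 \overset{d}{=} T_\square$. The crucial observation is that the transition rates in Definition~\ref{defn:2_cgp} depend on $(i,j)$ only through the gap $j-i$, i.e.\ they are invariant under the diagonal shift $(i,j)\mapsto(i+1,j+1)$. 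Combined with the strong Markov property applied at each $\theta_k$ (where $G_{\theta_k}\in\mathcal{S}_{\square\!\square}$), this shift-invariance makes the increments
$$(\theta_k - \theta_{k-1},\, M_{\theta_k}-M_{\theta_{k-1}}), \qquad k \geq 1,$$
independent and identically distributed, each with the law of $(T_\square, M_{T_\square})$ under $\bP_{(0,0)}$.

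Next I would check that both coordinates of these increments have finite, strictly positive first moment. Finiteness of $\bE_{(0,0)}[T_\square]$ and $\bE_{(0,0)}[M_{T_\square}]$ is exactly Lemmas~\ref{lem:esperance_T_square} and~\ref{lem:esperance_hauteur}; strict positivity of $\bE_{(0,0)}[T_\square]$ follows since the exit time from $(0,0)$ alone is $\mathcal{E}\mathrm{xp}(2)$, giving $\bE_{(0,0)}[T_\square]\geq 1/2 > 0$ (in particular $T_\square<\infty$ a.s., so the $\theta_k$ are well defined and finite). The strong law of large numbers then yields, almost surely,
$$\frac{M_{\theta_n}}{n} \xrightarrow[n\to\infty]{} \bE_{(0,0)}[M_{T_\square}], \qquad \frac{\theta_n}{n} \xrightarrow[n\to\infty]{} \bE_{(0,0)}[T_\square],$$
and hence $M_{\theta_n}/\theta_n \to \bE_{(0,0)}[M_{T_\square}]/\bE_{(0,0)}[T_\square]$ a.s.

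To pass from the embedded sequence $(\theta_n)$ to continuous time I would use the monotonicity of $t\mapsto M_t$. Writing $N_t := \sup\{n\geq 0 : \theta_n \leq t\}$ for the number of completed cycles, we have $\theta_{N_t}\leq t < \theta_{N_t+1}$ and, since $M$ is non-decreasing, $M_{\theta_{N_t}}\leq M_t \leq M_{\theta_{N_t+1}}$. Because $N_t\to\infty$ a.s., the decompositions
$$\frac{M_{\theta_{N_t}}}{\theta_{N_t+1}} = \frac{M_{\theta_{N_t}}}{N_t}\cdot\frac{N_t}{N_t+1}\cdot\frac{N_t+1}{\theta_{N_t+1}} \quad\text{and}\quad \frac{M_{\theta_{N_t+1}}}{\theta_{N_t}} = \frac{M_{\theta_{N_t+1}}}{N_t+1}\cdot\frac{N_t+1}{N_t}\cdot\frac{N_t}{\theta_{N_t}}$$
show that both the lower bound $M_{\theta_{N_t}}/\theta_{N_t+1}\leq M_t/t$ and the upper bound $M_t/t\leq M_{\theta_{N_t+1}}/\theta_{N_t}$ converge a.s.\ to the common limit $\bE_{(0,0)}[M_{T_\square}]/\bE_{(0,0)}[T_\square]$, giving the claim for $M_t$. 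For $m_t$, I would note that $m$ is also non-decreasing and that $m_{\theta_{N_t}} = M_{\theta_{N_t}}$ because $G_{\theta_{N_t}}\in\mathcal{S}_{\square\!\square}$; the sandwich $M_{\theta_{N_t}} = m_{\theta_{N_t}} \leq m_t \leq M_t$ together with the already-established convergence of $M_{\theta_{N_t}}/t$ and $M_t/t$ to the common limit finishes the proof.

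The step I expect to be the main obstacle is the rigorous justification of the i.i.d.\ cycle decomposition, i.e.\ verifying that the diagonal shift-invariance of the rates, combined with the strong Markov property at the random, state-dependent return times $\theta_k$, really does render the pairs $(\theta_k-\theta_{k-1}, M_{\theta_k}-M_{\theta_{k-1}})$ independent and distributed as $(T_\square, M_{T_\square})$. Everything else (the SLLN, the monotone sandwich, and finiteness of the moments via the preceding lemmas) is routine once this regenerative structure is established.
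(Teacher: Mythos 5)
Your proof is correct and rests on the same regenerative strategy as the paper's: decompose the trajectory at the successive return times to $\mathcal{S}_{\square\!\square}$, observe that the cycle increments are i.i.d.\ copies of $(T_{\square}, M_{T_{\square}})$ under $\bP_{(0,0)}$, and control the moments via Lemmas~\ref{lem:esperance_T_square} and~\ref{lem:esperance_hauteur}. The only substantive difference is the final step: the paper invokes Theorem~3.1 of \cite[Chapter~VI]{asmussen2008applied} on cumulative processes, whose hypothesis $\bE_{(0,0)}\left[\max_{0 \leq t \leq T_{\square}} M_{t}\right] < +\infty$ reduces, by monotonicity of $t \mapsto M_{t}$, to exactly the bound of Lemma~\ref{lem:esperance_hauteur}; you instead re-derive the conclusion of that theorem by hand, applying the SLLN to the embedded sums $M_{\theta_{n}}$ and $\theta_{n}$ and sandwiching $M_{t}/t$ between $M_{\theta_{N_{t}}}/\theta_{N_{t}+1}$ and $M_{\theta_{N_{t}+1}}/\theta_{N_{t}}$. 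Your version is therefore self-contained where the paper's is shorter, and it buys a little extra transparency: you also spell out \emph{why} the cycles are i.i.d.\ (diagonal shift-invariance of the rates in Definition~\ref{defn:2_cgp} combined with the strong Markov property at the return times), a point the paper simply asserts --- so the step you flag as the ``main obstacle'' is precisely the one the paper treats as obvious, and your justification of it is the right one. One cosmetic remark: the almost sure finiteness of $T_{\square}$, which you need so that the $\theta_{k}$ are finite and $N_{t} \to \infty$, follows from $\bE_{(0,0)}[T_{\square}] \leq 3/2 < +\infty$ (Lemma~\ref{lem:esperance_T_square}) rather than from the lower bound $\bE_{(0,0)}[T_{\square}] \geq 1/2$ to which your parenthetical attaches it, but since you cite that lemma anyway this is a matter of phrasing, not a gap.
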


\begin{proof}
Let $T_{\square}^{0} = 0$, and for all $n \geq 1$, let $T_{\square}^{n}$ be the time of n-th return of $(G_{t})_{t \geq 0}$ to a state in $\mathcal{S}_{\square\!\square}$. For all $t > 0$, let $N(t) = \max\{n \in \nmath : T_{\square}^{n} \leq t\}$. 
Since $M_{T_{\square}^{n}} = m_{T_{\square}^{n}}$ for all $n\in\mathbb N$ and since $M_{T_{\square}^{0}} = m_{T_{\square}^{0}} = 0$, 
we can express $m_{t}$ and $M_{t}$ as
\begin{align*}
m_{t} &= \sum_{n = 1}^{N(t)} \big(M_{T_{\square}^{n}} - M_{T_{\square}^{n-1}}\big) + m_{t} - M_{T_{\square}^{N(t)}} \\
\text{and } M_{t} &= \sum_{n = 1}^{N(t)} \big(M_{T_{\square}^{n}} - M_{T_{\square}^{n-1}}\big)
+ M_{t} - M_{T_{\square}^{N(t)}}.
\end{align*}

Observe that the random variables $(M_{T_{\square}^{n}}-M_{T_{\square}^{n-1}})_{n \geq 1}$ are i.i.d. The same is true for the random variables $(T_{\square}^{n}-T_{\square}^{n-1})_{n \geq 1}$. 
To apply  Theorem~3.1 from \cite[Chapter~VI]{asmussen2008applied}, which gives the desired result, we need to 
show
\begin{equation}\label{eqn:major_1}
\bE_{(0,0)}\left[\max_{0 \leq t \leq T_{\square}}M_{t}\right] < + \infty\quad  \text{and}\quad \bE_{(0,0)}\left[\max_{0 \leq t \leq T_{\square}} m_{t}\right] < + \infty.
\end{equation}
As $t \mapsto M_{t}$ and $t \mapsto m_{t}$ are non-decreasing, this amounts to showing that $\bE_{(0,0)}\left[M_{T_{\square}}\right] < + \infty$ and $\bE_{(0,0)}[m_{T_{\square}}] < + \infty$, which is a direct consequence of Lemma~\ref{lem:esperance_hauteur}.
\end{proof}

In order to use this result and show Theorem~\ref{thm:increaseofspeed}, we need to express~$\bE_{(0,0)}[M_{T_{\square}}]$ as a function of~$\bE_{(0,0)}[T_{\square}]$. As a first step, we introduce the martingale problem satisfied by the 2-CGP, which we will use to obtain a relation between
$\bE_{(0,0)}[M_{T_{\square}}]$ and $\bE_{(0,0)}[T_{\square}]$.

Let $C_{b}(\widetilde{\mathcal{S}})$ be the space of bounded functions $f : \widetilde{\mathcal{S}} \to \mathbb{R}$. The generator $\gcal$ of the 2-CGP acting on functions $f \in C_{b}(\widetilde{\mathcal{S}})$ is defined as follows. For all $f \in C_{b}(\widetilde{\mathcal{S}})$ and for all $(i,j) \in \widetilde{\mathcal{S}})$,
\begin{align*}
&\gcal f(i,j) \\&= \un{i = j} \times 2\left(
f(i,i+1)-f(i,j)\right) \\
&+ \un{i+1 = j} \times \left[
2(f(i+1,j)-f(i,j)) + f(i,j+1)-f(i,j)\right] \\
&+ \un{j \geq i+2} \\
&\qquad \times \left[
2(f(i+1,j)-f(i,j)) + f(i,j+1)-f(i,j) + \sum_{k = 2}^{j-i} (f(i+k,j) - f(i,j))
\right].
\end{align*}
The 2-CGP is then a solution to the following martingale problem.

\begin{lem}\label{lem:martingale_prop}
Let $(G_{t})_{t \geq 0} = (m_{t},M_{t})_{t \geq 0}$ be the 2-CGP.
Then, for all $f \in C_{b}(\widetilde{\mathcal{S}})$,
\begin{equation*}
\left(f(G_{t}) - f((0,0)) - \int_{0}^{t} \gcal f(G_{s})ds\right)_{t \geq 0}
\end{equation*}
is a martingale.
\end{lem}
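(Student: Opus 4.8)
The plan is to prove that the 2-CGP solves the stated martingale problem by identifying it as a standard continuous-time Markov chain with bounded jump rates and appealing to the general theory of Markov jump processes (Dynkin's formula / the martingale characterization of the generator). The key observation making this routine is that although the state space $\widetilde{\mathcal{S}}$ is countably infinite and the total jump rate out of a state $(i,j)$ grows linearly in $j-i$ (since the highest pile has $j-i$ possible sideways-growth targets each firing at rate $1$, plus the two upward rates), the process $M_t$ is stochastically dominated and does not explode in finite time, so the standard theory applies.

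First I would recall the abstract fact: for a continuous-time Markov chain with generator $\gcal$ acting on a suitable domain, and for $f \in C_b(\widetilde{\mathcal{S}})$, the process $f(G_t) - f(G_0) - \int_0^t \gcal f(G_s)\,ds$ is a local martingale, and it is a genuine martingale provided one has enough integrability to justify taking expectations and applying the optional stopping / dominated convergence arguments. Concretely, I would verify the three ingredients: (i) $\gcal f$ as written above is exactly the infinitesimal generator read off from the transition rates in Definition~\ref{defn:2_cgp}, which is a direct term-by-term match of the rates $2$, $1$, $2$, $1$; (ii) non-explosion of the chain, which I would establish by bounding $M_t$ from above by a pure-birth process whose rate is linear in the height — or more simply by noting $M_t$ increases by at most the total number of events, and the total event rate out of $(i,j)$ is $2 + (j-i)$, so that $M_t - m_t$ behaves like a birth–death chain that returns to $0$ and cannot drive infinitely many jumps in finite time; (iii) the integrability needed so that the local martingale is a true martingale, using that $f$ is bounded and hence $|f(G_t) - f(G_0)| \le 2\|f\|_\infty$ while $\gcal f$ is bounded by $(4 + (j-i))\|f\|_\infty$ pointwise.

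The main obstacle is the unboundedness of the total jump rate as $j-i \to \infty$, which means $\gcal$ is \emph{not} a bounded operator on $C_b(\widetilde{\mathcal{S}})$ and so the quickest off-the-shelf statement (bounded generator $\Rightarrow$ martingale for all bounded $f$) does not literally apply. I would address this by a localization argument: introduce the stopping times $\tau_K := \inf\{t \ge 0 : M_t - m_t \ge K\}$, observe that on the event $\{t < \tau_K\}$ the process lives in a region where the jump rate is bounded by $2 + K$, so the stopped process $f(G_{t \wedge \tau_K}) - f(G_0) - \int_0^{t\wedge\tau_K}\gcal f(G_s)\,ds$ is a martingale by the bounded-rate theory, and then let $K \to \infty$. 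To pass to the limit I would argue $\tau_K \to \infty$ almost surely (non-explosion) and use that $f$ is bounded together with an integrable domination of $\int_0^{t}|\gcal f(G_s)|\,ds$ — the latter following from a uniform-in-$K$ bound on $\esp_{(0,0)}\!\big[\int_0^{t}(j-i)\,\text{(at }G_s)\,ds\big]$, which is finite because the gap $M_s - m_s$ is a positive-recurrent excursion process with finite mean.

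Alternatively, and perhaps more cleanly for the paper, I would simply cite the standard result (e.g.\ Ethier–Kurtz, or a reference on Markov jump processes) that a non-explosive conservative Markov jump process with the given $q$-matrix solves the associated martingale problem for all bounded $f$, and relegate the non-explosion verification to the one-line domination argument above. Given that the subsequent use of this lemma (in relating $\esp_{(0,0)}[M_{T_\square}]$ to $\esp_{(0,0)}[T_\square]$ via optional stopping at $T_\square$) only requires applying the martingale property up to the bounded-expectation stopping time $T_\square$ — for which $\esp_{(0,0)}[T_\square] \le 3/2$ and $\esp_{(0,0)}[M_{T_\square}] \le 3$ are already available from Lemmas~\ref{lem:esperance_T_square} and~\ref{lem:esperance_hauteur} — the integrability hypotheses are automatically met on the relevant stopped process, so the localization can be carried out specifically up to $T_\square$ rather than for all time.
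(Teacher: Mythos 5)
Your proposal is correct, but there is nothing in the paper to compare it against: the paper states Lemma~\ref{lem:martingale_prop} without any proof, implicitly treating it as a standard consequence of the theory of continuous-time Markov chains. Your argument supplies exactly the details that make this rigorous, and the structure is the right one: (i) the term-by-term identification of $\gcal$ with the rates of Definition~\ref{defn:2_cgp}; (ii) non-explosion; (iii) localization at $\tau_K = \inf\{t : M_t - m_t \ge K\}$ followed by dominated convergence, which is needed precisely because the total jump rate $(M_t - m_t) + 2$ is unbounded, so the bounded-generator theorem does not apply off the shelf. One remark on the soft spot in step (iii): your appeal to the gap $M_s - m_s$ being ``a positive-recurrent excursion process with finite mean'' is both more than you need and not yet proved at that point in the paper; the cleaner observation is that the gap increases only through the transition $(i,j)\to(i,j+1)$, at rate at most $2$, so $M_s - m_s$ (and indeed $M_s$ itself) is stochastically dominated by a Poisson process of rate $2$, giving $\esp[M_s - m_s] \le 2s$ and hence $\esp\big[\int_0^t |\gcal f(G_s)|\,ds\big] \le 2\|f\|_\infty \int_0^t (2s+2)\,ds < \infty$ directly; the same domination shows the jump count is a.s.\ finite on compact time intervals (each jump increases either $M$ or $m$, and $m \le M$), which settles non-explosion as well. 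Your final observation—that the paper only ever uses the lemma through optional stopping at $T_\square \wedge T_d$, where the integrability is already guaranteed by Lemmas~\ref{lem:esperance_T_square} and~\ref{lem:esperance_hauteur}—is a legitimate shortcut that would let the paper get away with a weaker, stopped version of the lemma.
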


We use the martingale problem with functions of the form $f_{d} : (i,j) \mapsto j \un{j < d}$, $d \in \mathbb{N}\setminus \{0,1\}$. Indeed, for all $d \in \mathbb{N}\setminus \{0,1\}$ and $s \geq 0$, if $G_{s} = (m_{s},M_{s})$ is such that $M_{s}-m_{s} \geq 1$, then
\begin{equation}
\gcal f_{d}(G_{s}) = \un{M_{s} < d-1} - \un{M_{s} = d-1}(d-1) \label{eqn:generateur_1}
\end{equation}
and if $m_{s} = M_{s}$, then
\begin{equation}
\gcal f_{d}(G_{s}) = 2 \un{M_{s} < d-1} - 2(d-1)\un{M_{s} = d-1}. \label{eqn:generateur_2}
\end{equation}
We obtain the following result.

\begin{lem}\label{lem:relation_expectations}
We have
\begin{equation*}
\bE_{(0,0)}[M_{T_{\square}}] = \frac{1}{2} + \bE_{(0,0)}[T_{\square}].
\end{equation*}
\end{lem}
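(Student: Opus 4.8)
The plan is to feed the martingale problem of Lemma~\ref{lem:martingale_prop} the truncated test functions $f_{d}(i,j) = j\un{j<d}$, $d \in \nmath\setminus\{0,1\}$, apply optional stopping at $T_{\square}$, and then let $d\to+\infty$. The point of the truncation is that the natural observable $f(i,j)=j$ is unbounded and hence inadmissible in $C_{b}(\widetilde{\mathcal{S}})$, whereas $f_{d}$ is bounded (by $d$) and coincides with $j$ on the states visited before $T_{\square}$ once $d$ is large; passing to the limit recovers the identity for the unbounded quantity $M_{t}$. Morally, one is computing Dynkin's formula $\bE_{(0,0)}[M_{T_{\square}}]=\bE_{(0,0)}[\int_{0}^{T_{\square}}\gcal f(G_{s})\,ds]$ for $f(i,j)=j$, where $\gcal f \equiv 2$ on the diagonal and $\equiv 1$ off it.

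First I would fix $d$ and justify optional stopping. Since $f_{d}(0,0)=0$, Lemma~\ref{lem:martingale_prop} makes $f_{d}(G_{t})-\int_{0}^{t}\gcal f_{d}(G_{s})\,ds$ a martingale, so $\bE_{(0,0)}[f_{d}(G_{t\wedge T_{\square}})]=\bE_{(0,0)}[\int_{0}^{t\wedge T_{\square}}\gcal f_{d}(G_{s})\,ds]$ for every $t\geq 0$, the stopping time $t\wedge T_{\square}$ being bounded. Using $T_{\square}<+\infty$ a.s. and $\bE_{(0,0)}[T_{\square}]\leq 3/2$ from Lemma~\ref{lem:esperance_T_square}, together with $|f_{d}|\leq d$ and $|\gcal f_{d}|\leq 2(d-1)$ (so that $|\int_{0}^{t\wedge T_{\square}}\gcal f_{d}\,ds|\leq 2(d-1)T_{\square}$ is integrable), dominated convergence lets me send $t\to+\infty$ and obtain $\bE_{(0,0)}[f_{d}(G_{T_{\square}})]=\bE_{(0,0)}[\int_{0}^{T_{\square}}\gcal f_{d}(G_{s})\,ds]$.

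Next I would identify both sides as $d\to+\infty$. On the left, $G_{T_{\square}}\in\mathcal{S}_{\square\!\square}$ gives $f_{d}(G_{T_{\square}})=M_{T_{\square}}\un{M_{T_{\square}}<d}$, which increases to $M_{T_{\square}}$, so by monotone convergence and Lemma~\ref{lem:esperance_hauteur} the left-hand side tends to the finite value $\bE_{(0,0)}[M_{T_{\square}}]$. On the right, I would split $[0,T_{\square})$ at $\widetilde{T}:=\min\{t:M_{t}\neq m_{t}\}\sim\mathcal{E}\text{xp}(2)$: on $[0,\widetilde{T})$ the process sits at $(0,0)$, so by \eqref{eqn:generateur_2} the integrand equals $2$ and contributes $2\widetilde{T}$; on $[\widetilde{T},T_{\square})$ the process is off-diagonal, so by \eqref{eqn:generateur_1} the integrand equals $\un{M_{s}<d-1}-(d-1)\un{M_{s}=d-1}$.

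The main obstacle — indeed the only delicate analytic point — is controlling the negative term $(d-1)\bE_{(0,0)}[\int_{\widetilde{T}}^{T_{\square}}\un{M_{s}=d-1}\,ds]$. I would bound it by observing that, within the excursion, the sojourn time of $M$ at any fixed level is dominated by an $\mathcal{E}\text{xp}(1)$ variable, since $M$ leaves a level only by upward growth at rate $1$; hence this expectation is at most $(d-1)\,\bP_{(0,0)}[M_{T_{\square}}\geq d-1]$, which tends to $0$ because $\bE_{(0,0)}[M_{T_{\square}}]<+\infty$ forces $k\,\bP_{(0,0)}[M_{T_{\square}}\geq k]\to 0$. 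The positive term $\bE_{(0,0)}[\int_{\widetilde{T}}^{T_{\square}}\un{M_{s}<d-1}\,ds]$ increases to $\bE_{(0,0)}[T_{\square}-\widetilde{T}]$ by monotone convergence. Collecting the three contributions, the right-hand side converges to $2\bE[\widetilde{T}]+\bE_{(0,0)}[T_{\square}-\widetilde{T}]=\tfrac12+\bE_{(0,0)}[T_{\square}]$, using $\bE[\widetilde{T}]=1/2$, which matches the left-hand limit and yields the claim.
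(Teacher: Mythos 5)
Your proof is correct, and it is a genuine variant of the paper's argument rather than a reproduction of it. The common skeleton is the same: the martingale problem of Lemma~\ref{lem:martingale_prop} applied to the truncated test functions $f_{d}$, optional stopping, the split of the excursion at the exit time of $(0,0)$ (your $\widetilde{T}$, the paper's $T_{1}$), and a limit $d \to +\infty$ resting on the a priori bounds of Lemmas~\ref{lem:esperance_T_square} and~\ref{lem:esperance_hauteur}. The difference is where the truncation error lands. The paper truncates twice: it applies the martingale to $f_{d+2}$ at the stopped time $T_{\square} \wedge T_{d}$, with $T_{d} = \inf\{t \geq 0 : M_{t} \geq d\}$, so that the negative boundary term of the generator never activates along the path (since $M$ increases by unit steps, $M_{s} \leq d < d+1$ before the stopping time); the price is establishing the four limits \eqref{eqn:limit_1}--\eqref{eqn:limit_4}, i.e., showing that all contributions from the event $\{T_{d} < T_{\square}\}$ vanish, which requires a Poisson-tail estimate for $\bP_{(0,0)}(T_{\square} > T_{d})$. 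You instead stop at $T_{\square}$ itself and therefore must kill the boundary term $(d-1)\,\bE_{(0,0)}\big[\int_{\widetilde{T}}^{T_{\square}} \un{M_{s} = d-1}\,ds\big]$ directly; your sojourn-time bound (off the diagonal, $M$ leaves a level only through the rate-$1$ upward growth of the highest pile, so the expected sojourn at level $d-1$ before $T_{\square}$, on the event of reaching that level, is at most $1$) reduces it to $(d-1)\,\bP_{(0,0)}(M_{T_{\square}} \geq d-1) \leq \bE_{(0,0)}\big[M_{T_{\square}} \un{M_{T_{\square}} \geq d-1}\big] \to 0$, which needs only the finiteness of $\bE_{(0,0)}[M_{T_{\square}}]$ from Lemma~\ref{lem:esperance_hauteur}. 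What each route buys: the paper's double truncation keeps everything bounded pathwise, so only soft dominated-convergence arguments remain once the tail of $T_{d}$ against $T_{\square}$ is controlled; your single truncation is shorter and avoids that bookkeeping, at the cost of one strong-Markov/sojourn estimate, which is at the same level of rigour as the exponential-clock arguments the paper itself employs in Lemmas~\ref{lem:esperance_T_square} and~\ref{lem:esperance_hauteur}. Your justification of optional stopping (at $t \wedge T_{\square}$, then $t \to +\infty$ by dominated convergence using $|f_{d}| \leq d$, $|\gcal f_{d}| \leq 2(d-1)$ and $\bE_{(0,0)}[T_{\square}] < +\infty$) and the monotone-convergence passages on both sides are sound.
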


\begin{proof}
For all $d \in \nmath \setminus \{0,1\}$, let $T_{d} := \inf\{t \geq 0 : M_{t} \geq d\}$. We use the martingale problem stated in Lemma~\ref{lem:martingale_prop} with the function $f_{d} : (i,j) \mapsto j \un{j < d}$, $d \in \nmath \setminus \{0,1\}$, and the stopping time $T_{\square} \wedge T_{d}$. We obtain
\begin{align*}
\bE_{(0,0)}\left[ M_{T_{\square} \wedge T_{d}}\right] &= \bE_{(0,0)}\left[M_{T_{\square} \wedge T_{d}} \un{M_{T_{\square} \wedge T_{d} < d+2}} \right] \\
&= 0 + \bE_{(0,0)}\left[\int_{0}^{T_{\square} \wedge T_{d}} \gcal f_{d+2} (G_{s})ds\right] \\
&= \bE_{(0,0)}\left[\int_{0}^{T_{1}} \gcal f_{d+2} (G_{s})ds\right]
+ \bE_{(0,0)}\left[\int_{T_{1}}^{T_{\square} \wedge T_{d}} \gcal f_{d+2} (G_{s})ds \right].
\end{align*}

By \eqref{eqn:generateur_2} and \eqref{eqn:generateur_1} this is equal to
\begin{align*}
    &= \bE_{(0,0)}\left[2T_{1}\right] + \bE_{(0,0)}\left[T_{\square} \wedge T_{d} - T_{1} \right] \\
&= \bE_{(0,0)}\left[T_{1}\right] + \bE_{(0,0)}\left[T_{\square} \un{T_{\square} \leq T_{d}}\right]
+ \bE_{(0,0)}\left[T_{d} \un{T_{d} < T_{\square}}\right] \\
&= \frac{1}{2} + \bE_{(0,0)}\left[T_{\square} \un{T_{\square} \leq T_{d}}\right]
+ \bE_{(0,0)}\left[T_{d} \un{T_{d} < T_{\square}}\right].
\end{align*}
Moreover 
\begin{equation*}
\bE_{(0,0)}\left[M_{T_{\square} \wedge T_{d}}\right] = \bE_{(0,0)}\left[M_{T_{\square}} \un{T_{\square} \leq T_{d}}\right] + \bE_{(0,0)}\left[d \un{T_{\square} > T_{d}}\right].
\end{equation*}
Therefore, if we show that
\begin{align}
\lim\limits_{d \to + \infty} \bE_{(0,0)}\left[T_{\square} \un{T_{\square} \leq T_{d}}\right] &= \bE_{(0,0)}\left[T_{\square}\right], \label{eqn:limit_1} \\
\lim\limits_{d \to + \infty} \bE_{(0,0)}\left[T_{d}\un{T_{d} < T_{\square}}\right] &= 0, \label{eqn:limit_2} \\
\lim\limits_{d \to + \infty} \bE_{(0,0)}\left[M_{T_{\square}} \un{T_{\square} \leq T_{d}} \right] &= \bE_{(0,0)}\left[M_{T_{\square}}\right], \label{eqn:limit_3} \\
\text{and} \quad \lim\limits_{d \to + \infty} \bE_{(0,0)}\left[M_{T_{d}} \un{T_{d} < T_{\square}} \right] &= 0, \label{eqn:limit_4}
\end{align}
then we will be able to conclude.

In order to do so, recall that in order to come back to the set $\mathcal{S}_{\square\!\square}$, first the process needs to leave the state~$(0,0)$, which occurs at time $T_{1}\sim Exp(2)$. Without loss of generality, we assume that the first cube falls on the right pile. As in the proof of Lemma~\ref{lem:esperance_T_square}, we assign an exponential clock $T_{\to}\sim Exp(1)$ to the highest cube of the right pile, and move this exponential clock to the new highest cube in this pile whenever it grows. Then, reasoning as in the proof of Lemma~\ref{lem:esperance_T_square}, we have $T_{\square} \leq T_{\to} + T_{1}$. Moreover, $M_{T_{\square}} \leq d$ if, and only if at most $d-1$ cubes fall on the right pile over the time interval $[T_{1},T_{\square}]$. Therefore, if $\mathcal{P}(\lambda)$, $\lambda > 0$ stands for a Poisson random variable with parameter~$\lambda$,
\begin{align*}
\bP_{(0,0)}(T_{\square} > T_{d}) &\leq \bP(\pcal(T_{\to}) \geq d-1) \\
&\leq \int_{0}^{\infty} e^{-t} \bP(\pcal(t) \geq d-1) dt \\
&\leq \int_{0}^{\infty} e^{-t} \frac{\bE[\pcal(t)]}{d-1}dt \\
&\leq \frac{1}{d-1} \int_{0}^{\infty} t e^{-t} dt \\
&\xrightarrow[d \to + \infty]{} 0.
\end{align*}
Therefore, by the dominated convergence theorem and Lemmas~\ref{lem:esperance_T_square} and~\ref{lem:esperance_hauteur}, we can write
\[
\lim\limits_{d \to + \infty} \bE_{(0,0)}\left[T_{\square} \un{T_{d} < T_{\square}}\right] = 0 \quad \text{and} \quad \lim\limits_{d \to + \infty} \bE_{(0,0)}\left[M_{T_{\square}} \un{T_{d} < T_{\square}}\right] =0,
\]
from which we deduce \eqref{eqn:limit_1} and \eqref{eqn:limit_3}. Moreover,
\begin{equation*}
\bE_{(0,0)}\left[T_{d} \un{T_{d} < T_{\square}}\right] \leq \bE_{(0,0)}\left[T_{\square} \un{T_{d} < T_{\square}}
\right],
\end{equation*}
giving \eqref{eqn:limit_2}, and as $t \to M_{t}$ is non-decreasing,
\begin{equation*}
\bE_{(0,0)}\left[M_{T_{d}}\un{T_{d} < T_{\square}}\right] \leq \bE_{(0,0)}\left[M_{T_{\square}} \un{T_{d} < T_{\square}}\right],
\end{equation*}
allowing us to conclude, again by a dominated convergence argument.
\end{proof}

Using this result together with Proposition~\ref{prop:speed_of_growth}, we obtain the expression for the speed of growth of the process appearing in Theorem~\ref{thm:increaseofspeed}.

\begin{proof}[Proof of Theorem~\ref{thm:increaseofspeed}]
By Proposition~\ref{prop:speed_of_growth} we have
\begin{equation*}
\lim\limits_{t \to + \infty} \frac{M_{t}}{t} = \frac{\bE_{(0,0)}\left[
M_{T_{\square}}\right]}{\bE_{(0,0)}\left[
T_{\square}\right]} \text{ a.s.}
\end{equation*}
and by Lemma~\ref{lem:relation_expectations}
\begin{align*}
\frac{\bE_{(0,0)}\left[M_{T_{\square}}\right]}{\bE_{(0,0)}\left[T_{\square}\right]} &= \frac{1/2 + \bE_{(0,0)}\left[T_{\square}\right]}{\bE_{(0,0)}\left[T_{\square}\right]} = 1 + \frac{1}{2\bE_{(0,0)}\left[T_{\square}\right]}.
\end{align*}
By Lemma~\ref{lem:esperance_T_square}, $\bE_{(0,0)}\left[T_{\square}\right] \leq 3/2$ and so
\begin{equation*}
1+\frac{1}{2\bE_{(0,0)}\left[T_{\square}\right]} \geq 1 + 1/3 = 4/3.
\end{equation*}
Moreover, as $T_{\square} \geq T_{1}$ and as $T_{1}\sim Exp(2)$, we have $\bE_{(0,0)}\left[T_{\square}\right] \geq 1/2$. Therefore,
\begin{equation*}
1 + \frac{1}{2\bE_{(0,0)}\left[T_{\square}\right]} \leq 1+1 = 2,
\end{equation*}
allowing us to conclude.
\end{proof}

\subsection{The discretised two columns growth process: Preparation for the proof of Theorem~\ref{theorem:expectationapproxA}}
The main obstacle to the study of the speed of growth of the 2-CGP lies in the fact that the process does not jump at a constant rate: the bigger the height difference between the two columns, the faster the process jumps. In order to circumvent this problem, we now introduce a discretised version of the two columns growth process. Then, we explain how to couple it to the 2-CGP, and how we can use its invariant distribution to obtain an approximation for $\bE_{(0,0)}[T_{\square}]$, from which we will deduce an approximation of the speed of growth by Theorem~\ref{thm:increaseofspeed}. Recall that $(G_{t})_{t \geq 0} = (m_{t},M_{t})_{t \geq 0}$ is the 2-CGP with initial condition $(0,0)$, that $T_{\square}$ is the time of first return of $(G_{t})_{t \geq 0}$ to a state in $\mathcal{S}_{\square \! \square}$, and that for all $d \in \nmath \setminus \{0,1\}$, $T_{d} = \inf\{t \geq 0 : M_{t} \geq d\}$.

Moreover, let $N \in \nmath \setminus \{0,1\}$ and $\epsilon > 0$. In order to construct the discretised 2-CGP, we make the following observation. If $t \geq 0$, then the probability that the process $(G_{t})_{t \geq 0}$ jumps at least once over the time interval $[t,t+\epsilon)$ is equal to
\begin{equation*}
1 - \exp\left(
-\epsilon (M_{t}-m_{t}+2)
\right) \approx \epsilon (M_{t}-m_{t}+2),
\end{equation*}
and the probability that it jumps at least twice is bounded from above by
\begin{equation*}
\left(1 - \exp\left(
-\epsilon (M_{t}-m_{t}+3)
\right)\right)^{2} \approx \epsilon^{2} (M_{t}-m_{t}+3)^{2}.
\end{equation*}
Therefore, if $\epsilon$ is small enough and if we are able to control $M_{t} - m_{t}$, then we can consider that at most one growth event occurs during a time interval of length $\epsilon$. We use this idea to construct the discretised 2-CGP. In order to ease the notation, we only describe the dynamics of the height difference between the highest and lowest piles, and define the discretised 2-CGP on the state space $\mathbb{N}$. Note that it is possible to recover the complete process from the evolution of the height difference. Moreover, we will often abuse notation and say that the discretised 2-CGP starts from the state $(0,0)$, or that it comes back to a state in $\mathcal{S}_{\square \! \square}$ when it comes back to the state $0$.

\begin{defn}\label{defn:discretized_2_cgp}
For all $\epsilon > 0$ and $N \in \nmath \setminus \{0,1\}$,
the discretised 2-CGP $(\hat{G}_{n}^{(N,\epsilon)})_{n \in \nmath}$ with timestep $\epsilon$ and maximal height difference $N$ is the $\llbracket 0,N \rrbracket$-valued discrete-time Markov chain with initial condition $\hat{G}_{0}^{(N,\epsilon)} = 0$ and whose transition probabilities $(\hat{p}_{i,j}^{(N,\epsilon)})_{(i,j) \in \llbracket 0,N \rrbracket^{2}}$ are defined as follows.
\begin{enumerate}
\item If $i = 0$, $\hat{p}_{0,0}^{(N,\epsilon)} = \exp(-2\epsilon)$, $\hat{p}_{0,1}^{(N,\epsilon)} = 1 - \exp(-2\epsilon)$, and for all $j \in \llbracket 2,N \rrbracket$, $\hat{p}_{0,j}^{(N,\epsilon)} = 0$.
\item For all $i \in \llbracket 1,N-1 \rrbracket$,
\begin{equation*}
\hat{p}_{i,j}^{(N,\epsilon)} :=
\left\lbrace
\begin{array}{ll}
0  & \mbox{if } j > i+1,\\
\frac{2}{i+2} (1-\exp(-(i+2)\epsilon)) & \mbox{if }j = i-1,\\
\exp(-(i+2)\epsilon) & \mbox{if }j = i,\\
\frac{1}{i+2} (1-\exp(-(i+2)\epsilon)) & \mbox{if }j = i+1 \mbox{ or (if $i \neq 1$) } 0 \leq j \leq i-2,
\end{array}\right.
\end{equation*}
\item If $i = N$,
\begin{equation*}
\hat{p}_{N,j}^{(N,\epsilon)} :=
\left\lbrace
\begin{array}{ll}
0  & \mbox{if } j > i+1,\\
\frac{2}{N+2} (1-\exp(-(N+2)\epsilon)) & \mbox{if }j = i-1,\\
1 - \frac{N+1}{N+2} (1-\exp(-(N+2)\epsilon)) & \mbox{if }j = i,\\
\frac{1}{N+2} (1-\exp(-(N+2)\epsilon)) & \mbox{if } 0 \leq j \leq i-2,
\end{array}\right.
\end{equation*}
\end{enumerate}
\end{defn}
This process has dynamics similar to those of the 2-CGP, except when the height difference between the two piles is equal to $N$: the growth of the highest pile is then blocked until the lower pile grows. This bound ensures we can compute the invariant distribution of the process.

Before studying the properties of the discretised 2-CGP, we explain how to couple it to the (continuous-time) 2-CGP. In order to do so, for all $n \in \nmath$, let $t_{n} := n \epsilon$. Moreover, let $\hat{T}_{p}^{(\epsilon)}$ be the smallest positive integer such that $(G_{t})_{t \geq 0}$ jumps at least twice over the time interval $[t_{\hat{T}_{p}^{(\epsilon)}}, t_{\hat{T}_{p}^{(\epsilon)} + 1})$ of length~$\varepsilon$, and let $\hat{T}_{N}^{(\epsilon)}$ be the smallest positive integer such that there exists $t \in [t_{\hat{T}_{N}^{(\epsilon)}}, t_{\hat{T}_{N}^{(\epsilon)}+1})$ such that $M_{t} \geq N$. We then construct the coupled discretised 2-CGP $(\hat{G}_{n}^{N,\epsilon})_{n \in \nmath}$ as follows.
\begin{enumerate}
\item First, we set $\hat{G}_{0}^{(N,\epsilon)} = 0$.
\item For all $n \in \llbracket 0,\min(\hat{T}_{N}^{(\epsilon)},\hat{T}_{p}^{(\epsilon)}) - 1 \rrbracket$, we set
\begin{equation*}
\hat{G}_{n+1}^{(N,\epsilon)} = M_{t_{n+1}} - m_{t_{n+1}}.
\end{equation*}
\item If $\hat{T}_{p}^{(\epsilon)} \leq \hat{T}_{N}^{(\epsilon)}$, $\hat{G}_{\hat{T}_{p}^{(\epsilon)}+1}^{(N,\epsilon)}$ is taken equal to the value of $M_{t} - m_{t}$ after the first jump of $(G_{t})_{t \geq 0}$ over the time interval $[t_{\hat{T}_{p}^{(\epsilon)}}, t_{\hat{T}_{p}^{(\epsilon)}+1})$. Otherwise, we set
$\hat{G}_{\hat{T}_{N}^{(\epsilon)}+1}^{(N,\epsilon)} = M_{t_{\hat{T}_{N}^{(\epsilon)}+1}} - m_{t_{\hat{T}_{N}^{(\epsilon)}+1}}$.
\item For $n > \min\left(\hat{T}_{p}^{(\epsilon)}+1, \hat{T}_{N}^{(\epsilon)}+1\right)$, the coupling no longer holds, and $(\hat{G}_{n}^{(N,\epsilon)})_{n \in \nmath}$ evolves according to the dynamics described in Definition~\ref{defn:discretized_2_cgp}, independently of the dynamics of the continuous-time $2$-CGP.
\end{enumerate}
This coupling satisfies the following property.
\begin{lem}\label{lem:coupling}
Let $\epsilon > 0$ and $N \in \nmath \setminus \{0,1\}$. The coupling of the discretised 2-CGP $(\hat{G}_{n}^{(N,\epsilon)})_{n \in \nmath}$ with timestep~$\epsilon$ and maximal height difference~$N$ to the original 2-CGP $(G_{t})_{t \geq 0}$ holds until time $\min\left(\hat{T}_{p}^{(\epsilon)},\hat{T}_{N}^{(\epsilon)}
\right)$.
In other words, for all $n \in \nmath$, if $n \leq \min\left(\hat{T}_{p}^{(\epsilon)},\hat{T}_{N}^{(\epsilon)}
\right)$, then
\begin{equation*}
\hat{G}_{n}^{(N,\epsilon)} = M_{t_{n}}-m_{t_{n}}.
\end{equation*}
Moreover, let $\hat{T}_{\square}^{(N,\epsilon)}$ be defined as
\begin{equation*}
\hat{T}_{\square}^{(N,\epsilon)} := \min\left\{
n \in \nmath \setminus \{0\} : \hat{G}_{n}^{(N,\epsilon)} = 0 \text{ but } \hat{G}_{n-1}^{(N,\epsilon)} \neq 0
\right\}.
\end{equation*}
If $\hat{T}_{\square}^{(N,\epsilon)} \leq \min \left(\hat{T}_{N}^{(\epsilon)}, \hat{T}_{p}^{(\epsilon)}\right)$, then $\epsilon\hat{T}_{\square}^{(N,\epsilon)} - \epsilon < T_{\square} \leq \epsilon \hat{T}_{\square}^{(N,\epsilon)}$ a.s.
\end{lem}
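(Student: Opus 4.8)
The plan is to prove the two assertions in turn, deriving the sandwich bound on $T_\square$ from the pathwise identity $\hat{G}_n^{(N,\epsilon)} = M_{t_n} - m_{t_n}$ together with a careful description of the trajectory of the height difference $D_t := M_t - m_t$ of the continuous-time $2$-CGP. Throughout I would work on the event $\{\hat{T}_\square^{(N,\epsilon)} \leq \min(\hat{T}_N^{(\epsilon)}, \hat{T}_p^{(\epsilon)})\}$, whose whole point is to guarantee at most one jump of $(G_t)_{t\geq 0}$ per time interval of length $\epsilon$ up to the relevant time.

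The first assertion is essentially immediate from the coupling construction, and I would verify it by induction on $n$. The base case $\hat{G}_0^{(N,\epsilon)} = 0 = M_0 - m_0$ follows from $G_0 = (0,0)$, and the inductive step is exactly step~2 of the construction, which sets $\hat{G}_{n+1}^{(N,\epsilon)} = M_{t_{n+1}} - m_{t_{n+1}}$ for every $n < \min(\hat{T}_N^{(\epsilon)}, \hat{T}_p^{(\epsilon)})$. The only point worth recording is that this prescription is consistent with the transition probabilities of Definition~\ref{defn:discretized_2_cgp}, so that $(\hat{G}_n^{(N,\epsilon)})$ genuinely carries the law of the discretised chain: on an interval carrying at most one jump, ``no jump'' occurs with probability $\exp(-(D_{t_n}+2)\epsilon)$, and otherwise the destination is that of the embedded jump chain, with proportions $\tfrac{1}{i+2}, \tfrac{2}{i+2}, \ldots$, which matches $\hat{p}_{i,\cdot}^{(N,\epsilon)}$ exactly; step~3 is designed precisely so that even the first interval carrying two or more jumps contributes its \emph{first} jump, preserving this match up to the coupling time. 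I expect this verification to be routine.

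For the second assertion I would first record the elementary structure of the continuous trajectory: starting from $(0,0)$ one has $D_t = 0$ on $[0, T_1)$, where $T_1$ is the exit time of $(0,0)$, then $D_t \geq 1$ on $[T_1, T_\square)$ by the very definition of $T_\square$ as the first return to difference $0$, and $D_{T_\square} = 0$. On the event above, the first assertion gives $\hat{G}_n^{(N,\epsilon)} = D_{t_n}$ for all $n \leq \hat{T}_\square^{(N,\epsilon)}$; in particular $D_{t_{\hat{T}_\square}} = 0$ while $D_{t_{\hat{T}_\square - 1}} \neq 0$. The upper bound $T_\square \leq \epsilon \hat{T}_\square^{(N,\epsilon)}$ then follows quickly: since $D_{t_{\hat{T}_\square - 1}} \neq 0$ forces $t_{\hat{T}_\square - 1} \geq T_1$, the process has already left $0$, and being back at $0$ at time $t_{\hat{T}_\square}$ it must have returned no later than $t_{\hat{T}_\square}$.

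The lower bound $T_\square > \epsilon \hat{T}_\square^{(N,\epsilon)} - \epsilon$ is the main obstacle, and I would argue it by contradiction. Suppose $T_\square \leq t_{\hat{T}_\square - 1}$ and set $m := \lceil T_\square / \epsilon \rceil$, so that $m \leq \hat{T}_\square^{(N,\epsilon)} - 1$ and $m-1 < \hat{T}_\square^{(N,\epsilon)} \leq \hat{T}_p^{(\epsilon)}$. Then the interval $[t_{m-1}, t_m)$ carries at most one jump, which must be the down-jump realising the return $D_{T_\square} = 0$; being constant before that jump and equal to $0$ afterwards, one gets $D_{t_{m-1}} \geq 1$ and $D_{t_m} = 0$. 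By the first assertion this reads $\hat{G}_{m-1}^{(N,\epsilon)} \neq 0$ and $\hat{G}_m^{(N,\epsilon)} = 0$ with $m < \hat{T}_\square^{(N,\epsilon)}$, contradicting the minimality in the definition of $\hat{T}_\square^{(N,\epsilon)}$. Hence $T_\square \in (\epsilon\hat{T}_\square^{(N,\epsilon)} - \epsilon,\, \epsilon\hat{T}_\square^{(N,\epsilon)}]$. The delicate point in both bounds is the repeated use of $\hat{T}_\square^{(N,\epsilon)} \leq \hat{T}_p^{(\epsilon)}$ to force at most one jump per interval, which is exactly what lets a return of the continuous process be registered by the discretised chain at the very next grid point and forbids two returns from being collapsed into a single step.
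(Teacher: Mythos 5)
Your proposal is correct and takes essentially the same route as the paper's proof: both parts rest on the pathwise identity $\hat{G}_{n}^{(N,\epsilon)} = M_{t_{n}}-m_{t_{n}}$ supplied by the coupling construction, combined with the fact that before $\hat{T}_{p}^{(\epsilon)}$ each interval $[t_{n},t_{n+1})$ carries at most one jump of $(G_{t})_{t \geq 0}$, so that the return of the continuous process to $\mathcal{S}_{\square\!\square}$ is registered by the discretised chain at the very next grid point. The only difference is organisational: you obtain the lower bound $\epsilon\hat{T}_{\square}^{(N,\epsilon)}-\epsilon < T_{\square}$ by contradiction with the minimality of $\hat{T}_{\square}^{(N,\epsilon)}$, whereas the paper argues forward by propagating $M_{t}-m_{t} \neq 0$ across the successive intervals up to $t_{\hat{T}_{\square}^{(N,\epsilon)}-1}$; your variant has the minor advantage of not needing to treat separately the initial steps during which the chain may linger at state~$0$.
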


Notice that contrary to $T_{\square}$, the random variable $\hat{T}_{\square}^{(N,\epsilon)}$ does not correspond exactly to the time of first return of $\hat{G}_{n}^{(N,\epsilon)}$ to $0$, but rather to the time needed for the process to exit state~$0$ and then return to it. For instance, if $\hat{G}_{1}^{(N,\epsilon)} = 0$, then the time of first return to state~$0$ of $(\hat{G}_{n}^{(N,\epsilon)})_{n \in \nmath}$ is equal to $1$, while $\hat{T}_{\square}^{(N,\epsilon)} > 1$.

\begin{proof}
The first part of the lemma is a direct consequence of the coupling. Next, assume that $\hat{T}_{\square}^{(N,\epsilon)} \leq \min \left(\hat{T}_{N}^{(\epsilon)}, \hat{T}_{p}^{(\epsilon)}\right)$. By definition of $\hat{T}_{\square}^{(N,\epsilon)}$, we know that
\begin{equation}\label{eqn:equation_1}
M_{t_{\hat{T}_{\square}^{(N,\epsilon)}}}-m_{t_{\hat{T}_{\square}^{(N,\epsilon)}}} = 0,
\end{equation}
and for all $n \in \llbracket 0, \hat{T}_{\square}^{(N,\epsilon)} \rrbracket$,
\begin{equation*}
M_{t_{n}} - m_{t_{n}} \neq 0.
\end{equation*}
Moreover, under the assumption $\hat{T}_{\square}^{(N,\epsilon)} \leq \hat{T}_{p}^{(\epsilon)}$, 
$(G_{t})_{t \geq 0}$ jumps at most once over each time interval $[t_{n},t_{n+1})$, $n \in \llbracket 0, \hat{T}_{\square}^{(N,\epsilon)}-1\rrbracket$. Therefore, for all $n \in \llbracket 0, \hat{T}_{\square}^{(N,\epsilon)}-1\rrbracket$ and $t \in [t_{n},t_{n+1})$,
\begin{equation*}
M_{t} - m_{t} \in \left\{
M_{t_{n}}-m_{t_{n}},M_{t_{n+1}}-m_{t_{n+1}}
\right\},
\end{equation*}
from which we deduce that
\begin{equation*}
\forall t \in \Big[0,t_{\hat{T}_{\square}^{(N,\epsilon)}-1}\Big], \quad M_{t}-m_{t} \neq 0.
\end{equation*}
Therefore,
\begin{equation*}
T_{\square} > t_{\hat{T}_{\square}^{(N,\epsilon)}-1} = \epsilon\left(\hat{T}_{\square}^{(N,\epsilon)}-1\right).
\end{equation*}
Moreover, by \eqref{eqn:equation_1}, we have
\begin{equation*}
T_{\square} \leq t_{\hat{T}_{\square}^{(N,\epsilon)}} = \epsilon \hat{T}_{\square}^{(N,\epsilon)},
\end{equation*}
and we can conclude.
\end{proof}

The interest of the discretised 2-CGP lies in the fact that it is possible to compute $\bE_{(0,0)}[\hat{T}_{\square}^{(N,\epsilon)}]$ explicitly, using the invariant distribution of the process. The quantity $\epsilon \bE_{(0,0)}[\hat{T}_{\square}^{(N,\epsilon)}]$ is a good approximation for $\bE_{(0,0)}[T_{\square}]$ when $\epsilon \to 0$ and $N \to + \infty$, as stated in the following result.
\begin{prop}\label{prop:cvg_expectation}
We have
\begin{equation*}
\lim_{\substack{\epsilon \to 0 \\ N \to + \infty \\ N^{2}\epsilon \to 0}}
\epsilon \bE_{(0,0)} \left[
\hat{T}_{\square}^{(N,\epsilon)}
\right] = \bE_{(0,0)} [T_{\square}].
\end{equation*}
\end{prop}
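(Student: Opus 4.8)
The plan is to reduce everything to the coupling estimate of Lemma~\ref{lem:coupling}. Introduce the \emph{good event}
\[
E_{N,\epsilon} := \left\{\hat{T}_{\square}^{(N,\epsilon)} \leq \min\!\big(\hat{T}_{N}^{(\epsilon)},\hat{T}_{p}^{(\epsilon)}\big)\right\},
\]
on which Lemma~\ref{lem:coupling} yields $\epsilon\hat{T}_{\square}^{(N,\epsilon)} - \epsilon < T_{\square} \leq \epsilon\hat{T}_{\square}^{(N,\epsilon)}$, so that $\big|\epsilon\hat{T}_{\square}^{(N,\epsilon)} - T_{\square}\big| \leq \epsilon$ there. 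I would then write
\begin{align*}
\epsilon\bE_{(0,0)}\big[\hat{T}_{\square}^{(N,\epsilon)}\big] - \bE_{(0,0)}[T_{\square}]
&= \bE_{(0,0)}\big[\big(\epsilon\hat{T}_{\square}^{(N,\epsilon)} - T_{\square}\big)\un{E_{N,\epsilon}}\big] \\
&\quad + \epsilon\bE_{(0,0)}\big[\hat{T}_{\square}^{(N,\epsilon)}\un{E_{N,\epsilon}^{c}}\big] - \bE_{(0,0)}\big[T_{\square}\un{E_{N,\epsilon}^{c}}\big].
\end{align*}
The first term is bounded by $\epsilon$ in absolute value and vanishes; the whole difficulty lies in showing that the two remaining bad-event contributions vanish in the joint limit $\epsilon\to 0$, $N\to+\infty$, $N^{2}\epsilon\to 0$.

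The next step is to prove $\bP_{(0,0)}(E_{N,\epsilon}^{c})\to 0$ by splitting the bad event into its two causes. If the process hits the cap, $\hat{T}_{N}^{(\epsilon)} < \hat{T}_{\square}^{(N,\epsilon)}$, then (up to also invoking the double-jump event) the coupling forces the continuous $M$ to reach height $N$ before $T_{\square}$; since $t\mapsto M_{t}$ is non-decreasing and $M_{T_{\square}}=m_{T_{\square}}$, this entails $M_{T_{\square}}\geq N$, whose probability is at most $\bE_{(0,0)}[M_{T_{\square}}]/N \leq 3/N$ by Lemma~\ref{lem:esperance_hauteur} and Markov's inequality, hence $\to 0$ as $N\to+\infty$. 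If a double jump occurs before the return, $\hat{T}_{p}^{(\epsilon)} < \hat{T}_{\square}^{(N,\epsilon)}$, I would union-bound over the (at most $\hat{T}_{\square}^{(N,\epsilon)}$) intervals: before decoupling the height difference is capped at $N$, so each interval contributes at most $\big(1-\exp(-\epsilon(N+3))\big)^{2}\leq \epsilon^{2}(N+3)^{2}$, giving a total of order $\epsilon^{2}(N+3)^{2}\,\bE_{(0,0)}[\hat{T}_{\square}^{(N,\epsilon)}]$. This is exactly where the scaling constraint $N^{2}\epsilon\to 0$ enters, provided one has a uniform bound $\epsilon\,\bE_{(0,0)}[\hat{T}_{\square}^{(N,\epsilon)}]\leq C$.

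Finally one must control the two bad-event expectations. The term $\bE_{(0,0)}[T_{\square}\un{E_{N,\epsilon}^{c}}]$ is easy: $T_{\square}$ is a fixed random variable with $\bE_{(0,0)}[T_{\square}]\leq 3/2<\infty$ by Lemma~\ref{lem:esperance_T_square}, so the contribution tends to $0$ once $\bP_{(0,0)}(E_{N,\epsilon}^{c})\to 0$, by integrability. The genuinely delicate term is $\epsilon\,\bE_{(0,0)}[\hat{T}_{\square}^{(N,\epsilon)}\un{E_{N,\epsilon}^{c}}]$: on the bad event the discretised chain can run far longer than $T_{\square}/\epsilon$, so it cannot be dominated by $T_{\square}$. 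I expect this to be the main obstacle. The route I would take is to establish a uniform-in-$(N,\epsilon)$ second-moment bound $\epsilon^{2}\bE_{(0,0)}\big[(\hat{T}_{\square}^{(N,\epsilon)})^{2}\big]\leq C$, which gives uniform integrability and then, by Cauchy--Schwarz, $\epsilon\,\bE_{(0,0)}[\hat{T}_{\square}^{(N,\epsilon)}\un{E_{N,\epsilon}^{c}}] \leq \sqrt{\epsilon^{2}\bE_{(0,0)}[(\hat{T}_{\square}^{(N,\epsilon)})^{2}]}\,\sqrt{\bP_{(0,0)}(E_{N,\epsilon}^{c})}\to 0$. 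To obtain the moment bounds I would identify the embedded jump chain of $\hat{G}^{(N,\epsilon)}$ with that of the continuous height-difference process, observing that each visit to state $i$ costs a geometric number of $\epsilon$-steps of mean $\epsilon/(1-\exp(-(i+2)\epsilon))\leq 2/(i+2)$, matching the continuous holding-time mean $1/(i+2)$; since capping at $N$ only shortens excursions, $\epsilon\hat{T}_{\square}^{(N,\epsilon)}$ is stochastically comparable, uniformly in $N$ and $\epsilon$, to the exponentially-tailed return time $T_{\square}\leq T_{1}+T_{\to}$ of the continuous process. This comparison, supplying both the first-moment bound used above and the second-moment bound, is the step requiring the most care.
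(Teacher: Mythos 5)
Your proposal is correct in outline and shares the paper's overall skeleton: the coupling of Lemma~\ref{lem:coupling}, a good/bad-event decomposition, uniform moment bounds for $\epsilon\hat{T}_{\square}^{(N,\epsilon)}$, and vanishing bad-event probabilities. The key estimates, however, are obtained by genuinely different means. The paper phrases the bad events through the auxiliary exponential clock $T_{\to}$, whose independence from all other clocks is what makes its two estimates clean: the cap event is bounded by a Poisson-tail computation (Lemma~\ref{lem:TN_inf_T_square}, giving $2^{-(N-1)}e^{\epsilon}$), and the double-jump event by conditioning on $T_{\to}$ (Lemma~\ref{lem:Tp_inf_T_square}, where the number of intervals is geometric and independent of the per-interval estimates). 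You instead bound the cap event via Markov's inequality and $\bE_{(0,0)}[M_{T_{\square}}]\leq 3$ (Lemma~\ref{lem:esperance_hauteur}) --- weaker ($3/N$ instead of exponentially small) but entirely sufficient --- and the double-jump event by a union bound over the random number of intervals preceding $\hat{T}_{\square}^{(N,\epsilon)}$; this is where your version needs the most justification, since that count is a stopping time correlated with the double-jump indicators, so the per-interval bound must be applied conditionally on the past and summed via optional stopping, and it consumes the uniform first-moment bound. Where your proposal is actually sharper than the paper is the treatment of $\epsilon\bE_{(0,0)}[\hat{T}_{\square}^{(N,\epsilon)}\un{E_{N,\epsilon}^{c}}]$: you correctly identify that this term cannot be dominated by a fixed integrable variable, and your uniform second-moment bound plus Cauchy--Schwarz closes it rigorously, whereas the paper only establishes first-moment bounds and appeals to dominated convergence, which is loose precisely because the random variables vary with $(N,\epsilon)$.

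The one soft spot is your route to the moment bounds, through a stochastic comparison of the capped discrete chain with the continuous height-difference process; as you acknowledge yourself, both the claim that capping only shortens excursions and the geometric-versus-exponential holding-time comparison would need real proofs. This detour is unnecessary. From any state $i\in\llbracket 1,N\rrbracket$ the discretised chain hits $0$ in a single step with probability at least $(N+2)^{-1}(1-\exp(-(N+2)\epsilon))$ (use that $x\mapsto(1-e^{-x})/x$ is decreasing to compare the states $i<N$ with $i=N$), so the post-exit return time is stochastically dominated by a geometric variable with that parameter, while the exit time from $0$ is geometric with parameter $1-\exp(-2\epsilon)$. Since $\epsilon(N+2)/(1-\exp(-(N+2)\epsilon))\to 1$ and $\epsilon/(1-\exp(-2\epsilon))\to 1/2$ in the regime considered, this yields uniform bounds on both $\epsilon\bE_{(0,0)}[\hat{T}_{\square}^{(N,\epsilon)}]$ and $\epsilon^{2}\bE_{(0,0)}[(\hat{T}_{\square}^{(N,\epsilon)})^{2}]$ simultaneously; the first-moment version is exactly the bound the paper itself uses. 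With this substitution your argument is complete.
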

In order to show Proposition~\ref{prop:cvg_expectation}, 
we will make use of the exponential clock $T_{\to} \sim \mathcal{E}\text{xp}(1)$ associated to sideways growth of the highest pile after the 2-CGP has exited state~$(0,0)$, previously used in the proofs of Lemmas~\ref{lem:esperance_T_square} and~\ref{lem:esperance_hauteur} (we recall that if the two piles have the same height, then~$T_{\to}$ is arbitrarily assigned to the previously-highest pile). Indeed, it satisfies~$T_{\square} \leq T_{\to}$, and more importantly, it is completely independent of the other exponential clocks. In Lemma~\ref{lem:TN_inf_T_square}, we show that for large~$N$, with high probability, $T_{\to}$ rings before the 2-CGP reaches height~$N$. Then, in Lemma~\ref{lem:Tp_inf_T_square}, we show that before~$T_{\to}$ rings and
under suitable conditions on~$\epsilon$ and $N$, with high probability, at most one growth event occurs in each interval of length~$\epsilon$.
Notice that these two lemmas describe the properties of the \textit{original} (non-discretised) 2-CGP. We will use them to show that in the limiting regime, the coupling between the discretised 2-CGP and the original process still holds at time $\hat{T}_{\square}^{(N,\epsilon)}$, as whether the coupling breaks depends on the dynamics of the original 2-CGP.

\begin{lem}\label{lem:TN_inf_T_square}
For the non-discretised 2-CGP $(G_{t})_{t \geq 0} = (m_{t},M_{t})_{t \geq 0}$, for all $\epsilon > 0$ and $N \in \nmath \setminus \{0,1\}$, we have
\begin{equation*}
\bP_{(0,0)} \left(
T_{N} \leq T_{\to} + \epsilon
\right) \leq \frac{1}{2^{N-1}} \exp(\epsilon),
\end{equation*}
where we recall that $T_{N} = \inf\left\{t \geq 0 : M_{t} \geq N\right\}$.
\end{lem}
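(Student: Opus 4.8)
The plan is to exploit the two properties of the clock $T_\to$ recalled just before the statement: that $T_\to \sim \mathcal{E}\text{xp}(1)$ is completely independent of the clocks driving the remaining dynamics, and that up to the first return to $\mathcal{S}_{\square\square}$ the highest pile can only grow upwards at rate exactly $1$. First I would reduce the event $\{T_N \le T_\to + \epsilon\}$ to a race between the upward growth of the highest pile and the single clock $T_\to$. After the exit from $(0,0)$, which already brings the highest pile to height $1$, reaching height $N$ requires $N-1$ further cubes to fall on top of the highest pile. As long as the two piles differ in height, each such upward event occurs at rate $1$ and, crucially, the Poisson clock producing these events is disjoint from (hence independent of) the clock producing $T_\to$; growth of the lowest pile leaves the top cube and its attached clock untouched thanks to the lack of memory of the exponential.

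Second, I would encode these $N-1$ upward events as the first $N-1$ points of a rate-$1$ Poisson process, so that the time $\tilde T_N$ needed to produce them (measured from the exit of $(0,0)$) is a sum of $N-1$ i.i.d.\ $\mathcal{E}\text{xp}(1)$ variables, independent of $T_\to$. Since reaching height $N$ certainly requires these $N-1$ upward events to have occurred, one has $\{T_N \le T_\to + \epsilon\} \subseteq \{\tilde T_N \le T_\to + \epsilon\}$, and it suffices to bound the latter. Conditioning on the process and using $\bP(T_\to \ge s) = e^{-s}$ together with the independence of $T_\to$, a Laplace-transform shift gives
\begin{equation*}
\bP_{(0,0)}(\tilde T_N \le T_\to + \epsilon) = \bE\big[\bP(T_\to \ge \tilde T_N - \epsilon \mid \tilde T_N)\big] \le e^{\epsilon}\,\bE\big[e^{-\tilde T_N}\big] = e^{\epsilon}\Big(\tfrac{1}{2}\Big)^{N-1},
\end{equation*}
where the last equality is the value at $1$ of the Laplace transform of a sum of $N-1$ independent $\mathcal{E}\text{xp}(1)$ variables. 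This is exactly the claimed bound.

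The delicate point, and the one I would spend the most care on, is the reduction in the first step: I must argue that it is legitimate to replace the true hitting time $T_N$ by the idealised upward-clock time $\tilde T_N$ and to disregard everything that happens once the two piles meet. The clean way to do this is to restrict attention to the first excursion away from $\mathcal{S}_{\square\square}$, on which the height difference stays at least $1$ so that the upward rate is genuinely $1$; the lowest pile catching up can only terminate this excursion earlier (via a return to $\mathcal{S}_{\square\square}$, at which point $T_\square \le T_\to$ has already occurred), and therefore only decreases the chance of reaching height $N$ before $T_\to$. Making precise that dropping the catch-up transitions and the subsequent $\mathcal{S}_{\square\square}$ dynamics yields a genuine \emph{upper} bound — rather than accidentally inflating the probability through the faster rate-$2$ growth available after a reset — is the main obstacle, and is exactly what justifies the rate-$1$ (hence factor $2^{-(N-1)}$) count rather than a rate-$2$ one.
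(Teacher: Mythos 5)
Your proposal follows exactly the paper's route: the same reduction of $\{T_{N} \leq T_{\to} + \epsilon\}$ to a race between $N-1$ rate-one top-up events and the independent clock $T_{\to}$, followed by essentially the same computation (your Laplace-transform step is a cleaner repackaging of the paper's explicit integration of Poisson tails, and both yield $2^{-(N-1)}e^{\epsilon}$). The second step is fine granted the first. The genuine gap is the first step, the inclusion $\{T_{N} \leq T_{\to} + \epsilon\} \subseteq \{\tilde{T}_{N} \leq T_{\to} + \epsilon\}$, and the repair you sketch for it is incorrect because the monotonicity you invoke goes the wrong way. The lemma's event concerns the \emph{absolute} height $M_{t}$ reaching $N$, with no restriction to the first excursion away from $\mathcal{S}_{\square\!\square}$. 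The lowest pile can catch up before $T_{\to}$ rings (at rate $1$, by cubes falling on its own top, using clocks independent of $T_{\to}$), and from a state of $\mathcal{S}_{\square\!\square}$ the maximum $M_{t}$ increases at rate $2$, since a cube on either pile raises it. So an early return to $\mathcal{S}_{\square\!\square}$ makes $M$ grow \emph{faster}, i.e.\ it increases the probability of $\{T_{N} \leq T_{\to} + \epsilon\}$ rather than decreasing it; correspondingly, the increments of $M$ form a point process of intensity $2$ on the equal-height stretches, they are not all points of a single rate-one Poisson process, and the domination $T_{N} - T_{1} \geq \tilde{T}_{N}$ simply fails.

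This is not a fixable presentation issue around the constant $1/2$: take $N = 2$. At time $T_{1}$ the state is $(0,1)$ and three independent rate-one clocks compete: a cube on the highest pile (then $M = 2$ before the ring), a cube on the lowest pile (then state $(1,1)$, from which $M$ reaches $2$ before the memoryless ring with probability $2/3$), and the ring itself. Adding the $O(\epsilon)$ contributions of the remaining cases gives
\begin{equation*}
\bP_{(0,0)}\left(T_{2} \leq T_{\to} + \epsilon\right) = \frac{5}{9} + \frac{4}{9}\left(1 - e^{-2\epsilon}\right) > \frac{e^{\epsilon}}{2}
\end{equation*}
for all sufficiently small $\epsilon$, contradicting the stated bound. (For what it is worth, the paper's own proof commits the same error: its first inequality asserts the rate-one Poisson domination without addressing equal-height periods, even though the proof of Lemma~\ref{lem:esperance_hauteur} explicitly inserts a factor $2$ for precisely this reason.) The correct version of your argument dominates every increment of $M$ by a point of the union of the two piles' top-arrival processes, which is a rate-$2$ Poisson process independent of $T_{\to}$; running your Laplace computation with $\mathcal{E}\text{xp}(2)$ increments gives $\bP_{(0,0)}(T_{N} \leq T_{\to} + \epsilon) \leq e^{\epsilon}(2/3)^{N-1}$. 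This is weaker than the claimed bound, but still geometric in $N$, and still sufficient for the only place the lemma is used, namely the proof of Proposition~\ref{prop:cvg_expectation}.
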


\begin{proof}
Let $\epsilon > 0$ and $N \in \nmath \setminus \{0,1\}$. 
Let~$\pcal(\lambda)$, $\lambda > 0$ denote a Poisson random variable with parameter~$\lambda$. Then, 
\begin{align*}
\bP_{(0,0)} \left(
T_{N} \leq T_{\to} + \epsilon
\right)
&\leq \int_{0}^{\infty} e^{-t} \bP_{(0,0)}\left(
\pcal(t+\epsilon) \geq N-1
\right) \\
&= \int_{0}^{\infty} e^{-t} e^{-(t+\epsilon)}\, \left(\sum_{i = N-1}^{+\infty}
\frac{(t+\epsilon)^{i}}{i!}\right) dt \\
&= e^{-\epsilon} \left[
\sum_{i = N-1}^{+ \infty} \int_{0}^{\infty} e^{-2t} \, \frac{(t+\epsilon)^{i}}{i!}dt
\right].
\end{align*}
Moreover, for all $i \geq N-1$,
\begin{align*}
\int_{0}^{\infty} e^{-2t} \, \frac{(t+\epsilon)^{i}}{i!} dt &=
\left[-\frac{1}{2}e^{-2t} \, \frac{(t+\epsilon)^{i}}{i!}
\right]_{0}^{\infty} + \int_{0}^{\infty} \frac{1}{2} e^{-2t} \, \frac{(t+\epsilon)^{i-1}}{(i-1)!}dt \\
&= \frac{1}{2} \frac{\epsilon^{i}}{i!} + \frac{1}{2} \int_{0}^{\infty} e^{-2t} \, \frac{(t+\epsilon)^{i-1}}{(i-1)!}dt. \\
\intertext{By induction, we obtain}
\int_{0}^{\infty} e^{-2t} \, \frac{(t+\epsilon)^{i}}{i!} dt
&= \sum_{j = 0}^{i} \frac{1}{2^{j+1}} \frac{\epsilon^{i-j}}{(i-j)!} \\
&= \frac{1}{2^{i+1}} \left(
\sum_{j = 0}^{i} \frac{(2\epsilon)^{i-j}}{(i-j)!}
\right) \\
&\leq \frac{1}{2^{i+1}} \exp(2\epsilon).
\end{align*}
Therefore,
\begin{align*}
\bP_{(0,0)}(T_{N} \leq T_{\to} + \epsilon) &\leq \sum_{i = N-1}^{+ \infty} \frac{1}{2^{i+1}} e^{2\epsilon} e^{-\epsilon} \leq e^{\epsilon} \frac{1}{2^{N-1}},
\end{align*}
which concludes the proof.
\end{proof}

\begin{lem}\label{lem:Tp_inf_T_square}
In the notation of Lemma~\ref{lem:TN_inf_T_square}, we have
\begin{align*}
&\bP_{(0,0)}\left(\left.
\hat{T}_{p}^{(\epsilon)} < \frac{T_{\to}}{\epsilon} \right| T_{N} > T_{\to} + \epsilon
\right) \\
&\leq 2\left(
1-\exp(-(N+2)\epsilon))
\right) + \frac{1}{1-\exp(-\epsilon)}\left(
1-\exp(-(N+2)\epsilon))
\right)^{2}.
\end{align*}
\end{lem}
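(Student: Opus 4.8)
The plan is to work on the conditioning event $\{T_{N} > T_{\to} + \epsilon\}$, on which the jump rate of the 2-CGP is uniformly bounded, and to show that double jumps inside an interval of length $\epsilon$ are rare, distinguishing the two ``forced'' jumps of an excursion from the ordinary ones.

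First I would record the pathwise rate bound. On $\{T_{N} > T_{\to} + \epsilon\}$ one has $M_{t} < N$ for every $t \leq T_{\to} + \epsilon$, hence $M_{t} - m_{t} \leq M_{t} < N$, so the total jump rate $M_{t} - m_{t} + 2$ of $(G_{t})_{t \geq 0}$ read off from Definition~\ref{defn:2_cgp} is at most $N + 2$ throughout $[0, T_{\to} + \epsilon]$. Consequently, for any interval $[t_{n}, t_{n+1})$ and conditionally on the past $\mathcal{F}_{t_{n}}$, the probability of at least one jump in that interval is at most $1 - \exp(-(N+2)\epsilon)$, and, applying the strong Markov property at the first jump, the probability of at least two jumps is at most $(1 - \exp(-(N+2)\epsilon))^{2}$.

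Next I would classify the intervals $[t_{n}, t_{n+1})$ with $t_{n} < T_{\to}$ that can carry the first double jump. Each excursion has two distinguished jumps: the exit jump out of $(0,0)$ (the very first jump, before which nothing happens) and the top-sideways growth firing at time $T_{\to}$ that returns the process to $\mathcal{S}_{\square\!\square}$. An interval already containing one of these only needs one further jump to be a double-jump interval, which occurs with probability at most $1 - \exp(-(N+2)\epsilon)$; since there are at most two such intervals, a union bound contributes the first term $2(1 - \exp(-(N+2)\epsilon))$. Every remaining interval with $t_{n} < T_{\to}$ carries only ordinary jumps and must contain at least two of them to be a double-jump interval, an event of conditional probability at most $(1 - \exp(-(N+2)\epsilon))^{2}$ by the previous step. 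To sum this over the ordinary intervals, I would use that $T_{\to} \sim \mathcal{E}\text{xp}(1)$ is independent of the clocks driving the ordinary jumps, so the number of such intervals $\#\{n \geq 0 : n\epsilon < T_{\to}\} = \lceil T_{\to}/\epsilon \rceil$ decouples from the jump configuration, and $\bE[\lceil T_{\to}/\epsilon \rceil] = \sum_{k \geq 0} \bP(T_{\to} > k\epsilon) = \sum_{k \geq 0} e^{-k\epsilon} = (1 - e^{-\epsilon})^{-1}$. A first-moment bound then yields the second term $(1 - e^{-\epsilon})^{-1}(1 - \exp(-(N+2)\epsilon))^{2}$, and adding the two contributions gives the claimed inequality.

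The hard part will be reconciling the conditioning on $\{T_{N} > T_{\to} + \epsilon\}$ with the independence of $T_{\to}$ invoked in the last step: the rate bound that makes double jumps rare lives only on the conditioning event, which itself couples $T_{\to}$ to the dynamics, so decoupling $\lceil T_{\to}/\epsilon \rceil$ from the ordinary-jump configuration must be carried out without destroying the pathwise rate bound. I expect to handle this by first fixing the value of $T_{\to}$, bounding the conditional double-jump probability for that value using the rate bound valid up to $T_{\to} + \epsilon$, and only then integrating against the law of $T_{\to}$.
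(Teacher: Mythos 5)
Your proposal is correct and follows essentially the same route as the paper's proof: the same decomposition into the two distinguished intervals (the one containing the exit jump from $(0,0)$ and the one containing the ringing of $T_{\to}$), each needing only one additional jump, plus the intermediate intervals needing two jumps each, with the same per-interval bounds $1-\exp(-(N+2)\epsilon)$ and $(1-\exp(-(N+2)\epsilon))^{2}$ and the same first-moment computation over the number of $\epsilon$-intervals covered by the $\mathcal{E}\text{xp}(1)$ clock, yielding the factor $(1-\exp(-\epsilon))^{-1}$. The conditioning subtlety you flag is handled in the paper exactly as you anticipate: the number of intermediate intervals (determined by $T_{1}$ and $T_{\to}$) is fixed first and the per-interval double-jump bounds are then summed against its law, using the independence of $T_{\to}$ from the other exponential clocks.
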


\begin{proof}
Consider the time intervals 
\begin{equation*}
[t_{0},t_{1})\text{, }[t_{1},t_{2})\text{, ..., }[t_{\lfloor T_{1} \epsilon^{-1} \rfloor},  t_{\lfloor T_{1} \epsilon^{-1} \rfloor+1})\text{, ..., }[t_{\lfloor T_{\to} \epsilon^{-1} \rfloor}, t_{\lfloor T_{\to} \epsilon^{-1} \rfloor +1}). 
\end{equation*}
We work conditionally on $T_{N} > T_{\to} + \epsilon$. In order to have $\hat{T}_{p}^{(\epsilon)} < T_{\to}\epsilon^{-1}$, one of the following events needs to occur.
\begin{enumerate}
\item Another growth event occurs in the time interval $[t_{\lfloor T_{1} \epsilon^{-1} \rfloor},  t_{\lfloor T_{1} \epsilon^{-1} \rfloor+1})$.

This occurs with probability bounded from above by $1-\exp(-(N+2)\epsilon)$.
\item At least two growth events occur in at least one of the time intervals \\ $[t_{\lfloor T_{1} \epsilon^{-1} \rfloor +1},  t_{\lfloor T_{1} \epsilon^{-1} \rfloor+2})$, ..., $[t_{\lfloor T_{\to} \epsilon^{-1} \rfloor-1},  t_{\lfloor T_{\to} \epsilon^{-1} \rfloor})$.

For each time interval, this occurs with probability bounded from above by $(1-\exp(-(N+2)\epsilon))^{2}$.
\item Another growth event occurs in the time interval $[t_{\lfloor T_{\to} \epsilon^{-1} \rfloor}, t_{\lfloor T_{\to} \epsilon^{-1} \rfloor +1})$.

Again, this occurs with probability bounded from above by $1-\exp(-(N+2)\epsilon)$.
\end{enumerate}
This observation implies that
\begin{align*}
&\bP_{(0,0)}\left(\left.
\hat{T}_{p}^{(\epsilon)} < \frac{T_{\to}}{\epsilon} \right| T_{N} > T_{\to} + \epsilon
\right) \\
&\leq 1 - \exp(-(N+2)\epsilon) \\
&\quad + \sum_{k = 1}^{+ \infty} \bP_{(0,0)}\left(
\lfloor T_{\to} \epsilon^{-1}\rfloor - 1 - \lfloor T_{1} \epsilon^{-1} \rfloor = k
\right) k \left(
1 - \exp(-(N+2)\epsilon)
\right)^{2} \\
&\quad + 1 - \exp(-(N+2)\epsilon) \\
&\leq 2(1-\exp(-(N+2)\epsilon))
+ \sum_{k = 2}^{+ \infty} k\exp(-\epsilon)^{k-1} (1-\exp(-\epsilon))
(1-\exp(-(N+2)\epsilon))^{2} \\
&\leq 2\left(
1-\exp(-(N+2)\epsilon))
\right) + \frac{1}{1-\exp(-\epsilon)}\left(
1-\exp(-(N+2)dt))
\right)^{2},
\end{align*}
and we can conclude.
\end{proof}

We can now show Proposition~\ref{prop:cvg_expectation}.

\begin{proof}(Proposition~\ref{prop:cvg_expectation}) By Lemma~\ref{lem:coupling}, if
$T_{\square} \leq \min (
\hat{T}_{N}^{(\epsilon)}, \epsilon\hat{T}_{p}^{\epsilon}
)$, then
\begin{equation*}
\epsilon \hat{T}_{\square}^{(N,\epsilon)} - \epsilon < T_{\square} \leq \epsilon \hat{T}_{\square}^{(N,\epsilon)}.
\end{equation*}
In particular, as $T_{\square} \leq T_{\to}$, this is true if $T_{N} - \epsilon > T_{\to}$ and $\hat{T}_{p}^{(\epsilon)} \geq T_{\to} \epsilon^{-1}$. Then, by case disjunction,
\begin{align*}
\bE_{(0,0)}[T_{\square}] = &\bE_{(0,0)} \left[
T_{\square} \un{T_{N} < T_{\to} + \epsilon}
\right] + \bE_{(0,0)} \left[
T_{\square} \un{T_{N} > T_{\to} + \epsilon} \un{\hat{T}_{p}{(\epsilon)} < T_{\to} \epsilon^{-1}}\right] \\
&+ \bE_{(0,0)} \left[
T_{\square} \un{T_{N} > T_{\to} + \epsilon} \un{\hat{T}_{p}{(\epsilon)} \geq T_{\to} \epsilon^{-1}}
\right],
\end{align*}
and similarly,
\begin{align*}
\bE_{(0,0)} \left[\hat{T}_{\square}^{(N,\epsilon)}
\right] = &\bE_{(0,0)} \left[
\hat{T}_{\square}^{(N,\epsilon)} \un{T_{N} < T_{\to} + \epsilon}
\right] + \bE_{(0,0)} \left[
\hat{T}_{\square}^{(N,\epsilon)} \un{T_{N} > T_{\to} + \epsilon} \un{\hat{T}_{p}{(\epsilon)} < T_{\to} \epsilon^{-1}}\right] \\
&+ \bE_{(0,0)} \left[
\hat{T}_{\square}^{(N,\epsilon)} \un{T_{N} > T_{\to} + \epsilon} \un{\hat{T}_{p}{(\epsilon)} \geq T_{\to} \epsilon^{-1}}
\right].
\end{align*}
Therefore,
\begin{align*}
\epsilon \bE_{(0,0)} \Big[&
\hat{T}_{\square}^{(N,\epsilon)} \un{T_{N} > T_{\to} + \epsilon} \un{\hat{T}_{p}{(\epsilon)} \geq T_{\to} \epsilon^{-1}}
\Big] - \epsilon + \bE_{(0,0)} \left[
T_{\square} \un{T_{N} < T_{\to} + \epsilon}
\right]\\
& \qquad \qquad \qquad \qquad \qquad \qquad \qquad+ \bE_{(0,0)} \left[T_{\square} \un{T_{N} > T_{\to} + \epsilon} \un{\hat{T}_{p}{(\epsilon)} < T_{\to} \epsilon^{-1}}\right] \\
\leq & \bE_{(0,0)}[T_{\square}] \\
\leq & \epsilon \bE_{(0,0)} \left[
\hat{T}_{\square}^{(N,\epsilon)} \un{T_{N} > T_{\to} + \epsilon} \un{\hat{T}_{p}{(\epsilon)} \geq T_{\to} \epsilon^{-1}}\right] + \bE_{(0,0)} \left[
T_{\square} \un{T_{N} < T_{\to} + \epsilon}
\right] \\
&\qquad \qquad \qquad \qquad \qquad \qquad \qquad\qquad + \bE_{(0,0)} \left[
T_{\square} \un{T_{N} > T_{\to} + \epsilon} \un{\hat{T}_{p}{(\epsilon)} < T_{\to} \epsilon^{-1}}\right],
\end{align*}
and hence
\begin{align*}
\epsilon \bE_{(0,0)}&\left[\hat{T}_{\square}^{(N,\epsilon)}\right] - \epsilon \\
& \quad + \bE_{(0,0)} \left[
\left(T_{\square} - \epsilon \hat{T}_{\square}^{N,\epsilon}\right)
\left(
\un{T_{N} < T_{\to} + \epsilon} + \un{T_{N} > T_{\to} + \epsilon} \un{\hat{T}_{p}{(\epsilon)} < T_{\to} \epsilon^{-1}}
\right)\right] \\
\leq & \bE_{(0,0)}[T_{\square}] \\
\leq & \epsilon \bE_{(0,0)}\left[\hat{T}_{\square}^{(N,\epsilon)}\right] \\
&\quad + \bE_{(0,0)} \left[
\left(T_{\square} - \epsilon \hat{T}_{\square}^{N,\epsilon}\right)
\left(
\un{T_{N} < T_{\to} + \epsilon} + \un{T_{N} > T_{\to} + \epsilon} \un{\hat{T}_{p}{(\epsilon)} < T_{\to} \epsilon^{-1}}
\right)\right].
\end{align*}
By \eqref{lem:esperance_T_square}, we have $\bE_{(0,0)}[T_{\square}] \leq 3/2$. Moreover,
\begin{align*}
\bE_{(0,0)}\left[
\epsilon \hat{T}_{\square}^{(N,\epsilon)} \right]
&\leq \epsilon \left(
\frac{1}{\hat{p}_{0,1}^{(N,\epsilon)}} + \frac{1}{\hat{p}_{1,0}^{(N,\epsilon)}} + \max_{i \in \llbracket 2,N \rrbracket} \frac{1}{\hat{p}_{i,0}^{(N,\epsilon)}}
\right) \\
&= \epsilon \left(
\frac{1}{1-\exp(-2\epsilon)} + \frac{3}{2(1-\exp(-3\epsilon))}
+ \max_{i \in \llbracket 2,N \rrbracket} \frac{i+2}{1-\exp(-(i+2)\epsilon)}
\right) \\
&\leq \epsilon \left(
\frac{1}{1-\exp(-2\epsilon)} + \frac{3}{2(1-\exp(-3\epsilon))}
+ \frac{N+2}{1-\exp(-(N+2)\epsilon)}
\right).
\end{align*}
Therefore,
\begin{align*}
&\bE_{(0,0)} \left[\left|
T_{\square} - \epsilon\hat{T}_{\square}^{(N,\epsilon)}
\right|\right] \\
&\leq 3/2 + \epsilon \left(
\frac{1}{1-\exp(-2\epsilon)} + \frac{3}{2(1-\exp(-3\epsilon))}+ \frac{N+2}{1-\exp(-(N+2)\epsilon)}
\right)
\end{align*}
and
\begin{equation*}
\lim_{\substack{\epsilon \to 0 \\ N \to + \infty \\ N^{2}\epsilon \to 0}}
\bE_{(0,0)} \left[
\left|T_{\square} - \hat{T}_{\square}^{(N,\epsilon)}
\right|\right] \leq 3/2 + 1/2 + 3/6 +1 < + \infty.
\end{equation*}
Moreover, by Lemmas~\ref{lem:TN_inf_T_square} and \ref{lem:Tp_inf_T_square}, we have
\[
\bP_{(0,0)} \left(T_{N} < T_{\to} + \epsilon\right) \xrightarrow[\begin{subarray}{c}\epsilon \to 0 \\ N \to + \infty \\ N^{2}\epsilon \to 0 \end{subarray}]{} 0,
\]
and
\[ \bP_{(0,0)}\left(\hat{T}_{p}^{\epsilon} < T_{\to} \epsilon^{-1} \Big|T_{N} > T_{\to} + \epsilon
\right) \xrightarrow[\begin{subarray}{c}\epsilon \to 0 \\ N \to + \infty \\ N^{2}\epsilon \to 0 \end{subarray}]{}  0,\]
which allows us to conclude using the dominated convergence theorem.
\end{proof}
Therefore, if we compute $\epsilon\bE_{(0,0)}[\hat{T}_{\square}^{(N,\epsilon)}]$ for $N$ large enough and $\epsilon$ small enough, we can use the corresponding value to approximate $\bE_{(0,0)}[T_{\square}]$. The next section is devoted to obtaining an explicit expression for $\bE_{(0,0)}[\hat{T}_{\square}^{(N,\epsilon)}]$, using the invariant distribution of $(\hat{G}_{n}^{(N,\epsilon)})_{n \geq 0}$.

\subsection{Invariant distribution of the discretised 2-CGP: Proof of Theorem~\ref{theorem:expectationapproxA}}\label{subsec:prooftheorem54}
Since the discretised 2-CGP is an irreducible and positive recurrent Markov chain, there exists a relation between its invariant distribution and the expected time of first return for each of its states.
We want to use this relation to obtain an explicit expression for $\bE_{(0,0)}[\hat{T}_{\square}^{(N,\epsilon)}]$.
However, as explained earlier, the definition of $\hat{T}_{\square}^{(N,\epsilon)}$ does not correspond to how times of first return for discrete-time Markov chains are usually defined in the literature. Therefore, the invariant distribution of $(\hat{G}_{n}^{(N,\epsilon)})_{n \in \nmath}$ does not directly give access to $\bE_{(0,0)}[\hat{T}_{\square}^{(N,\epsilon)}]$.

In order to circumvent this problem, we now introduce the \textit{accelerated discretised 2-CGP}, denoted by $(\widetilde{G}_{n}^{(N,\epsilon)})_{n \geq 0}$.
The dynamics of this new process is identical to the one of the original discretised 2-CGP, \textit{except when the process is in state~$0$}. In this case, the accelerated discretised 2-CGP jumps to state~$1$ with probability~$1$. Therefore, the process cannot stay in state~$0$ during more than one timestep, and the time needed to first leave state~$0$, and then return to it is given by the invariant distribution of the process.

\begin{defn}\label{defn:accelerated_discretized_2_cgp}
The accelerated discretised 2-CGP $(\widetilde{G}_{n}^{(N,\epsilon)})_{n \in \nmath}$ with timestep $\epsilon$ and maximal height difference $N$ is the $\llbracket 0,N \rrbracket$-valued discrete-time Markov chain with initial condition $\widetilde{G}_{0}^{(N,\epsilon)} = 0$ and whose transition probabilities $(p_{i,j}^{(N,\epsilon)})_{(i,j) \in \llbracket 0,N \rrbracket^{2}}$ are defined as follows.
\begin{enumerate}
\item If $i = 0$, then $p_{0,0}^{(N,\epsilon)} = 0$, $p_{0,1}^{(N,\epsilon)} = 1$, and for all $j \in \llbracket 2,N \rrbracket$, $p_{0,j}^{(N,\epsilon)} = 0$.
\item For all $i \in \llbracket 1,N \rrbracket$ and for all $j \in \llbracket 0,N \rrbracket$, $p_{i,j} ^{(N,\epsilon)}= \hat{p}_{i,j}^{(N,\epsilon)}$.
\end{enumerate}
\end{defn}
Similarly as before, we will say that $(\tilde{G}_{n}^{(N,\epsilon)})_{n \in \nmath}$ starts from the state $(0,0)$ (\emph{resp.}, comes back to a state in $\mathcal{S}_{\square \! \square}$) when it starts from (\emph{resp.}, comes back to) the state $0$.

As stated before, the main difference between $(\widetilde{G}_{n}^{(N,\epsilon)})_{n \geq 0}$ and $(\widehat{G}_{n}^{(N,\epsilon)})_{n \geq 0}$ lies in the fact that $(\widetilde{G}_{n}^{(N,\epsilon)})_{n \geq 0}$ cannot stay in the state $0$ during more than one timestep.
Therefore, the mean time of first return to state~$0$ \textit{starting from state~$1$} are equal for the original process and its accelerated version. Moreover, we can compute this time for the accelerated discretised 2-CGP, since:
\begin{itemize}
\item Its mean time of first return to state~$0$ \textit{starting from state~$0$} can be computed using its invariant distribution.
\item If it starts from state~$0$, then it reaches state~$1$ in exactly one timestep.
\end{itemize}
Therefore, if $\widetilde{T}_{\square}^{(N,\epsilon)}$ stands for the time of first return of $(\widetilde{G}_{n}^{(N,\epsilon)})_{n \geq 0}$ to $0$, we have the following lemma.

\begin{lem}\label{lem:accelerated_2_cgp} We have
\begin{equation*}
\bE_{(0,0)}[\hat{T}_{\square}^{(N,\epsilon)}] = \bE_{(0,0)}[\widetilde{T}_{\square}^{(N,\epsilon)}] - 1 + \frac{1}{1-\exp(-2\epsilon)}.
\end{equation*}
\end{lem}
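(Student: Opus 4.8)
The plan is to decompose each of the two return-type times into a sojourn at state~$0$ followed by an excursion away from~$0$, and then to exploit the fact that the two chains share identical dynamics on the states $\geq 1$. The whole discrepancy between $\hat{T}_{\square}^{(N,\epsilon)}$ and $\widetilde{T}_{\square}^{(N,\epsilon)}$ should come only from how each process spends its time at state~$0$.

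First I would analyse $\hat{T}_{\square}^{(N,\epsilon)}$. Starting from $\hat{G}_{0}^{(N,\epsilon)} = 0$, the chain can only move to~$0$ or to~$1$ (since $\hat{p}_{0,j}^{(N,\epsilon)} = 0$ for $j \geq 2$), and it leaves~$0$ at each step with probability $\hat{p}_{0,1}^{(N,\epsilon)} = 1-\exp(-2\epsilon)$. Hence the first time $L$ at which the chain reaches state~$1$ is geometric with this success probability, so that $\bE_{(0,0)}[L] = 1/(1-\exp(-2\epsilon))$. By definition $\hat{T}_{\square}^{(N,\epsilon)}$ is the first $n \geq 1$ with $\hat{G}_{n}^{(N,\epsilon)} = 0$ and $\hat{G}_{n-1}^{(N,\epsilon)} \neq 0$, that is, the first return to~$0$ after the chain has left it. Writing $R$ for the number of additional steps needed to return to~$0$ once the chain has entered state~$1$, the strong Markov property applied at the stopping time $L$ (at which $\hat{G}_{L}^{(N,\epsilon)} = 1$ deterministically) gives $\hat{T}_{\square}^{(N,\epsilon)} = L + R$ with $L$ and $R$ independent, whence $\bE_{(0,0)}[\hat{T}_{\square}^{(N,\epsilon)}] = \tfrac{1}{1-\exp(-2\epsilon)} + \bE[R]$.

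Next I would treat the accelerated chain $(\widetilde{G}_{n}^{(N,\epsilon)})_{n \in \nmath}$. Since $p_{0,1}^{(N,\epsilon)} = 1$, it leaves~$0$ deterministically in one step and occupies state~$1$ at time~$1$. As $p_{i,j}^{(N,\epsilon)} = \hat{p}_{i,j}^{(N,\epsilon)}$ for every $i \geq 1$, the two chains have exactly the same transition law on $\llbracket 1,N \rrbracket$, and therefore the return time to~$0$ started from state~$1$ has the same distribution for both. Thus $\widetilde{T}_{\square}^{(N,\epsilon)} = 1 + R'$ with $R'$ distributed as $R$, giving $\bE_{(0,0)}[\widetilde{T}_{\square}^{(N,\epsilon)}] = 1 + \bE[R]$. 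Eliminating $\bE[R]$ between the two expressions yields $\bE_{(0,0)}[\hat{T}_{\square}^{(N,\epsilon)}] = \bE_{(0,0)}[\widetilde{T}_{\square}^{(N,\epsilon)}] - 1 + \tfrac{1}{1-\exp(-2\epsilon)}$, as claimed.

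There is no serious analytic obstacle here; the only point requiring care is the bookkeeping around state~$0$. One must carefully distinguish $\hat{T}_{\square}^{(N,\epsilon)}$, which includes the \emph{entire} geometric sojourn at~$0$ before the excursion, from the genuine first-return time $\widetilde{T}_{\square}^{(N,\epsilon)}$ of the accelerated chain, which replaces that geometric sojourn (of mean $1/(1-\exp(-2\epsilon))$) by a single deterministic step. The correction $-1 + 1/(1-\exp(-2\epsilon))$ is precisely the difference between these two sojourn contributions, and the identity $R' \stackrel{d}{=} R$ is justified by the coincidence of the transition probabilities on the states $\geq 1$.
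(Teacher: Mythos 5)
Your proof is correct and follows essentially the same route as the paper: the paper's (one-sentence) argument is exactly this decomposition, noting that both chains share the same excursion law away from state~$0$, so the only difference is the geometric sojourn at~$0$ (mean $1/(1-\exp(-2\epsilon))$) for $\hat{G}^{(N,\epsilon)}$ versus the single deterministic step for $\widetilde{G}^{(N,\epsilon)}$. Your write-up merely makes explicit the strong Markov step and the identification $R' \stackrel{d}{=} R$ that the paper leaves implicit.
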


\begin{proof}
Indeed, $(\widetilde{G}_{n}^{(N,\epsilon)})_{n \geq 0}$ exits the state $0$ in exactly one timestep, while $(\widehat{G}_{n}^{(N,\epsilon)})_{n \geq 0}$ needs a number of timesteps distributed as a geometrical law with probability of success $1 - \exp(-2\epsilon)$ to do so.
\end{proof}

We now compute the invariant distribution of $(\widetilde{G}_{n}^{(N,\epsilon)})_{n \geq 0}$. In order to do so, let $(\tilde{\mathbf{p}}_{i}^{(N,\epsilon)})_{0 \leq i \leq N}$ stand for the invariant distribution of $(\widetilde{G}_{n}^{(N,\epsilon)})_{n \geq 0}$.
The sequence $(\tilde{\mathbf{p}}_{i}^{(N,\epsilon)})_{0 \leq i \leq N}$ can be expressed in terms of the sequence $(A_{i}^{(N,\epsilon)})_{0 \leq i \leq N}$ introduced in Theorem~\ref{theorem:expectationapproxA} as follows.
\begin{lem}\label{lem:pi_Ai}
For all $i \in \llbracket 0,N \rrbracket$, we have
\begin{equation*}
\tilde{\mathbf{p}}_{i}^{(N,\epsilon)} = \tilde{\mathbf{p}}_{N}^{(N,\epsilon)}A_{i}^{(N,\epsilon)}  \frac{i+2}{N+2} \,
\frac{1-\exp(-(N+2)\epsilon)}{1-\exp(-(i+2)\epsilon)}.
\end{equation*}
\end{lem}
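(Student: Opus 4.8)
The plan is to exploit the stationarity of $(\tilde{\mathbf{p}}_i^{(N,\epsilon)})_{0\le i\le N}$ through the probability fluxes across the \emph{cuts} of the state space, rather than by manipulating the full system of balance equations directly. Throughout I would fix $N,\epsilon$ and drop the superscripts. For each $1\le j\le N$, consider the partition of $\llbracket 0,N\rrbracket$ into $\{0,\dots,j-1\}$ and $\{j,\dots,N\}$: in stationarity the net flux across this cut must vanish. The crucial simplification is that the accelerated discretised $2$-CGP can only move \emph{upwards} by a single step, so the unique transition crossing the cut from below is $j-1\to j$, whereas every crossing from above is a downward move — either the step $j\to j-1$ or one of the ``big'' jumps from some $k\ge j$ down into $\{0,\dots,j-1\}$.

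First I would write the flux balance explicitly from the transition probabilities of Definition~\ref{defn:accelerated_discretized_2_cgp}. From state $j$ the total probability of landing in $\{0,\dots,j-1\}$ is $\tfrac{2}{j+2}(1-e^{-(j+2)\epsilon})+(j-1)\tfrac{1}{j+2}(1-e^{-(j+2)\epsilon})=\tfrac{j+1}{j+2}(1-e^{-(j+2)\epsilon})$, while from each $k\ge j+1$ exactly $j$ of the big jumps (those landing in $\{0,\dots,j-1\}$) contribute, each with probability $\tfrac{1}{k+2}(1-e^{-(k+2)\epsilon})$. Setting $q_i:=\tilde{\mathbf{p}}_i\,\tfrac{1-e^{-(i+2)\epsilon}}{i+2}$, the balance across the cut at $j$ collapses, for $2\le j\le N$, to
\begin{equation*}
q_{j-1}=(j+1)q_j+j\sum_{k=j+1}^{N}q_k,
\end{equation*}
and, because $p_{0,1}=1$ forces a different left-hand side at $j=1$, to $\tilde{\mathbf{p}}_0=2q_1+\sum_{k=2}^{N}q_k$, i.e. $q_0=\tfrac12\bigl(2q_1+\sum_{k=2}^{N}q_k\bigr)(1-e^{-2\epsilon})$.

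Next I would identify this system with the recursion defining $(A_i)$. The empty-sum cases give $q_{N-1}=(N+1)q_N$ and $q_{N-2}=Nq_{N-1}+(N-1)q_N=(N+1)q_{N-1}-2q_N$, matching the initial values $A_N=1$, $A_{N-1}=N+1$, $A_{N-2}=(N+1)A_{N-1}-2A_N$ after dividing by $q_N$. For the interior range I would subtract the cut equation at level $i+1$ from that at level $i$ to eliminate the tail sum: using $(i+2)q_{i+1}+(i+1)\sum_{k\ge i+2}q_k=q_i$ one checks that the displayed relation is equivalent to
\begin{equation*}
q_{i-1}=(i+2)q_i-2q_{i+1}-\sum_{k=i+2}^{N}q_k,\qquad 2\le i\le N-2,
\end{equation*}
which is precisely the recursion for $A_{i-1}$ after dividing by $q_N$; the $j=1$ equation likewise reproduces the formula for $A_0$. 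Since the cut equations determine $q_{N-1},\dots,q_0$ uniquely from $q_N$ by backward recursion, and $A_N=1$, I conclude $q_i=q_N A_i$ for all $i$. Unfolding the definitions of $q_i$ and of $q_N=\tilde{\mathbf{p}}_N\tfrac{1-e^{-(N+2)\epsilon}}{N+2}$ and solving for $\tilde{\mathbf{p}}_i$ then yields the claimed identity.

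The main obstacle is the bookkeeping in the flux balance: one must count correctly that precisely $j$ of the big downward jumps from each level $k\ge j+1$ cross the cut (this produces the combinatorial factor $j$ multiplying the tail sum), and one must handle the two boundary irregularities — the reflecting top state $N$ and the deterministic exit $p_{0,1}=1$ from state $0$ — without which the clean recursion would not emerge. By contrast, the algebraic reconciliation of the ``summed'' cut equations with the three-term recursion of Theorem~\ref{theorem:expectationapproxA} is a short telescoping computation once the substitution $q_i$ is in place.
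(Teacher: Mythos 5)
Your proposal is correct, and it reaches the identity by a genuinely different route than the paper. The paper proves the lemma by \emph{backwards induction on the single-state balance equations}: it writes the stationarity condition $\sum_{j}\tilde{\mathbf{p}}_{j}^{(N,\epsilon)}p_{j,i}^{(N,\epsilon)}=\tilde{\mathbf{p}}_{i}^{(N,\epsilon)}$ at each state $i$, solves each one for $\tilde{\mathbf{p}}_{i-1}^{(N,\epsilon)}$, and verifies the claimed closed form directly, handling $i=N$, $i=N-1$, $i=N-2$ as base cases and $i=0$ separately — a verification that requires carrying the factors $\tfrac{i+2}{N+2}\,\tfrac{1-e^{-(N+2)\epsilon}}{1-e^{-(i+2)\epsilon}}$ through every step. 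You instead sum the balance equations over $\{j,\dots,N\}$ to get \emph{flux balance across cuts}, which is where the skip-free-upward structure of the chain (no transition $i\to j$ with $j>i+1$, and $p_{0,1}=1$) pays off: the upward flux across each cut is a single term. Your change of variables $q_i=\tilde{\mathbf{p}}_i^{(N,\epsilon)}\,\tfrac{1-e^{-(i+2)\epsilon}}{i+2}$ strips all exponential factors at the outset, the cut equations become the purely combinatorial recursion $q_{j-1}=(j+1)q_j+j\sum_{k=j+1}^N q_k$, and your subtraction of consecutive cut equations correctly recovers the paper's three-term-plus-tail recursion for the $A_i^{(N,\epsilon)}$ (I checked the telescoping: cut at $i$ minus the claimed relation is exactly the cut at $i+1$, and the boundary cases $j=N$, $j=N-1$, $j=1$ match $A_{N-1}$, $A_{N-2}$, $A_0$). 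Since both the cut system and the $A$-recursion determine everything uniquely backwards from the top state, $q_i=q_N A_i^{(N,\epsilon)}$ follows, and unfolding $q_i$ gives the lemma. What your approach buys is a \emph{derivation} rather than a verification — it explains where the recursion defining $(A_i^{(N,\epsilon)})$ comes from, and the bookkeeping (counting that exactly $j$ big downward jumps from each $k\geq j+1$ cross the cut) is lighter than the paper's manipulation of full balance equations with exponential prefactors; what the paper's approach buys is that it is mechanical once the formula is guessed, needing no structural observation about the chain beyond its transition matrix.
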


\begin{proof}
We show that the result is true by backwards induction. First, we check that it is true for $i = N$, $i = N-1$ and $i = N-2$.
\begin{equation*}
A_{N}^{(N,\epsilon)} \frac{N+2}{N+2} \, \frac{1-\exp(-(N+2)\epsilon)}{1-\exp(-(N+2)\epsilon)} = A_{N}^{(N,\epsilon)} = 1,
\end{equation*}
and so the property is true for $i = N$. Then, by definition of the invariant distribution, we have
\[
\tilde{\mathbf{p}}_{N-1}^{(N,\epsilon)} p_{N-1,N}^{(N,\epsilon)} + \tilde{\mathbf{p}}_{N}^{(N,\epsilon)} p_{N,N}^{(N,\epsilon)} = \tilde{\mathbf{p}}_{N}^{(N,\epsilon)}, \]
and so
\begin{align*}
\tilde{\mathbf{p}}_{N-1}^{(N,\epsilon)} &= \frac{1}{p_{N-1,N}^{(N,\epsilon)}}\, \tilde{\mathbf{p}}_{N}^{(N,\epsilon)} (1-p_{N,N}^{(N,\epsilon)}) \\
&= \frac{N+1}{1-\exp(-(N+1)\epsilon)} \, \tilde{\mathbf{p}}_{N}^{(N,\epsilon)} \, \left(
\frac{N+1}{N+2} \left(1-\exp(-(N+2)\epsilon)\right)
\right) \\
&= \tilde{\mathbf{p}}_{N}^{(N,\epsilon)} \frac{N+1}{N+2} \, (N+1) \frac{1-\exp(-(N+2)\epsilon)}{1-\exp(-(N+1)\epsilon)}
\end{align*}
and the result is true for $i = N-1$. Moreover,
\begin{equation*}
\tilde{\mathbf{p}}_{N-2}^{(N,\epsilon)} p_{N-2,N-1}^{(N,\epsilon)} + \tilde{\mathbf{p}}_{N-1}^{(N,\epsilon)} p_{N-1,N-1}^{(N,\epsilon)} + \tilde{\mathbf{p}}_{N}^{(N,\epsilon)} p_{N,N-1}^{(N,\epsilon)} = \tilde{\mathbf{p}}_{N-1}^{(N,\epsilon)},
\end{equation*}
which means that
\begin{align*}
\tilde{\mathbf{p}}_{N-2}^{(N,\epsilon)} &= \frac{1}{p_{N-2,N-1}^{(N,\epsilon)}} \left[
\tilde{\mathbf{p}}_{N-1}^{(N,\epsilon)}(1-p_{N-1,N-1}^{(N,\epsilon)}) - \tilde{\mathbf{p}}_{N}^{(N,\epsilon)}p_{N,N-1}^{(N,\epsilon)}
\right] \\
&= \frac{N}{1-\exp(-N\epsilon)} \, \tilde{\mathbf{p}}_{N}^{(N,\epsilon)} \left[
\frac{N+1}{N+2} A_{n-1}^{(N,\epsilon)} \, \frac{1-\exp(-(N+2)\epsilon)}{1-\exp(-(N+1)\epsilon)} \left(1-\exp(-(N+1)\epsilon)\right) \right] \\
&\quad - \frac{N}{1-\exp(-N\epsilon)} \, \tilde{\mathbf{p}}_{N}^{(N,\epsilon)}\left[\frac{2}{N+2} \left(
1-\exp(-(N+2)\epsilon)
\right)
\right] \\
&= \tilde{\mathbf{p}}_{N}^{(N,\epsilon)} \frac{N}{N+2} \, \left(
(N+1)A_{N-1}^{(N,\epsilon)}-2A_{N}^{(N,\epsilon)}
\right) \,
\frac{1-\exp(-(N+2)\epsilon)}{1-\exp(-N\epsilon)} \\
&= \tilde{\mathbf{p}}_{N} \frac{N}{N+2} \, A_{N-2}^{(N,\epsilon)} \frac{1-\exp(-(N+2)\epsilon)}{1-\exp(-N\epsilon)}
\end{align*}
by definition of $A_{n-2}^{(N,\epsilon)}$.

Then, let $i \in \llbracket 2,N-2 \rrbracket$, and assume that the property is true for $j \in \llbracket i, N \rrbracket$. Again by definition of the invariance property,
\begin{equation*}
\tilde{\mathbf{p}}_{i-1}^{(N,\epsilon)} p_{i-1,i}^{(N,\epsilon)} + \tilde{\mathbf{p}}_{i}^{(N,\epsilon)} p_{i,i}^{(N,\epsilon)} + \tilde{\mathbf{p}}_{i+1}^{(N,\epsilon)} p_{i+1,i}^{(N,\epsilon)} + \sum_{j = i+2}^{N} \tilde{\mathbf{p}}_{j}^{(N,\epsilon)} p_{j,i}^{(N,\epsilon)} = \tilde{\mathbf{p}}_{i}^{(N,\epsilon)},
\end{equation*}
from which we deduce
\begin{align*}
\tilde{\mathbf{p}}_{i-1}^{(N,\epsilon)} &= \frac{1}{p_{i-1,i}^{(N,\epsilon)}} \left(
\tilde{\mathbf{p}}_{i}^{(N,\epsilon)} (1-p_{i,i}^{(N,\epsilon)}) - \tilde{\mathbf{p}}_{i+1}^{(N,\epsilon)} p_{i+1,i}^{(N,\epsilon)} - \sum_{j = i+2}^{N} \tilde{\mathbf{p}}_{j}^{(N,\epsilon)} p_{j,i}^{(N,\epsilon)}
\right) \\
&= \frac{i+1}{1-\exp(-(i+1)\epsilon)} \tilde{\mathbf{p}}_{N}^{(N,\epsilon)} \\
& \quad \times \left[
\frac{i+2}{N+2}A_{i}^{(N,\epsilon)} \frac{1-\exp(-(N+2)\epsilon)}{1-\exp(-(i+2)\epsilon)}
(1-\exp(-(i+2)\epsilon))
\right. \\
& \qquad \quad - \frac{i+3}{N+2}A_{i+1}^{(N,\epsilon)} \frac{1-\exp(-(N+2)\epsilon)}{1-\exp(-(i+3)\epsilon)}
\,\frac{2}{i+3}(1-\exp(-(i+3)\epsilon)) \\
& \qquad \quad
\left.
- \sum_{j =i+2}^{N} \frac{j+2}{N+2}A_{j}^{(N,\epsilon)} \frac{1-\exp(-(N+2)\epsilon)}{1-\exp(-(j+2)\epsilon)} \frac{1}{j+2} (1-\exp(-(j+2)\epsilon)
\right]
\\
&= \tilde{\mathbf{p}}_{N}^{(N,\epsilon)} \frac{i+1}{N+2} \, \frac{1-\exp(-(N+2)\epsilon)}{1-\exp(-(i+1)\epsilon)}\left(
(i+2)A_{i}^{(N,\epsilon)}-2A_{i+1}^{(N,\epsilon)} - \sum_{j = i+2}^{N}A_{j}^{(N,\epsilon)}
\right) \\
&= \tilde{\mathbf{p}}_{N} A_{i-1}^{(N,\epsilon)} \frac{i+1}{N+2} \, \frac{1-\exp(-(N+2)\epsilon)}{1-\exp(-(i+1)\epsilon)}
\end{align*}
by definition of $A_{i-1}^{(N,\epsilon)}$.

We now need to check that the property is true for $i = 0$. We have
\begin{equation*}
\tilde{\mathbf{p}}_{1}^{(N,\epsilon)}p_{1,0}^{(N,\epsilon)} + \sum_{j = 2}^{N} \tilde{\mathbf{p}}_{j}^{(N,\epsilon)}p_{j,0}^{(N,\epsilon)} = \tilde{\mathbf{p}}_{0}^{(N,\epsilon)},
\end{equation*}
and so
\begin{align*}
\tilde{\mathbf{p}}_{0}^{(N,\epsilon)} &= \left[
\tilde{\mathbf{p}}_{1}^{(N,\epsilon)}p_{1,0}^{(N,\epsilon)} + \sum_{j = 2}^{N} \tilde{\mathbf{p}}_{j}^{(N,\epsilon)} p_{j,0}^{(N,\epsilon)}
\right] \\
&= \tilde{\mathbf{p}}_{N}^{(N,\epsilon)} \frac{1-\exp(-(N+2)\epsilon)}{N+2} \\
&\qquad \times \left[
3 \frac{2}{3} \frac{1-\exp(-3\epsilon)}{1-\exp(-3\epsilon)}A_{1}^{(N,\epsilon)}
+ \sum_{j = 2}^{N} \frac{(j+2)A_{j}^{(N,\epsilon)}}{1-\exp(-(j+2)\epsilon)} \frac{1}{j+2}(1-\exp(-(j+2)\epsilon))
\right] \\
&= \tilde{\mathbf{p}}_{N}^{(N,\epsilon)} \frac{1}{N+2} \frac{1-\exp(-(N+2)\epsilon)}{1-\exp(-2\epsilon)}
\left(
2A_{1}^{(N,\epsilon)} + \sum_{j = 2}^{N} A_{j}^{(N,\epsilon)}
\right)(1-\exp(-2\epsilon)) \\
&= \tilde{\mathbf{p}}_{N}^{(N,\epsilon)} \frac{2}{N+2} A_{0}^{(N,\epsilon)} \frac{1-\exp(-(N+2)\epsilon)}{1-\exp(-2\epsilon)},
\end{align*}
which allows us to conclude.
\end{proof}

We can now use the invariant distribution to obtain an explicit formula for $\bE_{(0,0)} \left[\hat{T}_{\square}^{(N,\epsilon)} \right]$, which is key to showing Theorem~\ref{theorem:expectationapproxA}.

\begin{prop}\label{prop:explicit_formula_expectation} We have
\begin{equation*}
\bE_{(0,0)} \left[
\hat{T}_{\square}^{(N,\epsilon)}\right]  = \frac{1}{1-\exp(-2\epsilon)} + \frac{1-\exp(-2\epsilon)}{2A_{0}^{(N,\epsilon)}} \left(
\sum_{i = 1}^{N} \frac{(i+2)A_{i}^{(N,\epsilon)}}{1-\exp(-(i+2)\epsilon)}
\right).
\end{equation*}
\end{prop}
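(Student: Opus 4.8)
The plan is to combine the three ingredients already assembled: the correction relating the discretised and accelerated chains (Lemma~\ref{lem:accelerated_2_cgp}), the classical mean-recurrence-time identity for irreducible positive recurrent Markov chains, and the explicit form of the invariant distribution in terms of the $A_{i}^{(N,\epsilon)}$ (Lemma~\ref{lem:pi_Ai}).

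First I would reduce the statement to a computation for the \emph{accelerated} discretised 2-CGP. By Lemma~\ref{lem:accelerated_2_cgp},
\[
\bE_{(0,0)}[\hat{T}_{\square}^{(N,\epsilon)}] = \bE_{(0,0)}[\widetilde{T}_{\square}^{(N,\epsilon)}] - 1 + \frac{1}{1-\exp(-2\epsilon)},
\]
so it suffices to obtain a closed form for $\bE_{(0,0)}[\widetilde{T}_{\square}^{(N,\epsilon)}]$. The whole point of introducing the accelerated chain is that $\widetilde{T}_{\square}^{(N,\epsilon)}$ is now a genuine first-return time to state~$0$ (the chain leaves $0$ in exactly one step), so the standard identity for an irreducible positive recurrent chain on $\llbracket 0,N \rrbracket$ applies and gives $\bE_{(0,0)}[\widetilde{T}_{\square}^{(N,\epsilon)}] = 1/\tilde{\mathbf{p}}_{0}^{(N,\epsilon)}$.

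Next I would evaluate $1/\tilde{\mathbf{p}}_{0}^{(N,\epsilon)}$ without ever needing the unknown normalising constant $\tilde{\mathbf{p}}_{N}^{(N,\epsilon)}$. Writing $1 = \sum_{i=0}^{N}\tilde{\mathbf{p}}_{i}^{(N,\epsilon)}$ and dividing by $\tilde{\mathbf{p}}_{0}^{(N,\epsilon)}$ gives $1/\tilde{\mathbf{p}}_{0}^{(N,\epsilon)} = \sum_{i=0}^{N} \tilde{\mathbf{p}}_{i}^{(N,\epsilon)}/\tilde{\mathbf{p}}_{0}^{(N,\epsilon)}$, and in each ratio the factor $\tilde{\mathbf{p}}_{N}^{(N,\epsilon)}$ cancels. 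Substituting Lemma~\ref{lem:pi_Ai} in the numerator (index $i$) and at $i=0$ in the denominator, the common factors $\tfrac{1}{N+2}$ and $1-\exp(-(N+2)\epsilon)$ cancel, leaving
\[
\frac{\tilde{\mathbf{p}}_{i}^{(N,\epsilon)}}{\tilde{\mathbf{p}}_{0}^{(N,\epsilon)}} = \frac{1-\exp(-2\epsilon)}{2A_{0}^{(N,\epsilon)}}\,\frac{(i+2)A_{i}^{(N,\epsilon)}}{1-\exp(-(i+2)\epsilon)}.
\]
The $i=0$ term of this sum equals $1$, so isolating it yields $1/\tilde{\mathbf{p}}_{0}^{(N,\epsilon)} = 1 + \tfrac{1-\exp(-2\epsilon)}{2A_{0}^{(N,\epsilon)}}\sum_{i=1}^{N} \tfrac{(i+2)A_{i}^{(N,\epsilon)}}{1-\exp(-(i+2)\epsilon)}$. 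Feeding this back into the identity from Lemma~\ref{lem:accelerated_2_cgp}, the constant $1$ cancels against the $-1$, and the announced formula follows.

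There is no serious analytical obstacle here: once the accelerated chain is available, the argument is essentially bookkeeping. The only points that require care are (i) verifying that $\widetilde{T}_{\square}^{(N,\epsilon)}$ is the \emph{ordinary} first-return time, so that the $1/\tilde{\mathbf{p}}_{0}^{(N,\epsilon)}$ identity is legitimate (this is precisely what the acceleration buys, and the reason the non-accelerated $\hat{T}_{\square}^{(N,\epsilon)}$ needed the correction of Lemma~\ref{lem:accelerated_2_cgp}), and (ii) carefully tracking the cancellation of $\tilde{\mathbf{p}}_{N}^{(N,\epsilon)}$ and of the $(N+2)$-dependent prefactors when forming the ratios $\tilde{\mathbf{p}}_{i}^{(N,\epsilon)}/\tilde{\mathbf{p}}_{0}^{(N,\epsilon)}$.
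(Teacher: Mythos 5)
Your proposal is correct and follows essentially the same route as the paper: Lemma~\ref{lem:accelerated_2_cgp} to pass to the accelerated chain, the mean-recurrence identity $\bE_{(0,0)}[\widetilde{T}_{\square}^{(N,\epsilon)}] = 1/\tilde{\mathbf{p}}_{0}^{(N,\epsilon)}$, and Lemma~\ref{lem:pi_Ai} together with the normalisation of the invariant distribution. The only (cosmetic) difference is that you form the ratios $\tilde{\mathbf{p}}_{i}^{(N,\epsilon)}/\tilde{\mathbf{p}}_{0}^{(N,\epsilon)}$ so that $\tilde{\mathbf{p}}_{N}^{(N,\epsilon)}$ cancels at once, whereas the paper first solves for $\tilde{\mathbf{p}}_{N}^{(N,\epsilon)}$ explicitly and then computes $\tilde{\mathbf{p}}_{0}^{(N,\epsilon)}$ — the same algebra in a slightly different order.
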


\begin{proof}
We know that
\[\bE_{(0,0)} \left[
\widetilde{T}_{\square}^{(N,\epsilon)} \right] = \frac{1}{\tilde{\mathbf{p}}_{0}^{(N,\epsilon)}} \qquad \text{and}\qquad \sum_{i = 0}^{N} \tilde{\mathbf{p}}_{i}^{(N,\epsilon)} = 1.
\]
Using Lemma~\ref{lem:pi_Ai}, we obtain that
\[
\tilde{\mathbf{p}}_{N}^{(N,\epsilon)} \frac{1-\exp(-(N+2)\epsilon)}{N+2} \left[
\sum_{i = 0}^{N} \frac{(i+2)A_{i}^{(N,\epsilon)}}{1-\exp(-(i+2)\epsilon)}
\right] = 1, \]
and so
\[\tilde{\mathbf{p}}_{N}^{(N,\epsilon)} = \frac{N+2}{1-\exp(-(N+2)\epsilon)} \, \left(\sum_{i = 0}^{N}
\frac{(i+2)A_{i}^{(N,\epsilon)}}{1-\exp(-(i+2)\epsilon)}\right)^{-1}.
\]
Using again Lemma \ref{lem:pi_Ai} yields
\begin{align*}
\tilde{\mathbf{p}}_{0}^{(N,\epsilon)} &= 2A_{0}^{(N,\epsilon)} \frac{1}{1-\exp(-2\epsilon)} \, \left(\sum_{i = 0}^{N}
\frac{(i+2)A_{i}^{(N,\epsilon)}}{1-\exp(-(i+2)\epsilon)}\right)^{-1} \\
&= \frac{1}{1 + \left(
\sum_{i = 1}^{N}
\frac{(i+2)A_{i}^{(N,\epsilon)}}{1-\exp(-(i+2)\epsilon)}
\right) \frac{1-\exp(-2\epsilon)}{2A_{0}^{(N,\epsilon)}}}
\end{align*}
by noticing that the term corresponding to $i = 0$ in the sum is equal to $2A_{0}^{(N,\epsilon)}(1-\exp(-2\epsilon))^{-1}$,
and using Lemma~\ref{lem:accelerated_2_cgp}, we obtain
\begin{align*}
\bE_{(0,0)} \left[
\hat{T}_{\square}^{(N,\epsilon)}\right] &= \bE_{(0,0)} \left[
\widetilde{T}_{\square}^{(N,\epsilon)}
\right] - 1 + \frac{1}{1-\exp(-2\epsilon)} \\
&= \frac{1}{\tilde{\mathbf{p}}_{0}^{(N,\epsilon)}}
- 1 + \frac{1}{1-\exp(-2\epsilon)} \\
&= 1 + \frac{1-\exp(-2\epsilon)}{2A_{0}^{(N,\epsilon)}} \left(
\sum_{i = 1}^{N} \frac{(i+2)A_{i}^{(N,\epsilon)}}{1-\exp(-(i+2)\epsilon)}
\right) - 1 + \frac{1}{1-\exp(-2\epsilon)} \\
&= \frac{1}{1-\exp(-2\epsilon)} + \frac{1-\exp(-2\epsilon)}{2A_{0}^{(N,\epsilon)}} \left(
\sum_{i = 1}^{N} \frac{(i+2)A_{i}^{(N,\epsilon)}}{1-\exp(-(i+2)\epsilon)}
\right).
\end{align*}
\end{proof}

We can now use Proposition~\ref{prop:cvg_expectation} to prove Theorem~\ref{theorem:expectationapproxA}.

\begin{proof}[Proof of Theorem~\ref{theorem:expectationapproxA}]
By Proposition~\ref{prop:cvg_expectation},
\begin{equation*}
\bE_{(0,0)} [T_{\square}] = \lim_{\substack{\epsilon \to 0 \\ N \to + \infty \\ N^{2}\epsilon \to 0}}
\epsilon \bE_{(0,0)} \left[
\hat{T}_{\square}^{(N,\epsilon)}
\right].
\end{equation*}
Moreover, by Proposition~\ref{prop:explicit_formula_expectation},
\begin{equation*}
\epsilon \bE_{(0,0)} \left[
\hat{T}_{\square}^{(N,\epsilon)}
\right] = \frac{\epsilon}{1-\exp(-2\epsilon)} + \frac{\epsilon(1-\exp(-2\epsilon))}{2A_{0}^{(N,\epsilon)}} \left(
\sum_{i = 1}^{N} \frac{(i+2)A_{i}^{(N,\epsilon)}}{1-\exp(-(i+2)\epsilon)}
\right).
\end{equation*}
By 
\begin{equation*}
\frac{\epsilon}{1-\exp(-2\epsilon)} \xrightarrow[\epsilon \to 0]{} \frac{1}{2}
\end{equation*}
the asserted statement thus follows. 
\end{proof}

\section{Simulation study of the growth dynamics of the \texorpdfstring{$\infty$}{}-parent SLFV process}\label{sec:num_scheme}
In the last part of the article, we aim at studying the joint effect on the growth dynamics of the three different sources of randomness in the reproduction dynamics described earlier (stochasticity in shapes, timings and locations of reproduction events), by means of a simulation study. We start by a quick description of our simulation setting, and will describe our results in the next section. 

\subsection{Presentation of the simulation scheme}
In order to simulate the $\infty$-parent SLFV process with initial condition~$\hcal$, we consider the following approximation of the $\infty$-parent SLFV process as a discretised process on the compact set 
\begin{equation*}
\rcal_{W,H}\big((W/2,0)\big)    
\end{equation*}
with height $H$ and width $W$
to which we restrict the simulated process.

Let $\delta > 0$ be the spatial discretisation step.
Let $m \geq \max(w_{\max},h_{\max})/2$, where we recall that $w_{\max}$ and $h_{\max}$ are such that the support of~$\mu$ is included in $[0,w_{\max}] \times [0,h_{\max}]$.
We consider the discretised grid
\begin{equation*}
\gcal^{(\delta)} := \left\{
(i,j) \in \delta \zmath : -m\leq i \leq W + m \text{ and } -m - H/2 \leq j \leq m + H/2
\right\}, 
\end{equation*}
 We assume that $m$, $W$, $H$ and $\delta$ have been chosen in such a way that $m/\delta$, $W/\delta$ and $H/2\delta$ are integers. Each site $(i,j) \in \gcal^{(\delta)}$ is associated to a cell
\begin{equation*}
\ccal^{(\delta)}(i,j) := \rcal_{\delta,\delta}\big(
(i,j)
\big) = [i - \delta/2, i+\delta/2] \times [j - \delta/2, j + \delta/2]
\end{equation*}
corresponding to the rectangle with centre~$(i,j)$ and shape parameters~$(w,h) = (\delta,\delta)$. 

Observe that in the $\infty$-parent SLFV process driven by a Poisson point process with intensity 
\begin{equation*}
C dt \otimes dz \otimes \mu(dw,dh), 
\end{equation*}
as $\mu$ is a probability measure, centres of reproduction events fall in $[-m,W+m] \times [-m-H/2,H/2+m]$ at rate
\begin{equation}\label{eqn:relation_tau_C}
\theta = C(W+2m)(H+2m). 
\end{equation}
Moreover, each such centre is then distributed uniformly at random in $[-m,W+m] \times [-m-H/2,H/2+m]$. Therefore, we simulate the process as follows. We define our numerical approximation $(\xi_{t}^{(\delta)})_{t \geq 0}$ of the $\infty$-parent SLFV process started from~$\hcal$ as a $\{0,1\}^{\gcal^{(\delta)}}$--valued process with initial condition $\xi_{0}^{(\delta)}$ such that
\begin{equation*}
\forall (i,j) \in \gcal^{(\delta)}, \xi_{0}^{(\delta)}(i,j) = 1 \Longleftrightarrow i \leq 0. 
\end{equation*}
Then, let $\Pi^{(\mathrm{discr})}$ be a Poisson point process on $\rmath$ with intensity~$\theta$ (see Eq.\eqref{eqn:relation_tau_C}). For each $t \in \Pi^{(\mathrm{discr})}$, we sample:
\begin{itemize}
\item shape parameters~$(w^{(t)},h^{(t)})$ according to~$\mu$, 
\item a site~$(i^{(t)},j^{(t)})$ uniformly at random in~$\gcal^{(\delta)}$.
\end{itemize}
We obtain a reproduction event~$(t,(i^{(t)},j^{(t)}),w^{(t)},h^{(t)})$. The dynamics of the simulated process is then driven by~$\Pi^{(\mathrm{discr})}$ as follows. We wait until the first instant~$t \in \Pi^{(\mathrm{discr})} \cap (0,+\infty)$. If
\begin{equation*}
\sum_{(i',j') \in \gcal^{(\delta)} \cap \rcal_{w^{(t)},h^{(t)}}\big(
i^{(t)},j^{(t)}
\big)} \xi_{t-}^{(\delta)} (i',j') \geq 1, 
\end{equation*}
then for all $(i',j') \in \gcal^{(\delta)} \cap R_{w^{(t)},h^{(t)}}\big(i^{(t)},j^{(t)}\big)$, we set $\xi_{t}^{(\delta)} (i',j') = 1$, while we leave unchanged the values taken by~$\xi^{(\delta)}$ out of the affected area. We do nothing if the above sum is equal to zero. We repeat this step for each instant $t > 0$ in the Poisson point process $\Pi^{(\mathrm{discr})}$, and leave the process unchanged between jumps. 

We now define the equivalents of~$\tau\!_{x}$ and $\sigma\!_{x}$ for the discretised process. For all $i \in \delta\zmath \times (0,W+w_{\mathrm{max}}/2]$, we set
\begin{align*}
\tau\!_{i}^{(\mathrm{discr})} &:= \min\left\{
t \geq 0 : \xi_{t}^{(\delta)}(i,0) = 1
\right\} \\
\text{and } \sigma\!_{i}^{(\mathrm{discr})} &:= \min\left\{
t \geq 0 : \forall i' \in \delta\zmath \cap (0,i], \xi_{t}^{(\delta)}(i',0) = 1
\right\}. 
\end{align*}

We conclude the presentation of the simulation scheme by a list of the parameter values taken as constant across all simulations, which can be found in Table~\ref{tab:list_param}. 

\begin{table}[h]
\caption{List and values of the parameters taken as constant across the simulation study. See Eq.~\eqref{eqn:relation_tau_C} for the definition of~$\theta$.}\label{tab:list_param}
\footnotesize
		
\begin{tabular}{|c|l|c|}
\hline
\cellcolor{gray!25} \textbf{Parameter} & \multicolumn{1}{|c|}{\cellcolor{gray!25} \textbf{Interpretation}} & \cellcolor{gray!25} \textbf{Value} \\
\hline
$W$,$H$ & Width and height of the rectangle in & 60 \\
& which simulations were performed  & \\
\hline
$\delta$ & Spatial discretisation step & 1/200 \\
\hline
$m$ & Upper bound on the shape parameters & 3.2 \\
& of reproduction events & \\
\hline
$\theta$ & Intensity of the Poisson & $3600$ \\
& point process $\Pi^{(\mathrm{discr})}$ & \\
\hline
\end{tabular}
\end{table}

\subsection{Presentation of the simulation study}
\subsubsection{Settings considered}
Throughout the simulation study, we considered three main settings, corresponding to three types of distributions for the shape parameters of reproduction events.\\
\textsc{Setting 1 - Square reproduction events} In our first setting, we considered that all reproduction events are squares with constant shape parameters~$(a,a)$ for $a = 0.2$. In other words, we considered that the measure~$\mu$ was equal to  
\begin{equation*}
\mu^{(1)}(dw,dh) = \delta_{a}(dw) \otimes \delta_{a}(dh)
\end{equation*}
with $a = 0.2$. Note that in this setting, by Eq.\eqref{eqn:relation_tau_C} and given the parameter values listed in Table~\ref{tab:list_param}, 
\begin{equation*}
\gamma^{(\mathrm{determ})} = \gamma^{(\mathrm{lb,sto})} = \frac{a^{3}C}{2} = \frac{\theta}{2(W+2m)(H+2m)} \times a^{3} \approx 3.27 \times 10^{-3}. 
\end{equation*}\\
\hspace{1em}
\textsc{Setting 2 - "Rare extreme" reproduction events} For the second setting, we added stochasticity in shapes and considered that while most events are rectangles with shape parameters~$(a/2,a)$ (for $a = 0.2$ as before, though reproduction events are now rectangles rather than squares), some infrequent "extreme" reproduction events with shape $(na,a)$ for some fixed $n \in \nmath\backslash \{0\}$ could also occur. These larger events are elongated in the expansion direction, and can lead to a significant advance of the front if they occur near the front edge. In all that follows, we will refer to them as \textit{large extreme events}, even if they arguably are not that large nor particularly infrequent for small values of~$n$. Therefore, in this setting, we considered distributions of shape parameters of the form
\begin{equation*}
\mu^{(n)} = \left(
p_{n} \delta_{a/2}(dw) + (1-p_{n})\delta_{na}(dw)
\right) \otimes \delta_{a}(dh)
\end{equation*}
for $n \in \llbracket 2,7 \rrbracket$ and with $p_{n} = (2n-2)/(2n-1)$. This choice for~$p_{n}$ ensures that for all $n \in \nmath \backslash \{0\}$, 
\begin{align*}
\gamma^{\mathrm{(determ)}} &= \frac{C}{2} \times \left(
\frac{ap_{n}}{2} + na(1-p_{n})
\right) \times \left(
\frac{a^{2}p_{n}}{2} + na^{2}(1-p_{n})
\right) \\
&= \frac{C}{2} \times \left(
\frac{a(n-1)}{2n-1} + \frac{na}{2n-1}
\right) \times \left(
\frac{a^{2}(n-1)}{2n-1} + \frac{na^{2}}{2n-1}
\right) \\
&= \frac{Ca^{3}}{2}.
\end{align*}
In other words, $\gamma^{\mathrm{(determ)}}$ does not depend on the value of~$n$, and the expansion speed of a "deterministic" version of the process is uniform across all distributions~$\mu^{(n)}$, $n \in \nmath \backslash \{0\}$. This ensures that we can compare the actual measured expansion speeds of the simulated processes to assess the effect of stochasticity in shapes on the expansion dynamics. 

While $\gamma^{\mathrm{(determ)}}$ is independent of the value of~$n$, note that we also have for all~$n \in \nmath \backslash \{0\}$, 
\begin{align*}
\gamma^{\mathrm{(lb,sto)}} &= \frac{C}{2} \times \left( 
\frac{a^{3}}{4}p_{n} + n^{2} a^{3} (1-p_{n})
\right) \\
&= \gamma^{\mathrm{(determ)}} \times \left(
\frac{n-1+2n^{2}}{2(2n-1)}
\right) \\
&= \gamma^{\mathrm{(determ)}} \times \frac{n+1}{2}.
\end{align*}
\hspace{1em}\\
\textsc{Setting 3 - Mixtures of distributions}
In our third setting, we considered mixtures of the distributions considered as part of Setting~$2$, that is, distributions of shape parameters of the form
\begin{equation*}
    \mu^{\mathrm{(mixt)}} = \sum_{i = 1}^{7} \tilde{p}_{i} \mu^{(i)}. 
\end{equation*}
The probabilities $(\tilde{p}_{i})_{1 \leq i \leq 7}$ were chosen in such a way that the deterministic lower bound
\begin{equation*}
\gamma^{(\mathrm{determ})} = \frac{aC}{2} \times \left(
\sum_{i = 1}^{7} \tilde{p}_{i} \left(
\frac{2i-2}{2i-1} \times \frac{a}{2} + \frac{1}{2i-1}\times ia
\right)
\right)^{2}
\end{equation*}
is constant and equal to~$a^{3}C/2$. We considered five different set of values for $(\tilde{p}_{i})_{1 \leq i \leq 7}$, listed in Table~\ref{tab:param_mixtures}. 
Notice that while the deterministic lower bound~$\gamma^{\mathrm{(determ)}}$ is constant across all distributions and equal to its value in Settings~$1$ and~$2$, the stochastic lower bound
\begin{align*}
\gamma^{(\mathrm{lb,sto})} &= \frac{aC}{2} \times \left(
\sum_{i = 1}^{7} \tilde{p}_{i} \left(
\frac{2i-2}{2i-1} \times \frac{a^{2}}{4} + \frac{1}{2i-1} \times a^{2}i^{2}
\right)
\right) \\
&= \gamma^{(\mathrm{determ})} \times \left(
\sum_{i = 1}^{7} \frac{\tilde{p}_{i}(i+1)}{2}
\right)
\end{align*}
is a linear combination of the stochastic lower bounds for each distribution~$\mu^{(i)}$. 

\begin{table}[h]
\caption{Weights $\omega_{1}$, ..., $\omega_{7}$ of each distribution~$\mu^{(1)}$, ..., $\mu^{(7)}$ (introduced in Setting~$2$) in each of the five distributions considered in Setting~$3$. The probabilities~$\tilde{p}_{1}$, ..., $\tilde{p}_{7}$ can be recovered from the weights~$\omega_{1}$, ..., $\omega_{7}$ using the relation $\tilde{p}_{i} = \omega_{i}/(\sum_{j = 1}^{7} \omega_{j})$.}\label{tab:param_mixtures}
\begin{tabular}{|l|c|c|c|c|c|c|c|}
\hline 
\cellcolor{gray!25} \textbf{Distributions} & \cellcolor{gray!25} \textbf{$\mu^{(1)}$} & \cellcolor{gray!25} \textbf{$\mu^{(2)}$} & \cellcolor{gray!25} \textbf{$\mu^{(3)}$} & \cellcolor{gray!25} \textbf{$\mu^{(4)}$} & \cellcolor{gray!25} \textbf{$\mu^{(5)}$} & \cellcolor{gray!25} \textbf{$\mu^{(6)}$} & \cellcolor{gray!25} \textbf{$\mu^{(7)}$}\\
\hline 
\cellcolor{gray!25} Mixture 1 & 1 & 2 & 3 & 4 & 5 & 6 & 7 
\\ 
\hline 
\cellcolor{gray!25} Mixture 2 & 1 & 2 & 2 & 2 & 2 & 1 & 1 
\\ 
\hline 
\cellcolor{gray!25} Mixture 3 & 1 & 2 & 0 & 0 & 2 & 1 & 1 
\\ 
\hline 
\cellcolor{gray!25} Mixture 4 & 1 & 0 & 0 & 0 & 0 & 0 & 1
\\ 
\hline 
\cellcolor{gray!25} Mixture 5 & 1 & 2 & 1 & 1 & 1 & 1 & 1 
\\ 
\hline 
\end{tabular}
\end{table}

\subsubsection{Limiting behaviour of~\texorpdfstring{$\tau\!_{x}$}{} and \texorpdfstring{$\sigma\!_{x}$}{}}
In order to study the limiting behaviour of~$\tau\!_{x}$ and~$\sigma\!_{x}$, we ran simulations of the $\infty$-parent SLFV process using the above parametrization, each time until the right-hand side barrier
\begin{equation*}
\left\{
(W+m,j) : j \in \delta \zmath \cap [-H/2-m,H/2+m]
\right\}
\end{equation*}
was reached. For each simulation, we recorded the value of~$\tau\!_{i}^{\mathrm{(discr)}}$ (\textit{resp.}, $\sigma\!_{i}^{\mathrm{(discr)}}$) for each value of~$i$ such that~$(i,0)$ (\textit{resp.}, the entire segment~$[(0,0),(i,0)]$) was occupied at the latest when the  right-hand side barrier was reached. We ran a total of~$1616$ simulations for Setting~$1$, and~$368$ simulations per distribution of shape parameters for Settings~$2$ and~$3$. 

For each distribution of shape parameters considered as part of the study, let~$i_{\max}^{\tau}$ be the largest index~$i \in \delta \zmath$ such that~$\tau\!_{i}^{\mathrm{(discr)}}$ was well-defined for each simulation. For each value of~$i$ in the set
\begin{equation*}
    [0,i_{\max}^{\tau}] \cap \{ 60\delta + 100\delta j : j \in \zmath\}, 
\end{equation*}
we computed the average of~$\tau_{i}^{\mathrm{(discr)}}$ over all simulations, and used it to approximate~$\esp[\tau\!_{i}^{\mathrm{(discr)}}]$. Then, we computed the average of~$(\tau\!_{i}^{\mathrm{(discr)}})^{2}$ in order to approximate the second moment of~$\tau\!_{i}^{\mathrm{(discr)}}$ and deduce an approximation of its variance. We proceeded similarly to approximate the moments of~$\sigma\!_{i}^{\mathrm{(discr)}}$, this time considering the largest index~$i_{\max}^{\sigma}$ such that~$\sigma\!_{i}^{\mathrm{(discr)}}$ was well-defined over all simulations, and computing averages for each value of~$i$ in the set
\begin{equation*}
    [0, i_{\max}^{\sigma}] \cap \{ 60 \delta + 100 \delta j : j \in \zmath\}. 
\end{equation*}

\subsubsection{Transverse fluctuations of the front location}
We then focused on the transverse fluctuations of the front location, in order to study how they scale with time compared to conjectured scalings for other stochastic growth models (as described in the introduction). In particular, we wanted to assess whether the scaling regime was in agreement with the one of processes belonging to the KPZ universality class (see \textit{e.g.}, \cite{takeuchi2018appetizer}), for which transverse fluctuations grow in~$\propto t^{1/3}$. 

\begin{figure}
    \centering
\begin{subfigure}[t]{0.40\textwidth}
\centering
\includegraphics[width=0.95\linewidth]{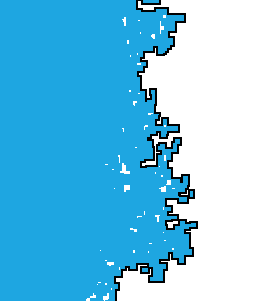}
\subcaption{Actual front profile}
\end{subfigure}
\begin{subfigure}[t]{0.40\textwidth}
\centering
\includegraphics[width=0.95\linewidth]{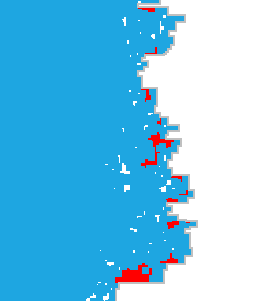}
\subcaption{Overhang-corrected front profile}
\end{subfigure}
    \caption{Illustration of the overhang-correction process on a snapshot of the $\infty$-parent SLFV process. The occupied area is the area in blue, while the area in white is the empty area. (a) The front profile is highlighted in black. (b) The overhang-corrected front profile is indicated in grey. In order to highlight the difference with the actual front profile, we filled in red the empty areas behind the overhang-corrected front profile, but in front of the actual front profile.}
    \label{fig:overhang_correction}
\end{figure}

To do so, we recorded the front profile at regular time steps and way from the border. More precisely, for each value of~$j$ in the set~$\llbracket 901, 12001 \rrbracket$ and at each time step, we recorded the index of the location the furthest away from the half-plane~$\hcal$ initially occupied reached by the process along the (discrete) axis
\begin{equation*}
\{ 
(\delta i', -m-H/2 + \delta j) : i' \in \zmath
\}. 
\end{equation*}
This procedure yields regular screenshots of the "overhang-corrected" front profile (see Figure~\ref{fig:overhang_correction}); which is widely used in the literature as a proxy of the front profile to study the scaling of the transverse fluctuations of the front in experimental stochastic growth models (see \textit{e.g.}, \cite{huergo2010morphology}). We ran a total of~$1616$ simulations for Setting~$1$, and $368$~simulations per distribution of shape parameters for Settings~$2$ and~$3$. We performed a preliminary study to check that restricting ourselves to~$j \in \llbracket 901, 12001 \rrbracket$ was sufficient to avoid border effects on front fluctuations. 

For each overhang-corrected front profile, we computed the standard deviation in the index of the location reached by the front. Then, at each time step, we computed the average standard deviation over all front profiles, and we considered the log-log plot of the average standard deviation as a function of time. In order to interpret the results, for each location, we also recorded the first time at which the front had moved away entirely from the border of the half-place~$\hcal$ (see Figure~\ref{fig:stages_expansion}), or in other words, such that for all~$j \in \llbracket 901, 12001 \rrbracket$, the location $(0,-m-H/2 + \delta j)$ was occupied. 

\begin{figure}
    \centering
\begin{subfigure}[t]{0.40\textwidth}
\centering
\includegraphics[width=0.40\linewidth]{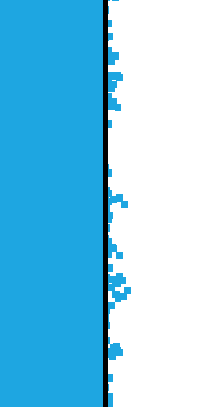}
\subcaption{First stage of the expansion}
\end{subfigure}
\begin{subfigure}[t]{0.40\textwidth}
\centering
\includegraphics[width=0.45\linewidth]{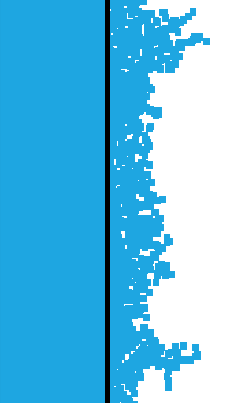}
\subcaption{Second stage of the expansion}
\end{subfigure}
    \caption{(a) On this first snapshot, corresponding to the first stage of the expansion, the front has not moved entirely yet from the border of the area initially occupied~$\mathcal{H}$, and is still entirely flat on some portions. (b) On this second snapshot, corresponding to the second stage of the expansion, the front has now moved away entirely from the border of~$\mathcal{H}$.}
    \label{fig:stages_expansion} 
\end{figure}

\section{Simulation results}\label{sec:results_simus}

\subsection{Limiting expansion speed}\label{subsec:expansion_speed}
The analysis of the simulation results show that we simulate the process long enough for~$\tau\!_{x} x^{-1}$ and~$\sigma\!_{x} x^{-1}$ to converge in expectation (see Figure~\ref{fig:illustr_cvg_taux} for an illustration). This convergence occurs quickly in all of the settings considered, though notably quicker in the absence of stochasticity in shape parameters or for~$\tau\!_{x} x^{-1}$. A summary of the results can be found in Table~\ref{tab:summary_expansion_speed}, while complete results can be found in~Supplementary Information~A. 

\begin{figure}
\centering
\includegraphics[width=0.70\linewidth]{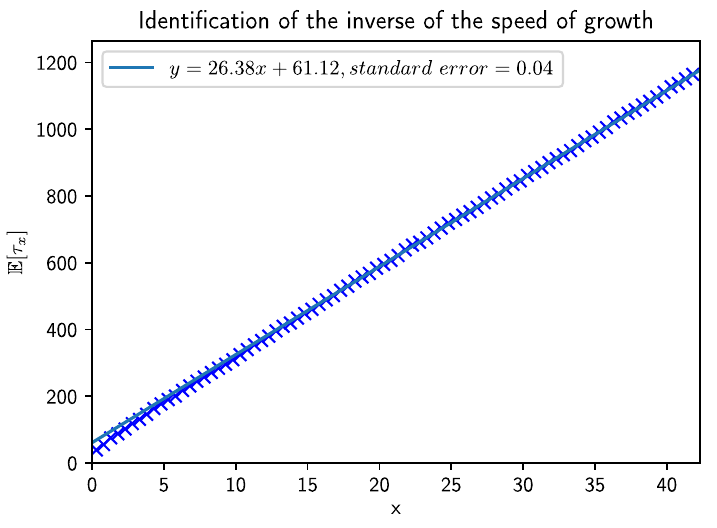}
\caption{Plot of $x \to \esp[\tau\!_{x}]$ (here for Setting~$3$-Mixture~$5$), and illustration of the convergence in expectation for $\tau\!_{x}x^{-1}$. The example taken here is one of the cases for which convergence was the slowest to occur. We refer the reader to Figure~\ref{fig:illustr_taux_minus_sigmax} for an indirect illustration of the convergence speed of $\sigma\!_{x}x^{-1}$. Complete results can be found in Supplementary Information~A.}
\label{fig:illustr_cvg_taux} 
\end{figure}

\begin{table}[h]
\caption{Approximate value of the limiting expansion speed~$\nu^{-1}$ for the different distributions of shape parameters considered in the study, and comparison to the corresponding stochastic lower bound~$\gamma^{\mathrm{(lb,sto)}}$ and deterministic lower bound~$\gamma^{\mathrm{(determ)}}$ (which is constant across all settings). See Supplementary Information~A for the regression plots. We recall that under our parametrisation, we have~$\gamma^{\mathrm{(determ)}} \approx 3.27 \times 10^{-3}$ in all settings.}\label{tab:summary_expansion_speed}
\begin{tabular}{|c|c|c|c|}
\hline
\cellcolor{gray!25} \textbf{Setting} & \cellcolor{gray!25} \textbf{Approximate limiting} & \cellcolor{gray!25} \textbf{Stochastic lower bound} & \cellcolor{gray!25} \textbf{Quotient} \\
\cellcolor{gray!25}  & \cellcolor{gray!25} \textbf{speed of growth $\nu^{-1}$} & \cellcolor{gray!25} \textbf{$\gamma^{\mathrm{(lb,sto)}}$} & \cellcolor{gray!25} \textbf{$\nu^{-1}/\gamma^{\mathrm{(determ)}}$} \\
\hline 

\multicolumn{1}{c}{} &
\multicolumn{3}{|c|}{\cellcolor{gray!25} \textbf{Setting~$1$}} \\
\hline
\cellcolor{gray!25} $n = 1$
& $1.25 \times 10^{-2}$ & $3.27 \times 10^{-3}$ & $3.82$ \\
\hline

\multicolumn{1}{c}{} &
\multicolumn{3}{|c|}{\cellcolor{gray!25} \textbf{Setting~$2$}} \\
\hline 
\cellcolor{gray!25} $n = 2$ & $2.03 \times 10^{-2}$ & $4.90 \times 10^{-3}$ & $5.31$ \\
\cellcolor{gray!25} $n = 3$ & $2.84 \times 10^{-2}$ & $6.53 \times 10^{-3}$ & $8.57$ \\
\cellcolor{gray!25} $n = 4$ & $3.71 \times 10^{-2}$ & $8.17 \times 10^{-3}$ & $11.35$ \\ 
\cellcolor{gray!25} $n = 5$ & $4.61 \times 10^{-2}$ & $9.80 \times 10^{-3}$ & $14.10$ \\
\cellcolor{gray!25} $n = 6$ & $5.39 \times 10^{-2}$ & $1.14 \times 10^{-2}$ & $16.48$ \\
\cellcolor{gray!25} $n = 7$ & $6.41 \times 10^{-2}$ & $1.31 \times 10^{-2}$ & $19.60$ \\
\hline

\multicolumn{1}{c}{} &
\multicolumn{3}{|c|}{\cellcolor{gray!25} \textbf{Setting~$3$}} \\
\hline
\cellcolor{gray!25} Mixture~$1$ & $4.73 \times 10^{-2}$ & $1.08 \times 10^{-2}$ & $14.46$ \\
\cellcolor{gray!25} Mixture~$2$ & $3.79 \times 10^{-2}$ & $1.10 \times 10^{-2}$ & $11.59$ \\
\cellcolor{gray!25} Mixture~$3$ & $4.00 \times 10^{-2}$ & $1.03 \times 10^{-2}$ & $12.23$ \\
\cellcolor{gray!25} Mixture~$4$ & $4.25 \times 10^{-2}$ & $8.18 \times 10^{-3}$ & $13.00$ \\
\cellcolor{gray!25} Mixture~$5$ & $3.79 \times 10^{-2}$ & $9.80 \times 10^{-3}$ & $11.59$ \\
\hline 
\end{tabular}
\end{table}
We observe that in all cases, the actual expansion speed is significantly larger than the deterministic and stochastic lower bounds $\gamma^{(\mathrm{determ})}$ and $\gamma^{(\mathrm{lb, sto})}$ derived earlier. \normalcolor Even in Setting~$1$, corresponding to an absence of stochasticity in shapes, the expansion is more than~$3.8$ times faster than the deterministic prediction not taking into account the spatial structure and the stochasticity in the locations of centres of reproduction events. Notice that  a similar increase was documented in \cite{louvetSPA} for elliptical-shaped reproduction events. In that case, the actual speed was almost~$2.5$ times faster than the deterministic lower bound, implying that the overall shape of reproduction events has a strong influence on the expansion dynamics, possibly due to their effect on the strength of spatial dependencies. 

Regarding the distributions of shape parameters considered as part of Setting~$2$, results show that the expansion speed increased linearly in~$n$ at a rate of~$8.7 \times 10^{-3}$ ($10^{3}\nu = 8.7n + 2.5$, $R^{2} = 0.999$). Notice that this rate of increase is significantly higher than the rate of~$\gamma^{\mathrm{(determ)}}/2 = 1.63 \times 10^{-3}$ obtained for the stochastic lower bound~$\gamma^{\mathrm{(lb,sto)}}$. 

For Setting~$3$, in each mixture of distributions considered, the distributions generating the largest extreme events had a disproportionately higher contribution to the overall expansion speed, but were not the only drivers of the expansion speed. In particular, the results allow us to rule out that the expansion speed can be obtained from the width of the widest possible reproduction events, or that the expansion speed is a linear combination of the corresponding speeds for each distribution. Apart from these observations, we did not identify any clear trend emerging from the results, regarding the relation between the distribution of shape parameters and the expansion speed. 

\begin{figure}[ht]
    \centering
    \includegraphics[width=0.70\linewidth]{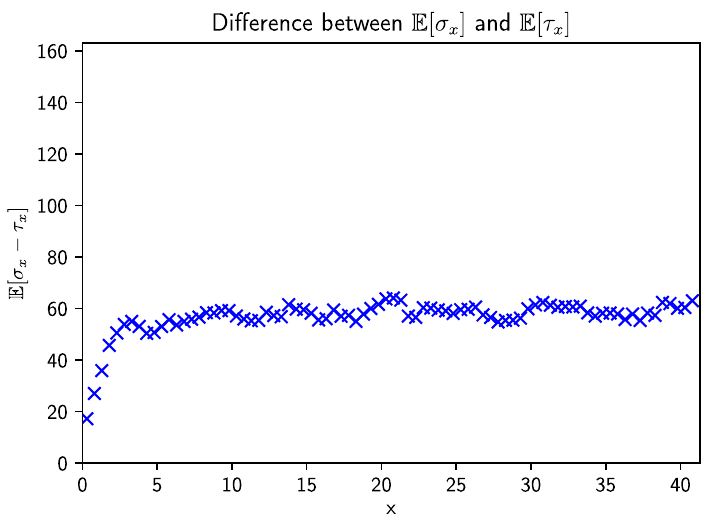}
    \caption{Plot of $x \to \esp[\sigma\!_{x} - \tau\!_{x}]$ for Setting~$3$ - Mixture~$5$. Complete results across all settings can be found in Supplementary Information~A.}
    \label{fig:illustr_taux_minus_sigmax}
\end{figure}

\begin{table}[h]
\caption{Scaling coefficients of the variance of $\tau\!_{x}$ and $\sigma\!_{x}$ as functions of~$x$, identified by a linear regression on the right-hand side of the log-log plots of the variance as a function of~$x$. See Supplementary Information~B for the regression plots.}\label{tab:summary_variance_scaling}
\begin{tabular}{|c|c|c|}
\hline
\cellcolor{gray!25} \textbf{Setting} & \multicolumn{2}{|c|}{\cellcolor{gray!25} \textbf{Scaling coefficient}} \\
\cellcolor{gray!25}  & \cellcolor{gray!25} $\mathrm{Var}(\tau\!_{x})$ & \cellcolor{gray!25} $\mathrm{Var}(\sigma\!_{x})$ \\
\hline
\multicolumn{1}{c}{} &
\multicolumn{2}{|c|}{\cellcolor{gray!25} \textbf{Setting~$1$}} \\
\hline
\cellcolor{gray!25} $n=1$
& 0.53 & 0.49 \\
\hline 
\multicolumn{1}{c}{} &
\multicolumn{2}{|c|}{\cellcolor{gray!25} \textbf{Setting~$2$}} \\
\hline 
\cellcolor{gray!25} $n = 2$ & 0.45 & 0.35 \\
\cellcolor{gray!25} $n = 3$ & 0.50 & 0.39 \\
\cellcolor{gray!25} $n = 4$ & 0.47 & 0.46 \\
\cellcolor{gray!25} $n = 5$ & 0.50 & 0.28 \\
\cellcolor{gray!25} $n = 6$ & 0.36 & 0.21 \\
\cellcolor{gray!25} $n = 7$ & 0.38 & 0.20 \\
\hline
\multicolumn{1}{c}{} &
\multicolumn{2}{|c|}{\cellcolor{gray!25} \textbf{Setting~$3$}} \\
\hline
\cellcolor{gray!25} Mixture~$1$ & 0.41 & 0.22 \\
\cellcolor{gray!25} Mixture~$2$ & 0.57 & 0.44 \\
\cellcolor{gray!25} Mixture~$3$ & 0.50 & 0.37 \\
\cellcolor{gray!25} Mixture~$4$ & 0.56 & 0.58 \\
\cellcolor{gray!25} Mixture~$5$ & 0.62 & 0.53 \\
\hline 
\end{tabular}
\end{table}

The results regarding the long-term behaviour of~$\esp[\sigma\!_{x} - \tau\!_{x}]$ strongly suggest that this difference does not scale with~$x$. Instead, it seems to converge to a constant depending on the distribution of shape parameters, in all of the three settings considered. 
See Figure~\ref{fig:illustr_taux_minus_sigmax} for an illustration in the case Setting~$3$~-~Mixture~$5$, and see Supplementary Information~A for complete results.
This observation suggests that the sub-linear upper bound on~$\sigma\!_{x} - \tau\!_{x}$ obtained in \cite{louvetArxiv} could be improved significantly. 

\subsection{Scaling of the variance of \texorpdfstring{$\mathrm{Var}(\tau\!_{x})$}{} and \texorpdfstring{$\mathrm{Var}(\sigma\!_{x})$}{}}\label{subsec:varsigmatau}
The log-log plots of the variance of~$\tau\!_{x}$ and $\sigma\!_{x}$ as functions of~$x$ suggest the existence of a relation of the form 
\begin{equation*}
\mathrm{Var}(\tau\!_{x}) \propto x^{a} \text{ and } \mathrm{Var}(\sigma\!_{x}) \propto x^{b} \text{ when } x \to + \infty, 
\end{equation*}
with $a \neq b$ depending on the distribution of shape parameters. A summary of the results can be found in Table~\ref{tab:summary_variance_scaling}, while complete results can be found in Supplementary Information~B. An illustration in the case of Setting~$1$ can be found in Figure~\ref{fig:illustr_scaling_variance}. 

\begin{figure}[ht]
\centering
\begin{subfigure}[b]{\textwidth}
\centering
\includegraphics[width=0.70\linewidth]{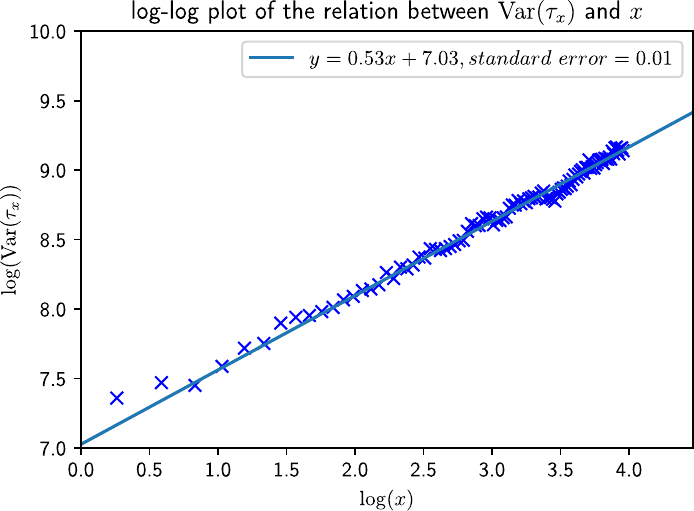}
\subcaption{Log-log plot of $\mathrm{Var}(\tau\!_{x})$ as a function of~$x$ for Setting~$1$, and identification of the scaling exponent.}
\end{subfigure}
\begin{subfigure}[b]{\textwidth}
\centering
\includegraphics[width=0.70\linewidth]{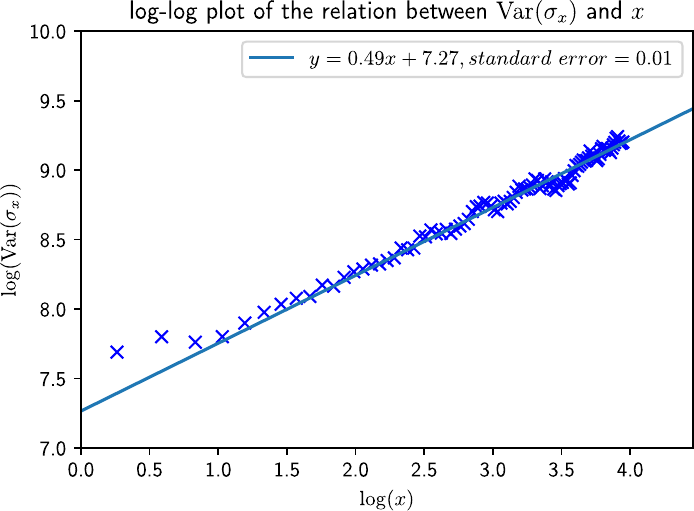}
\subcaption{Log-log plot of $\mathrm{Var}(\sigma\!_{x})$ as a function of~$x$ for Setting~$1$, and identification of the scaling exponent.}
\end{subfigure}
\vspace{2mm}
\caption{Identification of the scaling exponent for the variance of~$\tau\!_{x}$ and $\sigma\!_{x}$, when shape parameters are constant and equal to~$(a,a) = (0.2, 0.2)$ (Setting~$1$). }
\label{fig:illustr_scaling_variance}
\end{figure}

\FloatBarrier

Contrary to the case of the expectation of~$\tau\!_{x}$ and~$\sigma\!_{x}$, the convergence of~$\mathrm{Var}(\tau\!_{x})$ and~$\mathrm{Var}(\sigma\!_{x})$ takes significantly longer to occur. This is particularly the case for~$\mathrm{Var}(\sigma\!_{x})$ and when reproduction events with large widths can occur (in the latter case, it is not even clear that convergence has occurred before the end of the simulations). Our results are mostly interpretable in the case of Setting~$1$, for which $5$~times more simulations were generated than for other distributions of shape parameters, and point towards the scaling exponent for $\mathrm{Var}(\tau\!_{x})$ being close but higher than the one for~$\mathrm{Var}(\sigma\!_{x})$

While no clear trend seem to emerge from the simulation results regarding how the scaling coefficients depend on the distribution of shape parameters, we can still make the following observations. First, the results suggest that the scaling coefficient for~$\mathrm{Var}(\tau\!_{x})$ is different and higher than the one for~$\mathrm{Var}(\sigma\!_{x})$. Higher values of the scaling coefficient tend to be reached when the stochasticity in shape parameters is the highest. Moreover, in the absence of extremely wide reproduction events (for which convergence may not have occurred yet at the end of the simulations), the values of the scaling coefficient for~$\mathrm{Var}(\tau\!_{x})$ appear to be close and around~$0.47$. 

\FloatBarrier

\subsection{Transverse fluctuations of the front location}\label{subsec:transversefrontfluc}

\begin{figure}[ht]
    \centering
    \includegraphics[width=0.70\linewidth]{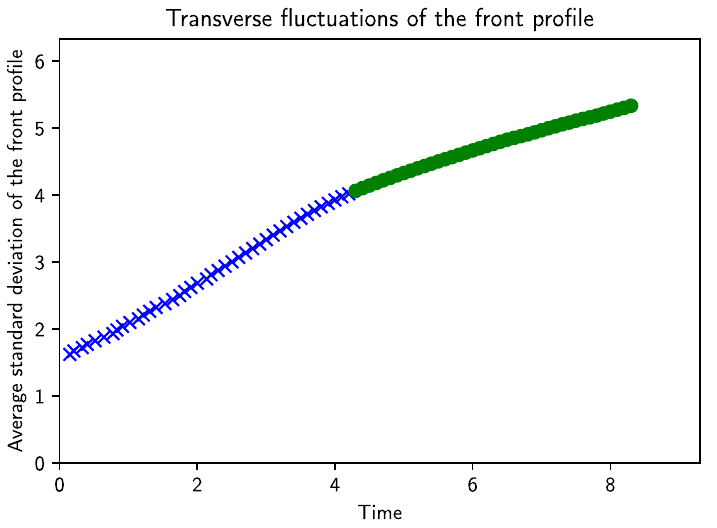}
    \caption{Illustration of the two-stages dynamics for the transverse fluctuations of the front location. In the first stage (blue crosses), the front is still partly located along the border of the half-plane~$\hcal$ initially occupied (see Figure~\ref{fig:stages_expansion}~(a)), while in the second stage (green circles), the front has moved away entirely from its initial location (see Figure~\ref{fig:stages_expansion}~(b)). Simulations performed with constant shape parameters equal to~$(a,a) = (0.2,0.2)$ (Setting~$1$). Corresponding plots for other settings, as well as more detailed plots for Setting~$1$, can be found in Supplementary Information~C.}
    \label{fig:two_stage_dynamic}
\end{figure}

The analysis of the log-log plots of the standard deviation of the transverse fluctuations of the front location show that we can clearly distinguish two distinct scaling regimes, illustrated on Figure~\ref{fig:two_stage_dynamic}. The first one, corresponding to the early dynamics of the process (see blue crosses on Figure~\ref{fig:two_stage_dynamic}), when some regions of the border are still unaffected by reproduction events (as previously illustrated on Figure~\ref{fig:stages_expansion}~(a)), exhibits fluctuations growing as~$\propto t^{\beta_{1}}$ with $\beta_{1} \approx 0.61-0.63$, while the second one (see green circles on Figure~\ref{fig:two_stage_dynamic} and Figure~\ref{fig:stages_expansion}~(b)) exhibits fluctuations growing as~$\propto t^{\beta_{2}}$ with~$\beta_{2}$ around~$1/3$. Complete results can be found in Supplementary Information~C, while a summary of the results can be found in Table~\ref{tab:summary_transverse_fluctuations}. 

Results show that the scaling exponents in each of the two stages of the expansion are remarkably close across all settings, suggesting that the front features (or at least the ones of the overhang-corrected front) are at least partly independent of the microscopic reproduction dynamics. 
Regarding the second stage of the expansion, the scaling exponent is close (but generally different) to the one of~$1/3$ characterizing transverse fluctuations in stochastic growth models belonging to the KPZ universality class (see \textit{e.g.}, \cite{takeuchi2018appetizer,huergo2010morphology}). While the decomposition into two stages is not expected for the KPZ universality class, the fact that we observe it might be a combination of our choice of initial condition combined with our very fine temporal scale at the very beginning.

\begin{table}[h]
\caption{Identification of the scaling exponent for transverse fluctuations of the front location before (first stage) and after (second stage) the front has moved away entirely from its initial location, obtained performing a linear regression on the corresponding part of the log-log plot (see Supplementary Information~C for an illustration). }\label{tab:summary_transverse_fluctuations}
\begin{tabular}{|c|c|c|}
\hline
\cellcolor{gray!25} \textbf{Setting} & \multicolumn{2}{c|}{\cellcolor{gray!25} \textbf{Scaling coefficient}} \\
\cellcolor{gray!25}   & \cellcolor{gray!25} \textbf{First stage} & \cellcolor{gray!25} \textbf{Second stage} \\
\hline
\multicolumn{1}{c}{~} &
\multicolumn{2}{|c|}{\cellcolor{gray!25} \textbf{Setting~$1$}} \\
\hline
\cellcolor{gray!25} $n = 1$
& 0.61 & 0.31\\
\hline
\multicolumn{1}{c}{~} &
\multicolumn{2}{|c|}{\cellcolor{gray!25} \textbf{Setting~$2$}} \\
\hline 
\cellcolor{gray!25} $n = 2$ & 0.62 & 0.33 \\
\cellcolor{gray!25} $n = 3$ & 0.62 & 0.34 \\
\cellcolor{gray!25} $n = 4$ & 0.62 & 0.34 \\
\cellcolor{gray!25} $n = 5$ & 0.62 & 0.35 \\
\cellcolor{gray!25} $n = 6$ & 0.63 & 0.34 \\
\cellcolor{gray!25} $n = 7$ & 0.63 & 0.35 \\
\hline
\multicolumn{1}{c}{~} &
\multicolumn{2}{|c|}{\cellcolor{gray!25} \textbf{Setting~$3$}} \\
\hline
\cellcolor{gray!25} Mixture~$1$ & 0.63 & 0.34 \\
\cellcolor{gray!25} Mixture~$2$ & 0.62 & 0.35 \\
\cellcolor{gray!25} Mixture~$3$ & 0.63 & 0.35 \\
\cellcolor{gray!25} Mixture~$4$ & 0.63 & 0.35 \\
\cellcolor{gray!25} Mixture~$5$ & 0.63 & 0.35 \\
\hline 
\end{tabular}
\end{table}

\FloatBarrier

\subsection{Discussion}\label{subsec:discussion}
We conclude our numerical study with a description of the general implications of our results. Our main result is the fact that as illustrated in Table~\ref{tab:summary_expansion_speed}, accounting for stochasticity in shapes is not sufficient to predict the expansion speed of the $\infty$-parent SLFV, and neglecting stochasticity in locations and timings of reproduction events leads to a highly suboptimal lower bound on the actual expansion speed. This confirms previous results presented in~\cite{louvetSPA}, and gives further evidence of the strong influence of "spikes" (see Figure~\ref{fig:spikes}) on the growth dynamics, in line with the analysis of the $2$-CGP performed in Section~\ref{sec:toy_model}. Compared to~\cite{louvetSPA} however, our results also show that stochasticity in shapes do play a part in the expansion speed, and that not taking it into account would similarly lead to a suboptimal lower bound. 

While the study of the scaling of the variance of $\tau_{x}$ and $\sigma_{x}$ as functions of~$x$ did not allow to identify a clear scaling exponent or a relation with the distribution of shape parameters, it does suggest that the scaling coefficient is significantly lower than the one of~$2/3$ expected for first-passage percolation on~$\mathbb{Z}^{2}$ \cite{auffinger201750}. Therefore, while the~$\infty$-parent SLFV process is very close to models already studied in first-passage percolation theory, our results suggest that it might actually have very different and distinctive growth properties. This observation calls for a more in-depth theoretical study of the growth properties of the~$\infty$-parent SLFV process. 

Regarding the scaling of the transverse fluctuations of the front edge, our results point towards the front interface of the $\infty$-parent SLFV process belonging to the KPZ universality class, which contains many standard stochastic growth processes, as presented earlier in the introduction. This has potential important implications for the validity of the $\infty$-parent SLFV process as a model for spatially expanding populations, of which many are expected to belong to this universality class~\cite{takeuchi2018appetizer}. However, we want to further emphasize that our results are by no means a definitive proof that our process does belong to the KPZ universality class, the two main obstacles to this conclusion being that 1)~we only considered one scaling exponent, while the KPZ universality class is characterized by two scaling exponents, which both need to be considered to conclude (see \textit{e.g.}, \cite{huergo2010morphology}), and that 2) the two-stages expansion dynamics identified, which might be a by-product of our choice of temporal scale and initial condition, is not predicted by the KPZ universality class. Therefore, our results call for a further analysis of the front fluctuations of the $\infty$-parent SLFV process, for which the simulation developed as part of this study could be used.

\backmatter

\bmhead{Supplementary information}

The detailed simulation results are available in the Supplementary Information file : \href{https://doi.org/10.5281/zenodo.16368905}{https://doi.org/10.5281/zenodo.16368905}.

\bmhead{Acknowledgements}

We thank the {\it Allianz f\"ur Hochleistungsrechnen Rheinland-Pfalz} for granting us access to the High Performance Computing Cluster \textsc{Elwetritsch}, on which the simulations underlying our figures have been performed. 
AL is grateful to Amandine V{\'e}ber for insightful discussions on the design and study of the toy model for stochastic dynamics at the front edge. AL acknowledges support from the TUM Global Postdoc Fellowship program and partial support from the chair program "Mathematical Modelling and Biodiversity" of Veolia Environment-Ecole Polytechnique-National Museum of Natural History-Foundation X. This project was initiated during the "Branching systems, reaction-diffusion equations and population models" workshop at the CRM, Universit{\'e} de Montr{\'e}al. 
JLI is grateful to Matthias Birkner for inspiring discussions regarding the simulation schemes and to Anton Wakolbinger for starting the initial conversation resulting into this paper as well as his ongoing encouragement.

\section*{Declarations}

\paragraph{Funding}
AL acknowledges support from the TUM Global Postdoc Fellowship program and partial support from the chair program "Mathematical Modelling and Biodiversity" of Veolia Environment-Ecole Polytechnique-National Museum of Natural History-Foundation X.

\paragraph{Conflict of interest}
The authors declare that they have no conflict of interest.

\paragraph{Consent for publication}
All the authors consent for publication. 

\paragraph{Data availability}
The simulated data generated as part of this study is available on the following online repository: \href{https://doi.org/10.5281/zenodo.15584523}{https://doi.org/10.5281/zenodo.15584523}.

\paragraph{Code availability}
The code used as part of this study is available upon request. 

\paragraph{Author contribution}
Conceptualization: JLI and AL, Data curation: JLI, Formal analysis: JLI and AL, 
Methodology: JLI and AL, Software: JLI, Validation: AL, Visualization: JLI and AL, Writing – original draft: AL, Writing – review and editing: JLI and AL.

\bibliography{bibliography}

\end{document}